\newcommand{\ntext}{}
\newcommand{\HOX}[1]{\todo[noline, size=\footnotesize]{#1}}
\providecommand\@dotsep{5}\def\listtodoname{List of Todos}\def\listoftodos{\hypersetup{linkcolor=black}\@starttoc{tdo}\listtodoname\hypersetup{linkcolor=blue}}\makeatother
\date{\today}
\DeclareMathOperator{\diag}{diag}
\numberwithin{equation}{section}%
\newtheorem{theorem}{Theorem}[section]
\newtheorem{proposition}{Proposition}[section]
\newtheorem{lemma}{Lemma}[section]
\newtheorem{corollary}{Corollary}[section]
\theoremstyle{definition}
\DeclareMathOperator{\spa}{span}
\def\p{\partial}
\newcommand{\pair}[1]{\left\langle #1 \right\rangle}
\DeclareMathOperator{\supp}{supp}
\DeclareMathOperator{\WF}{WF}
\newcommand{\eps}{\varepsilon}
\newcommand{\R}{{\bf R}}
\newcommand{\Id}{\mbox{Id}}
\renewcommand{\r}[1]{(\ref{#1})}
\newcommand{\PDO}{$\Psi$DO}
\newcommand{\be}[1]{\begin{equation}\label{#1}}
\newcommand{\ee}{\end{equation}}
\renewcommand{\d}{\mathrm{d}}
\newcommand{\bo}{\partial M}
\renewcommand{\a}{a}
\def\X{\mathcal X}
\def\Y{\mathcal Y}
\def\CC{\mathcal C}
\title[The Light Ray transform]{The Light Ray transform on  Lorentzian manifolds}
\author[Lassas]{Matti Lassas}
\address{Matti Lassas, Department of Mathematics and Statistics, University of Helsinki, Box 68, Helsinki, 00014, Finland}
\thanks{ML partly supported by  Academy of Finland, grants 273979, 284715, 312110, 314879 
and the AtMath project of UH}
\author[Oksanen]{Lauri Oksanen}
\address{Lauri Oksanen, Department of Mathematics, University College London, Gower Street, London, WC1E 6BT, UK}
\thanks{LO partly supported by EPSRC Grant EP/P01593X/1 and EP/R002207/1.}
\author[P.~Stefanov]{Plamen Stefanov}
\address{Plamen Stefanov, Department of Mathematics, Purdue University, West Lafayette, IN 47907}
\thanks{PS partly supported by  NSF  Grant DMS-1600327}
\author[G. Uhlmann]{Gunther Uhlmann}
\address{Gunther Uhlmann, Department of Mathematics, University of Washington, Seattle, WA 98195, and IAS, HKUST, Clear Water Bay, Hong Kong, China}
\thanks{GU was partly supported by NSF}
\begin{document}
\begin{abstract}
We study the weighted light ray transform $L$ of integrating functions on  a Lorentzian manifold  over lightlike geodesics. We analyze $L$ as a Fourier Integral Operator and show that if there are no conjugate points, one can recover the spacelike singularities of {\ntext a function $f$}  {\ntext from its the weighted light ray transform $Lf$} by a suitable filtered back-projection. 
\end{abstract} 
\maketitle

\section{Introduction}   
Let $g$ be a Lorentzian metric with signature $(-,+,\dots,+)$ on the manifold $M$ of dimension $1+n$, $n\ge2$. We study the weighted  Light Ray Transform 
\be{1.1}
L_\kappa f(\gamma) = \int_\R  \kappa(\gamma(s), \dot\gamma(s)) f(\gamma(s))\,\d s,
\ee
of functions (or distributions) over light-like geodesics $\gamma(s)$, known also as  null geodesics. 
There is no canonical unit speed  parameterization as in the Riemannian case as discussed below, and we have some freedom to chose parameterizations locally by smooth changes of the variables. We are interested in microlocal invertibility of $L_\kappa$,
{\ntext that is, the description of which part of the singularities of the function $f$ can be reconstructed in a stable say when $L_\kappa f$ is given. Observe that this} property does not depend on the parameterization.  Here $\kappa$ is a weight function, positively homogeneous in its second variable of degree zero, which makes it parameterization independent.   
When $\kappa=1$, we use the notation $L$. 
 This transform appears in the study of hyperbolic equations when we want to recover a potential term, {\ntext or other coefficients of the equation,} from boundary or scattering information, see, e.g., \cite{MR1004174,Ramm-Sj, Ramm_Rakesh_91,waters2014stable, watersR_2013, Salazar_13, Aicha_15, Kian2016a,LOSU-strings, St-Yang-DN,KLU,Lassas, Feizmohammadi2019} for time dependent coefficients or in Lorentzian setting,  and also     \cite{BellassouedDSF, Carlos_12} for time-independent ones.  {\ntext This problem arises in medical ultrasound tomography (see Section 
 \ref{sec_applications} on applications for the details).
  The tensorial version of
 inverse problem for  the weighted  Light Ray Transform} 
  arises in the recovery of first order perturbations \cite{St-Yang-DN} and in linearized  problem of recovery a Lorentzian metric from remote measurements \cite{LOSU-strings}. The latter is  motivated by the problem of recovering the topological defects in the early stages of the Universe from the red shift data of the cosmic background radiation collected by the Max Planck satellite. 
The light tray transform $L$  belongs to the class of the restricted X-ray transforms  since the complex of geodesics is restricted to the lower dimensional manifold $g(\dot\gamma, \dot\gamma)=0$.  
 
The goal of the paper is to study the microlocal invertibility of $L_\kappa$ under some geometric conditions. Injectivity of $L$ on functions in the Minkowski case was proved in \cite{MR1004174}. 
Support theorems for analytic metrics and weights were proven in \cite{S-support2014}, see also \cite{Siamak2016} for a support theorem of $L$ on one-forms in the Minkowski case. Those results in particular  imply injectivity  under some geometric conditions. Microlocal invertibility or the lack of it however is important in order to understand the stability of that inversion. 
It is fairly obvious that $L_\kappa f$ cannot ``see'' the wave front set $\WF(f)$,
{\ntext of the function $f$,} in the timelike cone because $L_\kappa$ is smoothing there.  This just follows from the inspection of the wave front of the Schwartz kernel of $L_\kappa $, see also  Theorem~\ref{thm_M} for the Minkowski case. Microlocal invertibility for Minkowski metrics was studied in \cite{LOSU-strings, wang2017parametrices}. 
We show that in the general Lorentzian setting, one can recover  $\WF(f)$ in the spacelike cone if there are no conjugate points. {\ntext In relativistic setting, this roughly speaking means
that given $L_\kappa  f$, one can determine  the discontinuities (or the other singularities) of $f$ that move slower than the speed of light.}
Some restrictions are needed even in the Riemannian case.   One possible approach 
is to analyze  the normal operator $L_\kappa 'L_\kappa $ as in   \cite{Greenleaf-Uhlmann, Greenleaf_Uhlmann90, Greenleaf_UhlmannCM}. That operator is a Fourier Integral Operator (FIO) associated with two intersecting Lagrangians, 
see \cite{Greenleaf-U_90} and the references there for that class and the $I^{p,l}$ calculus of such operators. 
The analysis of $L_\kappa 'L_\kappa $  in the Minkowski case for $n=2$ is presented in \cite{Greenleaf-Uhlmann, Greenleaf_Uhlmann90, Greenleaf_UhlmannCM} as an example illustrating a much more general theory. Applying the $I^{p,l}$ calculus to get more refined microlocal results however requires the cone condition which cannot be expected to hold on general Lorentzian manifolds due to the lack of symmetry, as pointed out in \cite{Greenleaf_UhlmannCM}. We analyze $L_\kappa$ as an FIO and show that given any conically compact set $K$ in the spacelike cone,   one can choose a suitable  {\ntext pseudodifferential operator (\PDO)} cutoff $Q$ so that $L'QL$ is a \PDO\ elliptic in a neighborhood of $K$; therefore we can recover the singularities of $f$ from $X_\kappa f$ in $K$. 

The paper is organized as follows. In section~\ref{sec_M}, we analyze the flat Minkowski case where the formulas are more explicit. The Lorentzian case is studied in section~\ref{sec_Lorentz}, which contains our main results. In section~\ref{sec_2D}, we show that when $n=2$, singularities can actually cancel each other over pairs of conjugate points, similarly to the Riemannian case  \cite{MonardSU14}. 
In section~\ref{sec_applications}, we present two applications where the light ray transform appears naturally and our results can be applied: recovery of a time dependent potential in a wave equation in Lorentzian geometry and recovery of a linearization of a time dependent sound speed near a background stationary one.

\HOX{Matti: Add de Sitter. Lauri: We could recall the conformal invariance, and that Einstein-de Sitter reduces to Minkowski}
\section{The Minkowski case} \label{sec_M}
Let $g = -\d t^2+ (\d x^1)^2+\dots +(\d x^n)^2$ be the Minkowski metric in $\R^{1+n}$. 
Future pointing lightlike geodesics (lines) are given by 
\be{g}
\ell_{z,\theta}(s) = (s,z+s\theta)
\ee
with $z \in \R^n$ and $|\theta|=1$. 
This definition is based on parameterization of the lightlike geodesics by their point of intersection with the spacelike hypersurface $t=0$ and direction $(1,\theta)$. The parameterization $(z,\theta)$ defines a natural topology and a manifold structure of the set of the future pointing lightlike geodesics, which we denote by $\mathcal{M}$ below. 
We define the light ray transform
\be{Ldef}
Lf(z,\theta) = \int_\R f(s, z+s\theta)\,\d s,\quad z\in \R^{n}, \,\theta\in S^{n-1}.
\ee

The lightlike geodesics can be reparameterized by shifting and rescaling $s$. Our choice is based on having a unit orthogonal projection $\theta$ on $t=0$ but if we choose another spacelike hyperplane of hypersurface, this changes. Therefore, there is no canonical choice of the parameter along the lightlike lines. Note also that the notion of unit projection  $\theta$ is not invariantly defined under Lorentzian transformations, but in a fixed coordinate system, the scaling parameter $1$ (i.e., $\d t/d s=1$) is a convenient choice. 
More generally, we could use a parameterization locally near a lightlike geodesic $\gamma_0$, by choosing initial points on any hypersurface $S$ transversal to $\gamma_0$, and initial lightlike directions; and we can identify the latter with their projections onto  $S$. We will use such a choice in Section \ref{sec_Lorentz} below when we consider more general Lorentzian manifolds. 

Given  a weight $\kappa\in C^\infty(\R\times\R^n\times S^{n-1})$, we can define the weighted version $L_\kappa$ of $L$ by
\[
L_\kappa f(z,\theta) = \int_\R   \kappa(s, z+s\theta,\theta)  f(s, z+s\theta)\,\d s,\quad z\in \R^{n}, \,\theta\in S^{n-1}.
\]
Under a smooth change of the parameterization $s\mapsto\alpha(z,\theta)s$ with some $\alpha>0$,  the weight is transformed into a new one: $\kappa/\alpha$, and the microlocal properties we study remain unchanged. 

In the terminology of relativity theory, vectors $v=(v^0,v')$ satisfying $|v_0|<|v'|$ (i.e., $g(v,v)>0$) are called \textit{spacelike}. The simplest example are vectors $(0,v')$, $v'\not=0$. 
Vectors with $|v_0|>|v'|$ (i.e., $g(v,v)<0$) are \textit{timelike}; an example is $(1,0)$ which points along the time axis. 
\textit{Lightlike} vectors are those for which we have equality: $g(v,v)=0$. For covectors, 
the definition is the same but we replace $g$ by $g^{-1}$, which is consistent with the operation of raising and lowering the indices. Of course, in the Minkowski case  $g$ and $g^{-1}$ coincide. We say that a hypersurface is timelike, respectively spacelike, if its normal (which is a covector) is spacelike, respectively timelike.

We introduce the following three microlocal regions of $T^*\R^{1+n}\setminus 0$:
\begin{itemize}
\item[]spacelike cone, $\Sigma_{s}=\{(t,x;\tau,\xi);\; |\tau|<|\xi| \}$;
\item[]lightlike cone, $\Sigma_{l}=\{(t,x;\tau,\xi);\; |\tau|=|\xi| \}$;
\item[]timelike cone, $\Sigma_{t}=\{(t,x;\tau,\xi);\; |\tau|>|\xi| \}$.
\end{itemize}
In the Minkowski case, we can think of them as products of $\R^{1+n}$ and the corresponding cones in the dual space $\R^{1+n}$.

\begin{figure}[!ht] 
  \centering
  \includegraphics[width = 2in,page=2]{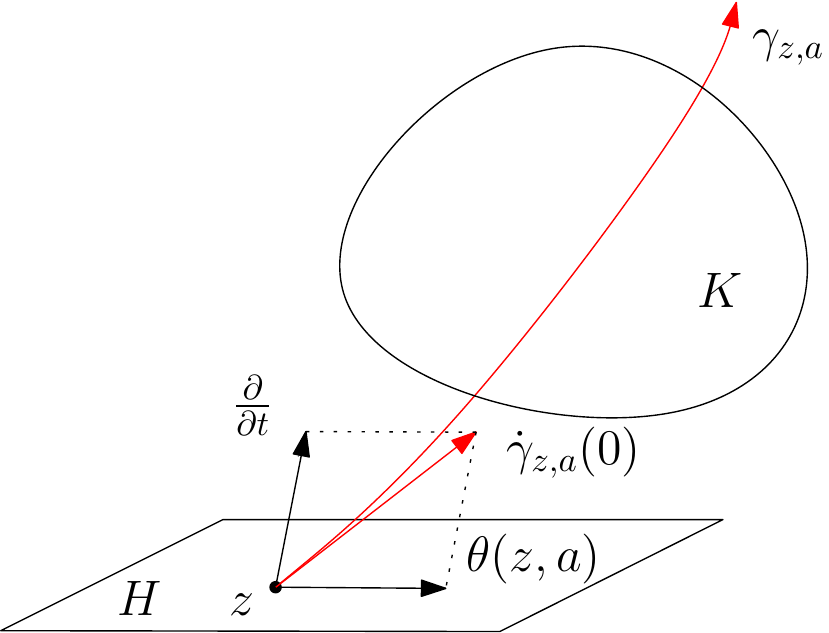}
  \caption{Knowing $Lf(z,\theta)$ for all $z$ and a fixed $\theta$ recovers the Fourier Transform of $f$ in codirections conormal to the lightlike lines in that set. Knowing it near some $(z,\theta)$ recovers $\WF(f)$  near those codirections along the line.}
  \label{pic01}
\end{figure}

\subsection{Fourier Transform analysis} 

By the Fourier Slice Theorem, knowing the X-ray transform for some direction $\omega$  recovers uniquely {\ntext the Fourier transform $\hat f$, of function $f$,} on $\omega^\perp$ if, say, $f$ is compactly supported. 
More precisely, the Fourier Slice Theorem in our case can be written as
\be{M1}
\hat f|_{ \tau+\xi\cdot\theta=0 } = 
\hat f(-\theta\cdot\xi,\xi) = \int_{\R^n} e^{-i z\cdot\xi}Lf(z,\theta) \, \d z, \quad\forall\theta\in S^{n-1}. 
\ee
The proof is immediate, and is in fact a consequence of the Fourier Slice Theorem in $\R^{1+n}$ for lines restricted to lightlike ones. The union of all $( 1,\theta)^\perp$ for all unit $\theta$ is $\{|\tau|\le |\xi|\} $ (the union $\Sigma_s \cup \Sigma_l$ of  the spacelike and the lightlike cones), as is easy to see. This correlates well with the theorems below. In particular, we see that knowing $L f$ for a distribution $f$ for which $Lf$ is well defined, and so is its Fourier transform, recovers $\hat f$ in  the spacelike cone $\Sigma_s$ uniquely and in a stable way. Under the assumption that $\supp f$ is contained in the cylinder $|x|\le R$ for some $R$ (and temperate w.r.t. $t$), one can use the analyticity of the partial Fourier transform of $f$ w.r.t.\ $x$ to extend $\hat f$ analytically to the timelike cone, as well. 
This is how it has been shown in  \cite{MR1004174} that $L$ is injective on such $f$. More general support theorems and  injectivity results, including such for analytic Lorentzian metrics, can be found in \cite{S-support2014}. 

\subsection{The normal operator $L'L$}
We formulate here a theorem about the Schwartz kernel of the normal operator $N=L'L$, where $L'$ is the transpose in terms of distributions (the same as the $L^2$ adjoint $L^*$ because the kernel of $L$ is real). The measure on $\R^n\times S^{n-1}$ is the standard product one. 
 One way  to prove the theorem is to think of $L$ as a weighted version of the X-ray transform $L$ with a distributional weight $2\sqrt{2}|\xi|\delta(\tau^2-|\xi|^2)$ and use the results about the weighted X-ray transform, see e.g. \cite{SU-Duke}, and allow a singular weight there. See also  \cite{SU-book,LOSU-strings}.

\begin{theorem}\label{thm_M}\ 
For every $f\in C_0^\infty(\R^{1+n})$, 

(a) 
\[
L'Lf = \mathcal{N}*f, \qquad \mathcal{N}(t,x) = \frac{\delta(t-|x|) +\delta(t+|x|) }{|x|^{n-1}}.
\]

(b) 
\[
L'Lf =  C_n\mathcal{F}^{-1}\frac{(|\xi|^2-\tau^2)_+^\frac{n-3}2} {|\xi|^{n-2}} \mathcal{F}f, \quad C_n:= 2\pi|S^{n-2}| . 
\]

(c) 
\[
h(\Box) f = C_n^{-1} |D_x|^{n-2}\Box_+^{\frac{3-n}{2}}L'Lf,
\]
where $h$ is the Heaviside function, and $\Box = \partial_t^2-\Delta_z$ and $\mathcal{F}$ is the Fourier transform. 
\end{theorem}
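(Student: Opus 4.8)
\emph{Strategy.} Part (a) is a direct change of variables, part (b) is most cleanly obtained by computing the Fourier multiplier of $N=L'L$ via the Fourier slice theorem \r{M1} (equivalently, by Fourier transforming the kernel in (a)), and part (c) is then elementary multiplier algebra. For (a), first compute the $L^2$ transpose: pairing $Lf$ against a test function $g$ on $\R^n\times S^{n-1}$ and substituting $x=z+s\theta$ in the $z$-integral gives $L'g(t,x)=\int_{S^{n-1}}g(x-t\theta,\theta)\,\d\theta$. Hence, with $Lf(z,\theta)=\int_\R f(\sigma,z+\sigma\theta)\,\d\sigma$ and the substitution $\sigma=t+r$,
\[
L'Lf(t,x)=\int_{S^{n-1}}\!\int_\R f(t+r,\,x+r\theta)\,\d r\,\d\theta .
\]
Splitting into $r>0$ and $r<0$ and in each piece passing to the Cartesian variable $y=r\theta$ (resp.\ $y=-r\theta$ after $\theta\mapsto-\theta$), whose Jacobian is $\d r\,\d\theta=|y|^{-(n-1)}\,\d y$, turns this into $\int_{\R^n}\big(f(t+|y|,x+y)+f(t-|y|,x+y)\big)|y|^{-(n-1)}\,\d y$. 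Substituting $x'=x+y$ and reading off the Schwartz kernel as a distribution in $(t-t',x-x')$ (using that $\delta$ is even) identifies it with $\mathcal N$; the two terms give $\delta(t+|x|)/|x|^{n-1}$ and $\delta(t-|x|)/|x|^{n-1}$. The integral converges for $f\in C_0^\infty$ because $|y|^{-(n-1)}$ is locally integrable on $\R^n$ when $n\ge2$.

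For (b) I would not Fourier transform the singular kernel directly but compute the multiplier. Taking partial Fourier transforms in the base variables and using the slice identity $\widetilde{Lf}(\xi,\theta)=\hat f(-\theta\cdot\xi,\xi)$ from \r{M1}, together with
\[
\widehat{L'g}(\tau,\xi)=2\pi\int_{S^{n-1}}\delta(\tau+\theta\cdot\xi)\,\widetilde g(\xi,\theta)\,\d\theta
\]
(obtained by integrating $e^{-it(\tau+\theta\cdot\xi)}$ in $t$ in the formula for $L'$ above), and noting that $-\theta\cdot\xi=\tau$ on the support of $\delta(\tau+\theta\cdot\xi)$, one gets $\widehat{L'Lf}(\tau,\xi)=2\pi\,\hat f(\tau,\xi)\int_{S^{n-1}}\delta(\tau+\theta\cdot\xi)\,\d\theta$. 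It remains to evaluate the last integral: by rotational invariance take $\xi=|\xi|e_n$ and use $\int_{S^{n-1}}\phi(\theta_n)\,\d\theta=|S^{n-2}|\int_{-1}^{1}\phi(c)(1-c^2)^{(n-3)/2}\,\d c$; the delta fires at $c=-\tau/|\xi|\in(-1,1)$ precisely when $|\tau|<|\xi|$, producing the factor $|S^{n-2}|\,|\xi|^{-(n-2)}(|\xi|^2-\tau^2)_+^{(n-3)/2}$. Multiplying by $2\pi$ gives the multiplier $C_n(|\xi|^2-\tau^2)_+^{(n-3)/2}/|\xi|^{n-2}$, which is (b); equivalently this is $\widehat{\mathcal N}$, consistent with (a). (Alternatively $\widehat{\mathcal N}$ can be computed head-on, reducing $\int_{S^{n-1}}e^{-ir\omega\cdot\xi}\,\d\omega$ to a Bessel function and then using the classical integral $\int_0^\infty J_\nu(ar)\cos(br)\,r^\mu\,\d r$, which yields the same power of $(|\xi|^2-\tau^2)_+$; the slice computation is shorter.)

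Part (c) is pure symbol algebra. By (b), $L'L$ is Fourier multiplication by $m(\tau,\xi)=C_n(|\xi|^2-\tau^2)_+^{(n-3)/2}/|\xi|^{n-2}$. Since $\Box=\partial_t^2-\Delta_z$ has symbol $|\xi|^2-\tau^2$, the operator $\Box_+^{\frac{3-n}{2}}$ is Fourier multiplication by $(|\xi|^2-\tau^2)_+^{(3-n)/2}$ (defined, for $n>3$, by analytic continuation of the homogeneous distribution $u_+^s$), and $|D_x|^{n-2}$ by $|\xi|^{n-2}$. Composing, $C_n^{-1}|D_x|^{n-2}\Box_+^{\frac{3-n}{2}}L'L$ has symbol $(|\xi|^2-\tau^2)_+^{(3-n)/2}(|\xi|^2-\tau^2)_+^{(n-3)/2}=(|\xi|^2-\tau^2)_+^{0}=h(|\xi|^2-\tau^2)$, using the one-variable rule $u_+^au_+^b=u_+^{a+b}$ (legitimate here since $a+b=0$ is not a negative integer); and $h(|\xi|^2-\tau^2)$ is precisely the symbol of $h(\Box)$. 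This gives (c).

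\emph{Main obstacle.} The delicate point throughout is distributional bookkeeping: the integrals defining $L'Lf$ (near $x'=x$) and $\widehat{L'Lf}$ (on the light cone $|\tau|=|\xi|$) converge only conditionally or in the sense of distributions, and the homogeneous distributions $(|\xi|^2-\tau^2)_+^s$ with $s<0$ appearing in (c) when $n>3$ must be regularized. I would make this rigorous by first proving all identities for $f\in C_0^\infty$, justifying the Fubini and Fourier-slice steps by a cutoff/regularization argument, and then invoking the standard meromorphic continuation of $u_+^s$ for the multiplier manipulations; alternatively, as the remark preceding the statement suggests, one can identify $L$ with the X-ray transform on $\R^{1+n}$ carrying the singular weight $2\sqrt2\,|\xi|\,\delta(\tau^2-|\xi|^2)$ and quote the weighted X-ray normal-operator calculus.
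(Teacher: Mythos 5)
Your proof is correct and follows essentially the same route as the paper's: computing $L'$ by duality, splitting the $s$-integral and passing to Cartesian variables to get the kernel $\mathcal N$ for (a); using the Fourier slice identity to obtain the multiplier $2\pi\int_{S^{n-1}}\delta(\tau+\theta\cdot\xi)\,\d\theta$ and evaluating the spherical integral by rotational invariance (this is exactly the paper's Lemma~\ref{X_Light_lemma_int}, which you prove inline); and multiplier algebra for (c). The only cosmetic difference is that you substitute $\sigma=t+r$ and then split on the sign of $r$, whereas the paper splits on $s\lessgtr t$ first and then substitutes, but these are the same computation.
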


{\ntext Before proving Theorem \ref{thm_M}, we make some comments.}
Above, we used the notation $s_+^m=\max(s^m,0)$ with the convention  that $s_+^0$ is the Heaviside function. 
In particular, when $n=3$, we get $\sigma(L'L) =   4\pi^2|\xi|^{-1}h\left( |\xi|^2- \tau^2\right)$. Then
\[
h(\Box)f = (4\pi^2)^{-1} |D_z|L'Lf.
\]
As we can expect, there is a conormal singularity of the symbol even away from $\xi=0$ living on the characteristic cone. Moreover, $L'L$ is elliptic in the spacelike cone, and only there. This shows that $L'L$ is a formal \PDO\ with a singular symbol having singularities conormal to the light cone $\tau^2=|\xi|^2$, i.e., it is an FIO corresponding to two intersecting Lagrangians. This is one of the main examples in \cite{Greenleaf_UhlmannCM}. 
The theorem shows that ``singularities traveling slower than light'' can be recovered stably from $Lf$ known globally.  The ones traveling faster cannot.

\begin{proof}[Proof of Theorem \ref{thm_M}]
To compute the dual $L'$ of $L$, write
\[
\begin{split}
\langle Lf,\phi\rangle &=  \int_{S^{n-1}} \int_{\R^n}\int_\R f(s,x+s\theta)\phi(x,\theta) \,\d s\,\d x\,\d\theta\\
&=  \int_{S^{n-1}} \int_{\R^n}\int_\R f(s,x)\phi(x-s\theta,\theta) \,\d s\,\d x\,\d\theta.
\end{split}
\]
Therefore,
\be{X_Light_L'}
L'\phi(t,x) = \int_{S^{n-1}}  \phi(x-t\theta,\theta)  \,\d\theta, \quad  \phi\in C_0^\infty(\R^{n}\times S^{n-1}). 
\ee
In particular, this identity allows us to define $L$ on $\mathcal{E}'(\R^{1+n})$ by duality. 

By \r{1.1} and \r{X_Light_L'},
\[
\begin{split}
L'Lf(t,x) &= \int_{S^{n-1}} \int_\R f(s,x-t\theta+s\theta)\, \d s \, \d\theta\\
  & =  \int_{S^{n-1}}\left( \int_{s<t} +\int_{s>t} \right) f(s,x-t\theta+s\theta)\, \d s \, \d\theta.
\end{split}
\]
For the first integral, we get
\be{X_light_10.12}
\begin{split}
\int_{S^{n-1}} \int_{s<t}  f(s,x-t\theta+s\theta)\, \d s \, \d\theta 
  &= \int_{S^{n-1}} \int_{-\infty}^0  f(t+\sigma,x+\sigma\theta)\, \d \sigma \, \d\theta \\
 &= \int_{S^{n-1}} \int_0^{\infty}  f(t-\sigma,x+\sigma\theta)\, \d \sigma \, \d\theta\\
 &= \int_{\R^n} f(t-|z|,x+z)|z|^{1-n}\,\d z\\
 & = \int_{\R^n} \frac{f(t-|x-x'|, x')}{  |x-x'|^{n-1}}\,\d x'.
\end{split}
\ee
For the second one, we have
\[
\begin{split}
\int_{S^{n-1}} \int_{s>t}  f(s,x-t\theta+s\theta)\, \d s \, \d\theta 
 &= \int_{S^{n-1}} \int_0^{\infty}  f(t+\sigma,x+\sigma\theta)\, \d \sigma \, \d\theta\\
 & = \int_{\R^n} \frac{f(t+|x-x'|, x')}{  |x-x'|^{n-1}}\,\d x'.
\end{split}
\]
This completes the proof of (a). 
To prove (b), one can take formally the Fourier transform of $\mathcal{N}$ to get
\be{M2}
\hat{\mathcal{N}}(\tau,\xi) = 2\pi\int_{S^{n-1}} \delta(\tau+\theta\cdot\xi)\,\d\theta. 
\ee
This representation can also be justified by writing  \r{M1} in the form
\[
\hat f(-\theta\cdot\xi,\xi) = \mathcal{F}_{z\to\xi}Lf(z,\theta) \quad \Longrightarrow \quad Lf(z,\theta) =\mathcal{F}^{-1}_{\xi\to z} \hat f(-\theta\cdot\xi,\xi).
 \] 
Then 
\be{LL}
\begin{split}
\langle Lf,Lg\rangle &= (2\pi)^{-n} \int\int_{S^{n-1}} \hat f(-\theta\cdot\xi,\xi) \hat g(-\theta\cdot\xi,\xi)\,\d\theta \,\d\xi\\
& = (2\pi)^{-n}\int\int_{S^{n-1}} \delta(\tau+ \theta\cdot\xi)  \hat f(\tau,\xi) \hat g(\tau,\xi)\, \d\theta \,\d\tau\, \d\xi .
\end{split}
 \ee
Therefore, if we denote for a moment by $K$ the integral in \r{M2} but multiplied by $(2\pi)^{-n}$ instead of $2\pi$, we get $\langle L\mathcal{F}^{-1} \hat f, L\mathcal{F}^{-1}\hat g \rangle =  \langle \hat f,  K \hat g\rangle $; hence $\mathcal{F^*}^{-1}L'L\mathcal{F}^{-1}=K  $. Since $\mathcal{F}^* = (2\pi)^{1+n}\mathcal{F}^{-1}$, we get \r{M2}. 

To compute $\hat{\mathcal{N}}$ explicitly, take a test function $\phi(\tau,\xi)$ and write
\[
\begin{split}
\langle \hat{\mathcal{N}} ,\phi\rangle   &= 2\pi\int \int_{S^{n-1}} \delta(\tau+\theta\cdot\xi) \phi(\tau,\xi) \,\d\theta\, \d\tau\,\d\xi = 2\pi\int \int_{S^{n-1}} \phi(-\theta\cdot\xi,\xi) \,\d\theta\,\d\xi \\
  &= 2\pi \iint F(s,\xi)\phi(s,\xi) \, \d s\, \d\xi
\end{split}
\]
with $F$ is the $L^1_{\rm loc}$ function  in \r{MF} below. This proves (b). 

Part (c) of the lemma follows directly from (b). 
\end{proof}

We used the following lemma. 
\begin{lemma} \label{X_Light_lemma_int}
For every $\psi\in\mathcal{S}(\R^{1+n})$,
\be{X_light_l1}
\int_{S^{n-1}} \psi(\theta\cdot\xi)\,\d\theta =  |S^{n-2}||\xi|^{2-n}\int_\R\psi (s) (|\xi|^2-s^2)_+^{\frac{n-3}{2}} \,\d s,\quad \xi\not=0,
\ee
where $|S^{n-2}|$ is the area of $S^{n-2}$ if $n\ge3$; equal to $2$ when $n=2$. 
\end{lemma}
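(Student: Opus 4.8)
The plan is to reduce \r{X_light_l1} to a one–dimensional integral by exploiting the rotational symmetry of the sphere. Both sides are invariant under $\xi\mapsto O\xi$ for $O\in SO(n)$: on the left we substitute $\theta\mapsto O^{-1}\theta$ in the rotation–invariant measure $\d\theta$, and on the right only $|\xi|$ appears. Hence it suffices to prove the identity for $\xi=|\xi|e_1$, in which case $\theta\cdot\xi=|\xi|\theta^1$ with $\theta^1$ the first coordinate of $\theta\in S^{n-1}$.

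Next I would use the standard polar parameterization of $S^{n-1}$ adapted to the $e_1$–axis, namely $\theta=(\cos\phi,(\sin\phi)\,\omega)$ with $\phi\in[0,\pi]$ and $\omega\in S^{n-2}$, under which $\d\theta=(\sin\phi)^{n-2}\,\d\phi\,\d\omega$. Since the integrand depends only on $\phi$, the integration in $\omega$ contributes the factor $|S^{n-2}|$ (read as $|S^0|=2$ when $n=2$), leaving
\[
\int_{S^{n-1}}\psi(\theta\cdot\xi)\,\d\theta=|S^{n-2}|\int_0^\pi\psi(|\xi|\cos\phi)\,(\sin\phi)^{n-2}\,\d\phi.
\]

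Then I substitute $u=\cos\phi$, so that on $[0,\pi]$ one has $(\sin\phi)^{n-2}\,\d\phi=(1-u^2)^{(n-3)/2}\,\d u$ (using $\sin\phi\ge0$), and afterwards $s=|\xi|u$, which turns $1-u^2$ into $(|\xi|^2-s^2)/|\xi|^2$ and $\d u$ into $|\xi|^{-1}\,\d s$. Collecting powers of $|\xi|$ gives $|\xi|^{-(n-3)}\cdot|\xi|^{-1}=|\xi|^{2-n}$ together with the integral of $\psi(s)(|\xi|^2-s^2)^{(n-3)/2}$ over $s\in[-|\xi|,|\xi|]$, which is precisely the right-hand side after inserting the cutoff $(\cdot)_+$.

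There is no genuine obstacle here; the only points meriting a word of care are the degenerate exponents. For $n=2$ the weight $(|\xi|^2-s^2)_+^{-1/2}$ is singular at $s=\pm|\xi|$, but it remains locally integrable, so the right-hand side converges for $\psi\in\mathcal S$; for $n=3$ the exponent is $0$ and $(|\xi|^2-s^2)_+^{0}$ is the indicator of $\{|s|<|\xi|\}$; for $n\ge4$ the exponent is positive (a half-integer when $n$ is even). In all cases the two substitutions above are legitimate changes of variables, so the computation goes through verbatim.
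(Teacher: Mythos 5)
Your proof is correct. Note that the paper does not actually supply a proof of Lemma \ref{X_Light_lemma_int} at all---it is stated without argument as an elementary computation---so there is nothing to compare against; your rotation-invariance reduction to $\xi=|\xi|e_1$, the polar parametrization of $S^{n-1}$ about the $e_1$-axis, and the two successive changes of variable $u=\cos\phi$, $s=|\xi|u$ are exactly the standard derivation one would expect, and the bookkeeping of the power of $|\xi|$ and the treatment of the degenerate exponents ($n=2,3$) are all accurate.
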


Note that the kernel 
\be{MF}
F(s,\xi):= |S^{n-2}||\xi|^{2-n} (|\xi|^2-s^2)_+^{\frac{n-3}{2}}
\ee
is homogeneous of order $-1$ and as such, it is locally integrable. It has a unique extension as a homogeneous distribution of order $-1$ given by an $L^1_{\rm loc}$ function. Also, the l.h.s.\ of \r{X_light_l1} is a smooth function of $\xi$ everywhere, including at $\xi=0$. 

\subsection{$L_\kappa$ as an FIO} \label{sec_L_FIO}
Theorem~\ref{thm_M}(c) implies some recovery of singularities results already. If $f\in \mathcal{E}'(\R^{1+n})$ and $L_\kappa f\in C^\infty(\R^n \times S^{n-1})$, then $\WF(f)$ does not contain spacelike singularities (note that this argument requires global knowledge of $L_\kappa f$). One the other hand, one can easily construct functions of distributions with timelike singularities so that $L_\kappa f=0$; for example take any non-smooth $h\in \mathcal{E}'(\R)$ with integral zero, then for any $a\in\R^n$ with $|a|<1$,  for $f= h(t+x\cdot  a)$ we have $L f=0$; and 
\[
\WF(f)=
{\ntext  \{(t ,x,\tau,\xi )\ |\; t=s-x\cdot a,\ \xi=a\tau,\ (s,\tau)\in \WF(h) \}.}
\]
Then $|\xi|=|\tau| |a|<|\tau|$ is in the timelike cone. In particular, $\delta'(t)$ is in the kernel of $L_\kappa$ and has timelike singularities only. 

 We can get more precise statements by studying first the Schwartz kernel of $L_\kappa $. It is given by 
\be{L_kernel}
\mathcal{L_\kappa}(z,\theta,t,x) = \kappa(t, x,\theta) \delta( x- z-t\theta ).
\ee
In other words, $\mathcal{L_\kappa} = \kappa\delta_X$, where 
\[
X = \left\{ (z,\theta,t,x)|\; 
x=z+t\theta\right\} 
\]
is the point-line relation. 
We write $M=\R^{1+n}= \R_{t,x}^{1+n}$ and let $\mathcal M \cong \R^n_z\times S_\theta^{n-1}$ be the manifold of the lines in $M$.
Clearly, $X$ is a $2n$-dimensional submanifold of the product $\mathcal{M}\times    M \cong \R^n_z\times S_\theta^{n-1}\times \R_{t,x}^{1+n}$ which itself is $3n$-dimensional. Its conormal bundle is given by 
\[
N^*X = \left\{  ((z,\theta,t,x) , (\zeta,\hat\theta, \tau,\xi )     )\big| \; x=z+t\theta,\,  \xi=-\zeta, \,  \tau = -\theta\cdot\xi, \, \hat\theta= t(-\xi +(\xi\cdot\theta) \theta)  \right\}
\]
with $\hat\theta$ conormal to $S^{n-1}$ at  $\theta$. 
We consider $N^*X$ as a subset of $T^*(\mathcal{M}\times    M) \setminus 0$.
This is a conical Lagrangian manifold which coincides with the wave front set of the kernel $\mathcal{L}_\kappa$ when $\kappa$ is nowhere vanishing; and includes the latter for general $\kappa$. 

Note that $(\tau,\xi)$ is space or light-like on $N^*X$ and it is the latter if and only if 
    \begin{align}\label{minkowski_lightlike}
\zeta \parallel \theta.
    \end{align}
Indeed, $|\tau| = |\xi|$ is equivalent with $|\theta \cdot \zeta| = |\zeta|$ on $N^* X$.
As will be explained below, the relation (\ref{minkowski_lightlike}) allows us to choose a microlocal cutoff on $\mathcal M$ so that when applied to $L_\kappa f$, it cuts away the singularities in $\WF(f)$ near $\Sigma_l$. This will be useful in view of the singular behavior near $\Sigma_l$, as illustrated for $L' L$ in Theorem \ref{thm_M}.

Let us also mention that $|\tau| = |\xi|$ is equivalent with $-\xi +(\xi\cdot\theta) \theta = 0$ on $N^* X$.
In particular, $\hat \theta = 0$ in this case. 
We will show, see Lemma \ref{lem_lightlike} below, 
that on general Lorentzian manifolds, $(\tau,\xi)$ being lightlike on $N^* X$ is equivalent with $\zeta \parallel \theta$ and $\hat \theta = 0$ (or rather its suitable reformulation in the more general context). 

The canonical relation associated to $L_\kappa$ is given by 
\be{C0}
\begin{split}
C := N^*X' = \Big\{  &((z,\theta,\zeta,\hat\theta) , (t, x, \tau,\xi )     )
{\ntext \in T^*(\mathcal{M}\times M) \setminus 0}\ 
\big| \\ &\; x=z+t\theta, \,  \tau = -\theta\cdot\xi, \, \zeta=\xi, \, \hat\theta=t(\xi -(\xi\cdot\theta) \theta ) \Big\}.
\end{split}
\ee
Here we rearranged the variables to comply with the notational convention in \cite{Hormander4}. 
If one of the covectors $(\zeta,\hat\theta)$ and $(\tau,\xi)$ vanishes, then the other one does, too. Therefore, $C$ is a (homogeneous) canonical relation from $T^*M\setminus0$ to $T^*\mathcal{M}\setminus0$ 
 and it is also clearly conically closed in $T^*(\mathcal{M}\times M) \setminus 0$. 
Therefore, this, and the fact that its kernel is a conormal distribution, show that  $L_\kappa$ is an FIO with the canonical relation $C$, see \cite[Chapter~XXV.25.2]{Hormander4}. 
In particular, 
\be{WF}
\WF(L_\kappa f) \subset C\circ \WF(f),
\ee
a statement independent of the FIO theory. In order to compute the order of $L_\kappa$, we can write its Schwartz kernel as the oscillatory integral
$$
\frac {\kappa(t,x,\theta   ) } {(2\pi)^n} \int_{\R^n} e^{i(t\theta\cdot\xi+(z-x)\cdot\xi)} \d\xi,
$$
see \r{L_kernel}. Then the order $m$ of $L_\kappa$ satisfies, see \cite[Def. 3.2.2]{Hormander_FIO},
    \begin{align}\label{order_min}
0 = m + (\dim(M) + \dim(\mathcal M) - 2n)/4,
\quad \text{that is,} \quad
m = -n/4,
    \end{align}
because $M = \R^{1+n}$ and $\mathcal M \cong \R^n \times S^{n-1}$.

The relation $C$ also allows the following interpretation: it consists of points and lightlike lines through them; next, $(\tau,\xi)$ is conormal to $(1,\theta)$, i.e., to each such line $\ell_{z,\theta}$; and the dual variables $(\zeta,\hat\theta)$ can be interpreted as  projections of Jacobi fields along  the line $\ell_{z,\theta}$ to its conormal bundle.
This interpretation is discussed further in Section \ref{sec_Lorentz} below in the context of general Lorentzian manifolds, see (\ref{Pi_M1}).

Let $\pi_\mathcal{M}$, $\pi_M$ be the natural projections of $C$ onto $T^* \mathcal{M}$ and $T^*M$, respectively. 
\be{4.2}
\begin{tikzcd}[]
&  C \arrow{dl}[swap]{\pi_{\mathcal M}}\arrow{dr}{\pi_M} & \\
 T^*\mathcal{M}&&T^*M 
\end{tikzcd}
\ee
The dimensions from left to right are $4n-2 \ge 3n\ge 2n+2$. The difference between two consecutive terms is $n-2$ and they are all equal when $n=2$. 
The manifold $Z$ can be parameterized by $(z,\theta,t)$. Then $C$ can be parameterized by 
$$
C_0 = \{(z,\theta,t,\xi) \in \R^n \times S^{n-1} \times \R \times (\R^n \setminus 0)\}.
$$. 

We have  
\be{pi_calM}
\pi_\mathcal{M} ((z,\theta,\zeta,\hat\theta) , (t, x, \tau,\xi )) =     (z,\theta,\zeta,\hat\theta)   = (z,\theta,\xi, t(\xi -(\xi\cdot\theta) \theta ) ) .
\ee
This is a map from the $3n$ dimensional $C$ to the $4n-2$ dimensional $T^*\mathcal{M}$. 
If $n=2$, $\pi_\mathcal{M}$ is a local diffeomorphism when $C$ is restricted to spacelike $(\tau, \xi)$.  Indeed, we recall that in that case $\xi -(\xi\cdot\theta)\theta\not=0$. Therefore, 
the equation $\hat \theta =  t(\xi -(\xi\cdot\theta)\theta)  $ can be solved for $t$. 
When $n\ge3$, $\d\pi_\mathcal{M}$ has full rank $3n$ away from the light-like cone, i.e., the defect is $(4n-2)-3n= n-2$ and in particular is injective  there.  The projection $\pi_\mathcal{M}$ is also injective, therefore, it is an immersion (on the spacelike cone). 
 Next, there is $t \in \R$ such that $\hat \theta =  t(\xi -(\xi\cdot\theta)\theta)$ if and only if $\hat\theta$ is colinear with the projection of  $\zeta= \xi$ to $\theta_\perp= \{\xi|\; \xi\cdot\theta=0\}$, which describes the range of $\pi_\mathcal{M}$ for $(\tau,\xi)$ spacelike.

If $(\tau,\xi)$ is lightlike, then the right-hand side of (\ref{pi_calM}) reduces to $(z,\theta,\xi,0)$. In particular, for lightlike $(\tau^j,\xi^j)$, $j=1,2$, the equation
    \begin{align}\label{pi_calM_lightlike}
\pi_\mathcal{M} ((z,\theta,\zeta,\hat\theta) , (t_1, x_1, \tau^1,\xi^1 )) =  \pi_\mathcal{M} ((z,\theta,\zeta,\hat\theta) , (t_2, x_2, \tau^2,\xi^2 ))
    \end{align}
is equivalent with $( \tau^1,\xi^1) = ( \tau^2,\xi^2)$ and both $(t_j, x_j)$, $j=1,2$, lying on the line $\ell_{z,\theta}$.

For the second projection $\pi_M$ in \r{4.2} we get
    \begin{align}\label{pi_M}
\pi_{M} ((z,\theta,\zeta,\hat\theta) , (t, x, \tau,\xi )) =     (t, x, \tau,\xi )  = (t, z+t\theta, -\theta\cdot\xi, \xi) .
    \end{align}
Its differential has full rank for spacelike $(\tau,\xi)$.  The projection $\pi_M$ is surjective onto the spacelike cone, as well. Indeed, given $(t,x,\tau,\xi) $, we need to solve the equation given by the second equality above  for the parameters  $(z,t,\theta,\xi)$. 
The variables $t$ and $\xi$ are obtained trivially, and we need to solve $x=z+t\theta$ and $\tau=-\theta\cdot\xi$ for $z$ and $\theta$.  For unit $\theta$, the latter equation    has an $(n-2)$ dimensional sphere of solutions (the intersection of the unit sphere with that plane in the $\theta$ space) when  $(\tau,\xi)$ is spacelike. For each solution $\theta$, we obtain $z$ by solving $z+t\theta=x$.    We can choose a locally smooth solution, which in particular shows that the differential has full rank.
If $(\tau,\xi)$ is lightlike, i.e., if $|\tau|=|\xi|$, the equation   $\tau= -\theta\cdot\xi$ has a unique solution for $\theta$ given by $\theta=-\text{sgn}(\tau)\, \xi/|\xi|$. If $(\tau,\xi)$ is timelike, there are no solutions. 

If $n=2$, $\pi_M$ is a local diffeomorphism and it is $2$-to-$1$ in the spacelike cone  because   $\tau= -\theta\cdot\xi$  has two solutions for $\theta\in S^1$:  $\theta_\pm  = \pm \sqrt{1-\tau^2/|\xi|^2 }( \xi^\perp/|\xi| ) -\tau\xi/|\xi|^2$ for spacelike $(\tau,\xi)$ with some fixed choice of the rotation by $\pi/2$ to define $\xi^\perp$.  This describes the non-uniqueness class of $\pi_M$.

We summarize the properties of the projections $\pi_{\mathcal M}$ and $\pi_M$ as follows. 

\begin{lemma}\label{lem_pr_min}
The differential $\d\pi_\mathcal{M}$ is injective and the differential $\d \pi_M$ is surjective at $(z,\theta,t,\xi) \in C_0$, with $\xi$ spacelike.
The projection $\pi_{\mathcal M}$ is injective on the set of points $(z,\theta,t,\xi) \in C_0$, with $\xi$ spacelike. 
The projection $\pi_M$ is surjective onto $\Sigma_s$.
\end{lemma}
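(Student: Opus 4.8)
The plan is to read off all four assertions directly from the explicit coordinate formulas \r{pi_calM} and \r{pi_M} for $\pi_{\mathcal M}$ and $\pi_M$ in the chart $C_0=\{(z,\theta,t,\xi)\}$, using throughout one elementary fact: if $\xi$ is spacelike and $\theta\in S^{n-1}$, then the orthogonal projection $\xi-(\xi\cdot\theta)\theta$ of $\xi$ onto $\theta^\perp\cong T_\theta S^{n-1}$ is nonzero. Indeed, were it zero we would have $\xi\parallel\theta$, hence $|\tau|=|\theta\cdot\xi|=|\xi|$ on $C$, contradicting $(\tau,\xi)$ being spacelike; this is the dichotomy already recorded in \r{minkowski_lightlike}.

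For the injectivity of $\d\pi_{\mathcal M}$ I would argue as follows. By \r{pi_calM}, $\pi_{\mathcal M}(z,\theta,t,\xi)=(z,\theta,\xi,t(\xi-(\xi\cdot\theta)\theta))$, so its differential sends a tangent vector $(\delta z,\delta\theta,\delta t,\delta\xi)\in\R^n\times T_\theta S^{n-1}\times\R\times\R^n$ to a tuple whose base- and $\zeta$-components are exactly $(\delta z,\delta\theta,\delta\xi)$. If the image vanishes, then $\delta z=\delta\theta=\delta\xi=0$, and the remaining $\hat\theta$-component reduces to $\delta t\,(\xi-(\xi\cdot\theta)\theta)$, which vanishes only if $\delta t=0$ by the fact above; hence $\d\pi_{\mathcal M}$ is injective. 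The same computation gives global injectivity of $\pi_{\mathcal M}$ on the spacelike part of $C_0$: if two such points have equal image, the first three slots force $(z,\theta,\xi)$ to agree, and then $t_1(\xi-(\xi\cdot\theta)\theta)=t_2(\xi-(\xi\cdot\theta)\theta)$ forces $t_1=t_2$.

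For the surjectivity of $\d\pi_M$: by \r{pi_M}, $\pi_M(z,\theta,t,\xi)=(t,z+t\theta,-\theta\cdot\xi,\xi)$; given a target $(a,b,c,d)\in\R\times\R^n\times\R\times\R^n$ one sets $\delta t=a$ and $\delta\xi=d$, solves $-\delta\theta\cdot\xi=c+\theta\cdot d$ for $\delta\theta\in\theta^\perp$ (possible because the functional $\delta\theta\mapsto\delta\theta\cdot\xi$ on $\theta^\perp$ is the pairing with the nonzero vector $\xi-(\xi\cdot\theta)\theta$), and finally puts $\delta z=b-a\theta-t\,\delta\theta$, which maps to $(a,b,c,d)$. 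For the surjectivity of $\pi_M$ onto $\Sigma_s$: given $(t,x,\tau,\xi)$ with $|\tau|<|\xi|$, keep $t$ and $\xi$, pick $\theta$ in the $(n-2)$-sphere $\{\theta\in S^{n-1}:\theta\cdot\xi=-\tau\}$ (nonempty precisely because $|\tau|<|\xi|$), and set $z=x-t\theta$; this is the argument in the paragraph preceding the lemma, and a locally smooth choice of $\theta$ reproves the surjectivity of $\d\pi_M$ as well.

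There is no serious obstacle here: all four statements are immediate from the formulas already derived, and the only point needing a moment's thought is the nonvanishing of $\xi-(\xi\cdot\theta)\theta$ on the spacelike part of $C$. As a sanity check, the dimension count $(4n-2)\ge 3n\ge 2n+2$ from \r{4.2} shows that injectivity of $\d\pi_{\mathcal M}$ and surjectivity of $\d\pi_M$ are the best one could hope for, with equality of all three dimensions — and hence the possibility that $\pi_{\mathcal M}$ and $\pi_M$ are local diffeomorphisms — only when $n=2$.
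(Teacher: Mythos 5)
Your proof is correct and follows essentially the same route as the paper: both read the four assertions directly off the explicit formulas \r{pi_calM} and \r{pi_M}, with the single key observation that $\xi-(\xi\cdot\theta)\theta\ne 0$ for spacelike $(\tau,\xi)$ on $C$. Your write-up is if anything slightly more explicit in the tangent-space computations than the paper's summary discussion preceding the lemma.
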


Let us also summarize the properties of $C$ considered as a relation (a multi-valued map $C=\pi_\mathcal{M}\circ \pi_M^{-1}$).

\begin{lemma}\label{lem_C_min_1}  
$C$ has domain $\Sigma_s\cup \Sigma_l$. For every $(t,x,\tau, \xi)\in \Sigma_s$, $C(t,x,\tau, \xi)$ is the set of all $(x-t\theta,\theta,\xi, t(\xi -(\xi\cdot\theta) \theta ) )$ with $\theta\in S^{n-1}$ a   solution of $(\tau,\xi)\cdot (1,\theta)=0$. 
\begin{itemize}
\item[(a)] If $n=2$, then $C$ is a local diffeomorphism from $\Sigma_s$ to $T^*\mathcal{M}\setminus 0$, and a $1$-to-$2$ map globally on $\Sigma_s$. 
\item[(b)] If $n\ge3$, for every $(t,x,\tau, \xi)\in \Sigma_s$,  $C(t,x,\tau, \xi)$ is diffeomorphic to $S^{n-2}$.  
\end{itemize}
For every $(t,x,\tau,\xi)\in\Sigma_l$, $C(t,x,\tau, \xi)= (x-t\theta,\theta,\xi,0)$, where $\theta= -\text{\rm sgn}(\tau)\xi/|\xi|$. In particular, $C(\Sigma_l)= \big\{ (z,\theta,\zeta,\hat\theta)\,\big|\;\zeta\parallel \theta, \; \hat\theta=0\big\}\setminus 0$.
\end{lemma}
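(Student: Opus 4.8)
The plan is to read the statement off from the two projection formulas \r{pi_calM}, \r{pi_M}, from Lemma~\ref{lem_pr_min}, and from the explicit computations already carried out in the paragraphs just before the lemma; this is essentially a repackaging of what precedes. Throughout I use that $C=\pi_\mathcal{M}\circ\pi_M^{-1}$, so for a covector $\nu=(t,x,\tau,\xi)$ the set $C(\nu)$ is the $\pi_\mathcal{M}$-image of the fiber $\pi_M^{-1}(\nu)\subset C_0$, and $C(\nu)\ne\emptyset$ exactly when $\nu$ lies in the range of $\pi_M$.

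I would first pin down the domain. By \r{pi_M} the ``time'' component of $\pi_M$ is $\tau=-\theta\cdot\xi$, which forces $|\tau|\le|\xi|$, so the range of $\pi_M$ is contained in $\Sigma_s\cup\Sigma_l$ (timelike covectors have no preimage); and the discussion following \r{pi_M} shows $\pi_M$ is onto $\Sigma_s$ and onto $\Sigma_l$, with the single direction $\theta=-\text{sgn}(\tau)\xi/|\xi|$ occurring in the lightlike case. Hence $C$ has domain $\Sigma_s\cup\Sigma_l$.

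Next I would treat the spacelike case. In the coordinates $(z,\theta,t,\xi)$ on $C_0$, the fiber $\pi_M^{-1}(\nu)$ over $\nu\in\Sigma_s$ consists of the points with $t$ and $\xi$ as prescribed, $z=x-t\theta$, and $\theta$ running over $S_\nu=\{\theta\in S^{n-1}:\theta\cdot\xi=-\tau\}$; since $|\tau|/|\xi|<1$, the affine hyperplane $\theta\cdot\xi=-\tau$ cuts the unit sphere in a round $(n-2)$-sphere (a pair of points when $n=2$). Feeding this fiber into \r{pi_calM} gives the asserted description of $C(\nu)$. For $n\ge3$, Lemma~\ref{lem_pr_min} tells us $\pi_\mathcal{M}$ is injective with injective differential on the spacelike part, so its restriction to the compact manifold $S_\nu\cong S^{n-2}$ is an injective immersion, hence an embedding, giving $C(\nu)\cong S^{n-2}$; this is (b). For $n=2$, the text already records that $\pi_\mathcal{M}$ and $\pi_M$ are local diffeomorphisms on the spacelike locus and that $\pi_M$ is $2$-to-$1$ onto $\Sigma_s$ via the two branches $\theta_\pm(\tau,\xi)$; composing, on a neighbourhood of any $\nu\in\Sigma_s$ one selects one branch $\theta_\pm$ and $C$ restricted to it is a diffeomorphism onto an open subset of $T^*\mathcal M\setminus0$, while globally $\nu$ has exactly the two images produced by $\theta_+$ and $\theta_-$; this is (a).

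Finally the lightlike case. For $|\tau|=|\xi|$, the equation $\theta\cdot\xi=-\tau$ with $|\theta|=1$ saturates the Cauchy--Schwarz inequality, forcing the unique solution $\theta=-\text{sgn}(\tau)\xi/|\xi|$; then $\xi\cdot\theta=-\text{sgn}(\tau)|\xi|$, so $(\xi\cdot\theta)\theta=\xi$ and the last slot of \r{pi_calM} becomes $t(\xi-(\xi\cdot\theta)\theta)=0$, yielding $C(\nu)=(x-t\theta,\theta,\xi,0)$ with $\zeta=\xi\parallel\theta$. The reverse inclusion $C(\Sigma_l)\supset\{(z,\theta,\zeta,\hat\theta):\zeta\parallel\theta,\ \hat\theta=0\}\setminus0$ is checked directly: given such a target, put $\xi=\zeta$ (nonzero), $\tau=-\theta\cdot\xi$ (so $|\tau|=|\xi|$), pick any $t$ and $x=z+t\theta$; the resulting $\hat\theta$ vanishes. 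The one point calling for a little care --- the closest thing here to an obstacle --- is the exact meaning of ``local diffeomorphism'' for the relation $C$ in part (a): one must first restrict to a single smooth branch $\theta_\pm(\tau,\xi)$, on which $C$ is single-valued and smooth, before combining the separate local-diffeomorphism statements for $\pi_M$ and $\pi_\mathcal{M}$. Everything else is routine bookkeeping with material already present in the text.
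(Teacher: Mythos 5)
Your proposal is correct and takes essentially the same route as the paper: the paper states Lemma~\ref{lem_C_min_1} without a separate proof, as a summary of the preceding analysis of $\pi_{\mathcal M}$, $\pi_M$, and Lemma~\ref{lem_pr_min}, and your argument reads those same conclusions off in the intended way, with the useful extra care of spelling out the compact-embedding step for $C(\nu)\cong S^{n-2}$ and the single-branch reading of ``local diffeomorphism'' when $n=2$.
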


In particular, this proposition says that $\WF(f)$ in the spacelike cone may affect $\WF(L_\kappa f)$ at all lightlike lines $\ell$ through the base point and normal to its the covector there. 

The properties of $C^{-1}$ are summarized as follows.

\begin{lemma}\label{lem_C_min_2}  
$C^{-1}$ has domain in $T^*\mathcal{M}\setminus 0$ consisting of all $(z,\theta, \zeta,\hat\theta)$ so that 
$\hat\theta$ is colinear with the projection of  $\zeta$ to $\theta_\perp$. Its range is $\Sigma_s\cup \Sigma_l$. The points mapped to $\Sigma_l$ are the ones with $\theta\parallel\zeta$. 
For every $(z,\theta,\zeta, \hat\theta) $ in the domain with $\theta  \not\hspace{.14em}\parallel \zeta$, we have  
$$C^{-1}(z,\theta,\zeta, \hat\theta) = (t,z+t\theta,-\theta\cdot\zeta,  \zeta),$$ where $t$ is the unique solution to $\hat\theta = t(\xi-(\xi\cdot\theta)\theta)$. 
\end{lemma}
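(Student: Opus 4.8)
The plan is to derive everything in Lemma \ref{lem_C_min_2} directly from the explicit description of the canonical relation $C$ in \r{C0}, by inverting the defining equations. Recall that a point $((z,\theta,\zeta,\hat\theta),(t,x,\tau,\xi))\in C$ satisfies $x=z+t\theta$, $\tau=-\theta\cdot\xi$, $\zeta=\xi$, and $\hat\theta=t(\xi-(\xi\cdot\theta)\theta)$. Given a target covector $(z,\theta,\zeta,\hat\theta)\in T^*\mathcal M\setminus 0$, I would ask when it lies in the domain of $C^{-1}$, i.e.\ when these equations can be solved for $(t,x,\tau,\xi)$. From $\zeta=\xi$ we read off $\xi=\zeta$; from $\tau=-\theta\cdot\xi$ we read off $\tau=-\theta\cdot\zeta$; from $x=z+t\theta$ we get $x$ once $t$ is known; so the only genuine constraint is the solvability of $\hat\theta=t(\zeta-(\zeta\cdot\theta)\theta)$ for the scalar $t$. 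Since $\zeta-(\zeta\cdot\theta)\theta$ is precisely the orthogonal projection of $\zeta$ onto $\theta_\perp=\{\eta\,|\,\eta\cdot\theta=0\}$, this is solvable precisely when $\hat\theta$ is colinear with that projection — which is the stated description of the domain of $C^{-1}$. (Recall also that $\hat\theta$ is automatically conormal to $S^{n-1}$ at $\theta$, hence $\hat\theta\cdot\theta=0$, which is consistent with $\zeta-(\zeta\cdot\theta)\theta\in\theta_\perp$.)

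Next I would split into the two cases according to whether $\theta\parallel\zeta$. If $\theta\not\parallel\zeta$, then $\zeta-(\zeta\cdot\theta)\theta\neq0$ (using $|\theta|=1$), so the equation $\hat\theta=t(\zeta-(\zeta\cdot\theta)\theta)$ has a \emph{unique} solution $t$, and then $(t,x,\tau,\xi)=(t,\,z+t\theta,\,-\theta\cdot\zeta,\,\zeta)$ is the unique preimage; this gives the displayed formula for $C^{-1}(z,\theta,\zeta,\hat\theta)$. Moreover $(\tau,\xi)=(-\theta\cdot\zeta,\zeta)$ satisfies $|\tau|=|\theta\cdot\zeta|<|\zeta|=|\xi|$ by Cauchy--Schwarz with strict inequality (equality would force $\theta\parallel\zeta$), so this covector is spacelike; hence such points land in $\Sigma_s$. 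If instead $\theta\parallel\zeta$, say $\zeta=c\theta$ with $c\neq0$, then the projection of $\zeta$ onto $\theta_\perp$ vanishes, so solvability forces $\hat\theta=0$; the equation for $t$ then holds for every $t\in\R$, and the preimage is the whole lightlike line $\{(t,z+t\theta,-c,c\theta)\,|\,t\in\R\}$, with $(\tau,\xi)=(-c,c\theta)$ satisfying $|\tau|=|c|=|\xi|$, i.e.\ lightlike. This identifies exactly the points mapped to $\Sigma_l$ and shows the range of $C^{-1}$ is contained in $\Sigma_s\cup\Sigma_l$; conversely, Lemma \ref{lem_C_min_1} already records that $\pi_M$ is surjective onto $\Sigma_s$ and that every lightlike $(\tau,\xi)$ is hit, so the range equals $\Sigma_s\cup\Sigma_l$.

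There is no real obstacle here: the lemma is a bookkeeping consequence of \r{C0} and the elementary linear algebra of the map $t\mapsto t(\zeta-(\zeta\cdot\theta)\theta)$ together with the Cauchy--Schwarz dichotomy $|\theta\cdot\zeta|\le|\zeta|$ with equality iff $\theta\parallel\zeta$. The only points that deserve a sentence of care are (i) checking that the constraint on $\hat\theta$ extracted from the $\hat\theta$-equation is exactly "colinear with the projection of $\zeta$ to $\theta_\perp$", including the degenerate subcase where that projection is zero and the constraint degenerates to $\hat\theta=0$; and (ii) noting that in the lightlike case the fiber of $C^{-1}$ is one-dimensional (the line), so $C^{-1}$ is genuinely multivalued there — consistent with the earlier discussion that $L_\kappa$ loses information near $\Sigma_l$. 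I would also remark, for consistency with the statement, that in the displayed formula "$\xi$" in "$\hat\theta = t(\xi-(\xi\cdot\theta)\theta)$" is to be read as $\zeta$, since $\xi=\zeta$ on $C$.
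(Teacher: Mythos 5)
Your proof is correct and follows essentially the same route as the paper, which derives these facts in the running discussion preceding the lemma rather than in a formal proof environment: both read off the defining equations of $C$ from \r{C0}, reduce the question of solvability to the one scalar equation $\hat\theta = t\bigl(\zeta-(\zeta\cdot\theta)\theta\bigr)$, and use the Cauchy--Schwarz dichotomy $|\theta\cdot\zeta|\le|\zeta|$ with equality iff $\theta\parallel\zeta$ to separate the spacelike and lightlike cases. Your additional observations (that $\zeta=0$ would force $\hat\theta=0$ and hence is excluded, and that the fiber over $\Sigma_l$ is a line so $C^{-1}$ is genuinely multivalued there) are correct and consistent with the paper's Lemma~\ref{lem_C_min_1}.
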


When $n=2$, the colinearity condition is automatically satisfied. Indeed, the space $\theta_\perp$ is one dimensional then and therefore any $\hat \theta \in T_\theta^* S^1 \cong \theta_\perp$ is colinear with the projection of  $\zeta$ to $\theta_\perp$. 
When $n\ge3$, unlike $C$, the relation $C^{-1}$ is a map away from $\theta\parallel\zeta$. It is not injective by Lemma~\ref{lem_C_min_1}. 

Most importantly for the purposes of the present paper, the composition $C^{-1} \circ C$ reduces to the identity on $\Sigma_s$. This can be deduced directly from Lemma~\ref{lem_pr_min}, as will be done in the proof of Lemma~\ref{lem_CC_comp} in the general Lorentzian context, however, we will give here a proof based on Lemmas \ref{lem_C_min_1} and \ref{lem_C_min_2}.

\begin{lemma}\label{lem_CC_comp_min}
For every $(t,x,\tau, \xi)\in \Sigma_s$ it holds that  
$(C^{-1} \circ C)(t,x,\tau, \xi) = (t,x,\tau,\xi)$.
\end{lemma}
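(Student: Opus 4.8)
The plan is to compose the two relations explicitly, using the descriptions of $C$ in Lemma~\ref{lem_C_min_1} and of $C^{-1}$ in Lemma~\ref{lem_C_min_2}, and to check that everything cancels. Fix $(t,x,\tau,\xi)\in\Sigma_s$, so that $|\tau|<|\xi|$. By Lemma~\ref{lem_C_min_1}, the set $C(t,x,\tau,\xi)$ consists of the points $(x-t\theta,\theta,\xi,\,t(\xi-(\xi\cdot\theta)\theta))$ as $\theta$ ranges over the solutions in $S^{n-1}$ of $\tau+\theta\cdot\xi=0$.

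The first point I would record is that for every such $\theta$ one has $\theta\not\hspace{.14em}\parallel\xi$: indeed $|\theta\cdot\xi|=|\tau|<|\xi|=|\theta|\,|\xi|$, so Cauchy--Schwarz is strict and $\theta,\xi$ are linearly independent, equivalently $\xi-(\xi\cdot\theta)\theta\neq0$. Consequently each image point $(z,\theta,\zeta,\hat\theta)$, with $z=x-t\theta$, $\zeta=\xi$, $\hat\theta=t(\xi-(\xi\cdot\theta)\theta)$, lies in the part of the domain of $C^{-1}$ where $\theta\not\hspace{.14em}\parallel\zeta$: it is in the domain because $\hat\theta$ is $t$ times the projection $\xi-(\xi\cdot\theta)\theta$ of $\zeta=\xi$ onto $\theta_\perp$, hence colinear with it, and on this part $C^{-1}$ is single-valued by Lemma~\ref{lem_C_min_2}. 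Now apply the formula of Lemma~\ref{lem_C_min_2}: $C^{-1}(z,\theta,\zeta,\hat\theta)=(t',z+t'\theta,-\theta\cdot\zeta,\zeta)$, where $t'$ is the unique solution of $\hat\theta=t'(\zeta-(\zeta\cdot\theta)\theta)$. Since $\zeta-(\zeta\cdot\theta)\theta=\xi-(\xi\cdot\theta)\theta\neq0$ and $\hat\theta=t(\xi-(\xi\cdot\theta)\theta)$, uniqueness forces $t'=t$; then $z+t'\theta=x-t\theta+t\theta=x$, $-\theta\cdot\zeta=-\theta\cdot\xi=\tau$, and $\zeta=\xi$. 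Hence $C^{-1}$ sends this image point back to $(t,x,\tau,\xi)$.

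Since this holds for every admissible $\theta$, the set $(C^{-1}\circ C)(t,x,\tau,\xi)$ is exactly $\{(t,x,\tau,\xi)\}$, which is the claim. There is no genuine obstacle: the only thing to watch is the strict Cauchy--Schwarz inequality on the spacelike cone, which is precisely what makes $\xi-(\xi\cdot\theta)\theta$ nonzero, and this single fact both keeps us off the bad locus $\theta\parallel\zeta$ of $C^{-1}$ and guarantees the uniqueness of $t'$. (This is the same mechanism that will underlie the general Lorentzian statement in Lemma~\ref{lem_CC_comp}, with the nonvanishing of the relevant projected Jacobi field playing the role of $\xi-(\xi\cdot\theta)\theta\neq0$, cf. Lemma~\ref{lem_lightlike}.)
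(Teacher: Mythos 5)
Your proposal is correct and follows the same route as the paper's proof, which simply cites Lemma~\ref{lem_C_min_1} to write out $C(t,x,\tau,\xi)$ and Lemma~\ref{lem_C_min_2} to send each image point back to $(t,x,\tau,\xi)$. The only difference is that you spell out the verifications the paper leaves implicit (that each image point lies in the domain of $C^{-1}$ away from $\theta\parallel\zeta$, and that the uniqueness of $t'$ forces $t'=t$), with the strict Cauchy--Schwarz inequality on $\Sigma_s$ correctly identified as the mechanism.
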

\begin{proof}
From Lemma~\ref{lem_C_min_1},
\[
C(t,x,\tau, \xi) = \{ (z,\theta,\xi,\hat \theta)\,|\, z = x-t\theta,\ \hat \theta = t(\xi-(\xi\cdot\theta) \theta),\ \theta \in S^{n-1},\ -\theta \cdot \xi = -\tau\},
\]
and from Lemma~\ref{lem_C_min_2}, for $(z,\theta,\xi,\hat \theta) \in C(t,x,\tau, \xi)$,
\[
C^{-1}(z,\theta,\xi,\hat \theta) = (t,x,\tau,\xi).
\]
\end{proof}

\subsection{Recovery of spacelike singularities}

Lemma~\ref{lem_CC_comp_min} suggests that the composition of $L_\kappa$ with its transpose $L_\kappa'$ could be a pseudodifferential operator when restricted on $\Sigma_s$. On the other hand, by
Propositions~\ref{lem_C_min_1} and \ref{lem_C_min_2}, $C$ maps the lightlike cone to $\{\zeta\parallel\theta\}$, and $C^{-1}$ maps the latter to the former. As anticipated above, this suggests that  we could cut the data $L_\kappa f$ microlocally near $\{\zeta\parallel\theta\}$ to apply a cutoff to $\WF(f)$ near $\Sigma_l$. This is not an automatic application of Egorov's theorem however because $C$ and $C^{-1}$ are singular near the lightlike cone (and its image under $C$) and $L_\kappa$ is not a classical FIO there, in sense that the associated canonical relation is not a canonical graph. Next theorem gives local recovery of space like singularities from local data. It is similar to Proposition~{11.4} in our previous paper \cite{LOSU-strings}. 

\begin{theorem} \label{th_rec_min}
Let $Q=q(z,\theta,D_z)$ be a \PDO\ in $\mathcal M$ with a symbol $q(z,\theta,\zeta)$ of order zero (independent of $\hat\theta$) supported in $\{|\theta\cdot\zeta|<|\zeta|\}$. Then $L_\kappa 'QL_\kappa$ is a \PDO\ in $M$ of order $-1$   with essential support in the spacelike cone.  

Suppose, moreover, that $\kappa$ is nowhere vanishing.
Let $U \subset \R^n \times S^{n-1}$ be a neighborhood of $(z_0, \theta_0) \in \R^n \times S^{n-1}$, and let $(t_0, x_0, \tau^0, \xi^0) \in \Sigma_s \cap N^* \ell_{z_0, \theta_0}$. Then $Q$ can be chosen so that its essential support is contained also in $U \times (\R^n \setminus 0)$ and that $L_\kappa' Q L_\kappa$ is elliptic at $(t_0, x_0, \tau^0, \xi^0)$.
\end{theorem}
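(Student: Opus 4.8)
The plan is to realize $L_\kappa' Q L_\kappa$ as a composition of Fourier integral operators and show that, thanks to the cutoff $Q$ localizing away from the lightlike cone, the relevant canonical relations compose cleanly to the diagonal. First I would record that $L_\kappa$ is an FIO of order $-n/4$ with canonical relation $C = N^*X'$ as in \r{C0}, and that $Q$ is a $\Psi$DO whose wavefront-relation is the diagonal of $T^*\mathcal{M}$ restricted to its essential support, which lies in $\{|\theta\cdot\zeta|<|\zeta|\}$; hence $Q L_\kappa$ is again an FIO with canonical relation $C$ but with $\mathrm{WF}'$ shrunk so that the $\mathcal{M}$-side codirection $(\zeta,\hat\theta)$ satisfies $\zeta$ spacelike-type, i.e.\ $|\theta\cdot\zeta|<|\zeta|$. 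By Lemma \ref{lem_C_min_2} the range of $C^{-1}$ restricted to this set lies in $\Sigma_s$ (we have excised exactly the points with $\theta\parallel\zeta$ that map to $\Sigma_l$), and by Lemma \ref{lem_pr_min} the projection $\pi_{\mathcal M}$ is an injective immersion there while $\pi_M$ is a submersion; thus over the essential support of $Q$ the relation $C$ is, microlocally on the $M$-side over $\Sigma_s$, the graph of a canonical map composed with the "blow-down" fibers of $\pi_M$. The transpose $L_\kappa'$ is an FIO with canonical relation $C^{-1}$ (same order $-n/4$).

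Next I would carry out the clean-intersection/composition calculus for $C^{-1}\circ (\text{graph of }Q)\circ C$. The key point is Lemma \ref{lem_CC_comp_min} (equivalently Lemma \ref{lem_CC_comp} in the Lorentzian section): $C^{-1}\circ C$ is the identity on $\Sigma_s$. To upgrade this set-theoretic statement to an operator statement I would check that the composition is \emph{transversal} (or at least clean with excess zero) over $\Sigma_s$: given $(t,x,\tau,\xi)\in\Sigma_s$, the fiber $\pi_M^{-1}(t,x,\tau,\xi)\subset C$ is (for $n\ge 3$) an $(n-2)$-sphere's worth of $\theta$, but $\pi_{\mathcal M}$ is injective on this fiber by Lemma \ref{lem_pr_min}, and then $C^{-1}$ sends each image point back to the \emph{same} $(t,x,\tau,\xi)$; the differential computation — that the tangent spaces $T(C^{-1})$ and $T(C)$ meet in $T(\text{diagonal})$ plus a complementary transversal piece — is exactly the content needed, and it follows from $\d\pi_M$ being surjective and $\d\pi_{\mathcal M}$ being injective along the relevant fibers. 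Granting this, the composition is an FIO associated to the diagonal $\Delta\subset T^*M\times T^*M$, i.e.\ a $\Psi$DO, with order $(-n/4)+(-n/4)$ plus a correction from the fiber dimension of the composition; the bookkeeping should reproduce order $-1$, matching Theorem~\ref{thm_M}(b) in the unweighted global case. Its essential support is contained in $\pi_M(\text{ess supp of the cut-down } C)\subset\Sigma_s$, giving the first assertion.

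For the ellipticity statement I would compute the principal symbol of $L_\kappa' Q L_\kappa$ at $(t_0,x_0,\tau^0,\xi^0)$ by the stationary-phase/transport formula for composed FIOs: it is an integral over the composition fiber $\pi_M^{-1}(t_0,x_0,\tau^0,\xi^0)$ (a piece of the $\theta$-sphere cut out by $q\ne 0$) of $|\kappa|^2$ times $q$ times a nonvanishing density coming from the half-density/Maslov factors and the Jacobians of $\pi_M,\pi_{\mathcal M}$ — all of which are nonzero on $\Sigma_s$ by Lemma~\ref{lem_pr_min}. Since $(t_0,x_0,\tau^0,\xi^0)\in\Sigma_s\cap N^*\ell_{z_0,\theta_0}$, the point $(z_0,\theta_0)$ lies on the composition fiber, so choosing $q$ to be a nonnegative symbol, identically $1$ near $\pi_{\mathcal M}$ of that fiber point and supported in a small enough conic neighborhood inside $U\times(\R^n\setminus 0)\cap\{|\theta\cdot\zeta|<|\zeta|\}$, makes the symbol integral strictly positive at $(t_0,x_0,\tau^0,\xi^0)$; since $\kappa$ is nowhere vanishing the integrand has constant sign, so no cancellation occurs. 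This yields ellipticity there, and hence recovery of $\mathrm{WF}(f)$ near $(t_0,x_0,\tau^0,\xi^0)$ by standard elliptic $\Psi$DO microlocal inversion.

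The main obstacle I expect is the composition/clean-intersection verification near the lightlike cone: $C$ is not a canonical graph (for $n\ge 3$ its $\pi_M$ has $(n-2)$-dimensional fibers) and both $C$ and $C^{-1}$ genuinely degenerate as one approaches $\Sigma_l$, so one cannot simply invoke Egorov's theorem. The content of the cutoff $Q$ is precisely to excise a conic neighborhood of $\{\theta\parallel\zeta\}$ on $\mathcal M$ — and one must be careful that this indeed removes a full conic neighborhood of the preimage of $\Sigma_l$ under $C$, uniformly, so that on what remains the intersection is clean with the expected (constant) excess. Handling the wavefront-set bookkeeping for distributions $f$ with singularities possibly \emph{on} $\Sigma_l$ — ensuring $Q L_\kappa f$ still makes sense and that the composition theorem applies without spurious contributions — is where the care is needed; I would address it by working entirely on the $M$-side over a conically compact $K\Subset\Sigma_s$ and choosing the essential support of $Q$ small enough that $C^{-1}(\text{ess supp }Q)\Subset\Sigma_s$, which is possible exactly because the bad set $\{\theta\parallel\zeta\}$ maps to $\Sigma_l$ and the map $C^{-1}$ is continuous on the complement.
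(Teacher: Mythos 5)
Your proposal follows essentially the same route as the paper's proof: excise a conic neighborhood of $\Sigma_l$ via the cutoff $Q$ so that $QL_\kappa$ has essential support in $\Sigma_s$, apply the clean intersection calculus (transversal for $n=2$, clean with excess $n-2$ for $n\ge3$) using Lemma~\ref{lem_CC_comp_min}, and obtain ellipticity from the fiber integral of $\kappa^2\,\sigma[Q]$ by choosing $\sigma[Q]\ge 0$ nonvanishing at the image point $(z_0,\theta_0,\zeta^0,\hat\theta^0)\in C(t_0,x_0,\tau^0,\xi^0)$, which the spacelike condition shows lies in $\{|\theta\cdot\zeta|<|\zeta|\}$. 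The paper's Section~\ref{sec_clean_intersection} carries out the clean-intersection verification and the excess bookkeeping $2m+e/2=-1$ that you sketch, but the architecture is identical.
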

\begin{proof}
The Schwartz kernel $\mathcal{L}_\kappa$ of $L_\kappa$ has a wave front set $N^*Z$ and $C$ is its relation, see also \r{WF}. We can always assume that the essential support $\text{ess-sup}( Q)$ of $Q$ is conically compact. 
The twisted wave front set of the Schwartz kernel of $Q$ as a relation is identity restricted to $\text{ess-sup}( Q)$. Then its composition with the relation $C$ is $C$ again with its image restricted by $Q$ to $\text{ess-sup}( Q)$ which is contained in 
the conic set $\{|\theta\cdot\zeta|<|\zeta|\}$. By \r{C0}, this implies $|\tau|<|\xi|$ near the wave front of the kernel of $QL_\kappa$. Therefore, $QL_\kappa$ is smoothing in a conic neighborhood of  $\Sigma_l\cup\Sigma_t$, and so is $L_\kappa'QL_\kappa$. 

The composition $L_\kappa'QL_\kappa$ can be analyzed by using the
the transversal intersection calculus in the case $n=2$, and 
the clean intersection calculus in the case $n=3$.
As the composition $C^{-1} \circ C$ is the identity on $\Sigma_s$, the calculi imply that $L_\kappa'QL_\kappa$ is a \PDO\ of order $-1$.
We will focus on the more complicated case $n \ge 3$, and justify the application of the clean intersection calculus in the next section.

Writing $\sigma[\cdot]$ for the principal symbol map, it holds that $\sigma[L_\kappa'QL_\kappa]$ is obtained from $\sigma[L_\kappa'] = \kappa$, $\sigma[Q]$ and $\sigma[L_\kappa] = \kappa$ by an integration reducing the excess, see \cite[Theorem~25.2.3]{Hormander3}.
We will choose $Q$ so that $\sigma[Q]$ is non-negative.
As $\kappa$ is nowhere vanishing,  $\sigma[L_\kappa'QL_\kappa]$ is positive at $(t_0, x_0, \tau^0, \xi^0)$ if and only if the integral of $\sigma[Q]$ does not vanish over the fiber $C(t_0, x_0, \tau^0, \xi^0)$.

We set $\zeta^0 = \xi^0$ and choose $Q$ so that $\sigma[Q](z_0, \theta_0, \zeta_0) > 0$. It holds that 
$$
(z_0, \theta_0, \zeta^0, \hat \theta^0) \in C(t_0, x_0, \tau^0, \xi^0),
$$
where $\hat \theta^0 = t_0 (\xi_0 - (\xi_0 \cdot \theta_0)\theta_0)$.
Indeed, this follows from Lemma~\ref{lem_C_min_1} since the assumption $(t_0, x_0, \tau^0, \xi^0) \in N^* \ell_{z_0, \theta_0}$ implies that $x_0 = z_0 + t_0 \theta_0$
and $(\tau^0, \xi^0) \cdot (1,\theta_0) = 0$.
Therefore, $\sigma[Q]$ does not vanish identically on $C(t_0, x_0, \tau^0, \xi^0)$.

It still remains to show that the choice $\sigma[Q](z_0, \theta_0, \zeta_0) > 0$ is compatible with the requirement that $\text{ess-sup}(Q) \subset \{|\theta\cdot\zeta|<|\zeta|\}$. 
This follows, since together with $\zeta^0 = \xi^0$ and $(t_0, x_0, \tau^0, \xi^0) \in \Sigma_s$, the orthogonality $(\tau^0, \xi^0) \cdot (1,\theta_0) = 0$ implies that 
$$
|\theta_0 \cdot \zeta^0| = |\tau^0| < |\xi^0| = |\zeta^0|.
$$
\end{proof}

As a corollary, we have the following global result saying that the space like singularities can be recovered.

\begin{corollary}
Let $L_\kappa f\in C^\infty(\mathcal M)$ and assume that $\kappa$ vanishes nowhere. Then it holds that $\WF(f)\cap \Sigma_s=\emptyset$. 
\end{corollary}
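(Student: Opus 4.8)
The plan is to combine Theorem~\ref{th_rec_min} with a standard microlocal covering and elliptic-regularity argument. First I would fix an arbitrary covector $(t_0,x_0,\tau^0,\xi^0)\in\WF(f)\cap\Sigma_s$ and derive a contradiction. Since $(\tau^0,\xi^0)$ is spacelike, there exists a future-pointing lightlike line $\ell_{z_0,\theta_0}$ through $(t_0,x_0)$ with $(\tau^0,\xi^0)$ conormal to it; concretely, we need $\theta_0\in S^{n-1}$ with $(\tau^0,\xi^0)\cdot(1,\theta_0)=0$ and then set $z_0=x_0-t_0\theta_0$. Such a $\theta_0$ exists precisely because $|\tau^0|<|\xi^0|$ (by Lemma~\ref{lem_C_min_1}, or directly the intersection of the unit sphere with the hyperplane $\theta\cdot\xi^0=-\tau^0$ is nonempty, in fact an $(n-2)$-sphere). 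Thus $(t_0,x_0,\tau^0,\xi^0)\in\Sigma_s\cap N^*\ell_{z_0,\theta_0}$, which is exactly the hypothesis needed to invoke the second part of Theorem~\ref{th_rec_min}.

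Next I would apply Theorem~\ref{th_rec_min}: since $\kappa$ vanishes nowhere, there is a \PDO\ $Q=q(z,\theta,D_z)$ of order zero on $\mathcal M$, with symbol supported in $\{|\theta\cdot\zeta|<|\zeta|\}$ (and, if desired, with essential support also inside a chosen neighborhood of $(z_0,\theta_0)$), such that $N:=L_\kappa'QL_\kappa$ is a \PDO\ of order $-1$ on $M$ which is elliptic at $(t_0,x_0,\tau^0,\xi^0)$. Now the hypothesis $L_\kappa f\in C^\infty(\mathcal M)$ gives $QL_\kappa f\in C^\infty(\mathcal M)$, hence $Nf=L_\kappa'(QL_\kappa f)\in C^\infty(M)$ since $L_\kappa'$ is an FIO (indeed the transpose of one) and maps smooth functions to smooth functions. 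Because $N$ is a \PDO\ elliptic at $(t_0,x_0,\tau^0,\xi^0)$, microlocal elliptic regularity yields
\[
(t_0,x_0,\tau^0,\xi^0)\notin\WF(f),
\]
contradicting the choice of $(t_0,x_0,\tau^0,\xi^0)$. Since the covector was an arbitrary point of $\WF(f)\cap\Sigma_s$, this forces $\WF(f)\cap\Sigma_s=\emptyset$.

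One technical point to verify is that $f$ is a distribution for which $L_\kappa f$ is well-defined and for which the FIO $L_\kappa$ and its transpose act as expected; this is built into the setup (the corollary is stated for such $f$, with $L_\kappa$ defined on $\mathcal E'(\R^{1+n})$ by duality via \r{X_Light_L'}), so no extra work is needed beyond citing it. I do not expect any genuine obstacle here: the entire content is in Theorem~\ref{th_rec_min}, and the corollary is a one-line consequence via elliptic regularity once the geometric observation (every spacelike covector is conormal to some future-pointing lightlike line) is recorded. The only mild subtlety is making sure the conclusion is uniform over all of $\Sigma_s$: this is automatic because the argument applies verbatim to each individual covector, and $\WF(f)$ being disjoint from $\Sigma_s$ is a pointwise statement.
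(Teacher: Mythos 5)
Your proof is correct and follows the same approach as the paper's (which is in fact stated even more tersely than yours): pick any spacelike covector, find a lightlike line through its base point to which it is conormal, invoke Theorem~\ref{th_rec_min}, and conclude by microlocal elliptic regularity. You have just filled in the details that the paper leaves implicit, including the existence of the required lightlike line and the role of $L_\kappa'$.
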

\begin{proof}
For any $(t_0, x_0, \tau^0, \xi^0) \in \Sigma_s$ we can choose a lightlike line $\ell_{z_0, \theta_0}$ such that $(t_0, x_0, \tau^0, \xi^0)$ is in $N^* \ell_{z_0, \theta_0}$.
Then the previous corollary implies that $(t_0, x_0, \tau^0, \xi^0) \notin \WF(f)$.
\end{proof}

By combining Theorem~\ref{th_rec_min} with a microlocal partition of unity, we can recover, not only $\WF(f)$, but a smoothened version of $f$ with the singularities cut off (in a smooth way) in any predetermined neighborhood of $\Sigma_t\cup\Sigma_l$. This can be viewed as a regularized inversion of $L_\kappa$ with the regularization cutting away from  the ill posed region $\Sigma_t$ and its boundary $\Sigma_l$. 

Let us also give a more explicit construction as follows. We can choose  $\phi \in C_0^\infty(\R)$ such that $\phi=1$ on $[0,1-\eps]$ and $\phi=0$ on $[1-\eps/2,\infty)$. Let $Q$ be the zeroth order \PDO\ with  symbol $\phi(|\theta\cdot\zeta|/|\zeta|)$ cut off smoothly near the origin (which is actually not needed). Then $L_\kappa'QL_\kappa$ is a \PDO\ of order $-1$, elliptic away from a neighborhood of $\Sigma_t \cup \Sigma_l$ determined by $\epsilon$. 
When $\kappa=1$,   one can compute $L'QL$ directly. Since $|\theta\cdot\zeta|/|\zeta|$ is a Fourier multiplier w.r.t.\ $z$ only, it is enough to express $QLf$ by taking the Fourier transform of $Lf$ w.r.t.\ $z$ only. Then from \r{LL} we get
\[
L'QL = \mathcal{F}^{-1}\frac{\phi(|\tau|/|\xi|)}{|\xi|}   \left(|1-\tau^2/|\xi|^2\right)^\frac{n-3}2 \mathcal{F}. 
\]
Therefore, 
\[
\phi(|D_t|/|D_x|) f =   |D_x|\left(1-D_t^2/|D_x|^2\right)_+^\frac2{n-3} L'QLf.
\]

\subsection{The clean intersection calculus}\label{sec_clean_intersection}

We assume that $n \ge 3$, and show here that the clean intersection calculus can be applied to $L'_\kappa Q L_\kappa$ in Theorem~\ref{th_rec_min}. 
The traditional formulation of this calculus considers the composition $A_1 A_2$ of two properly supported Fourier integral operators $A_1$ and $A_2$ such that the composition $C_1 \circ C_2$ of their canonical relations 
$$
C_1 \subset (T^*X \setminus 0) \times (T^*Y \setminus 0),
\quad
C_2 \subset (T^*Y \setminus 0) \times (T^*Z \setminus 0),
$$
is clean, proper and connected
\cite[Th. 25.2.3]{Hormander4}.
Here $X$, $Y$ and $Z$ are smooth manifolds.
The operators $A_1 = L_\kappa'$ and $A_2 = Q L_\kappa$ do not quite 
satisfy the assumptions of the calculus, since the composition $C^{-1} \circ C$ is clean only away from $\Sigma_l$.
Also, as a canonical relation, $C$ must be closed in $T^* (\mathcal M \times M) \setminus 0$, and we can not simply apply the calculus with $C$ replaced by $C \setminus (T^* \mathcal M \times \Sigma_l)$. 

The proof of \cite[Th. 25.2.3]{Hormander4} uses a microlocal partition of unity, subordinate to a cover $\Gamma_j$, $j=1,2,\dots$, of the intersection $\X \cap \Y$ where
$$
\X = C_1 \times C_2, \quad \Y = T^* X \times \diag(T^* Y) \times T^* Z,
$$
and $\diag(T^* Y) = \{(p,p);\ p \in T^* Y\}$. We write $K_1$ for the Schwartz kernel of $A_1$, and recall that the essential support of $A_1$ is given by 
$$
\text{ess-sup}(A_1) = \WF'(K_1) = \{(x,\xi,y,-\eta)\,|\, (x,\xi,y,\eta) \in \WF(K_1)\}.
$$
For the local step of the proof, it is enough to assume that the composition $C_1 \circ C_2$ is clean in each $\Gamma_j$ that intersect the product $\text{ess-sup}(A_1) \times \text{ess-sup}(A_2)$. 
The composition $C_1 \circ C_2$ being clean in $\Gamma_j$ means that 
$\X \cap \Y \cap \Gamma_j$ is a smooth manifold and that 
\begin{align}
\label{clean_intersection_Tp}
T_p (\X \cap \Y) = T_p \X \cap T_p \Y, \quad p \in \X \cap \Y \cap \Gamma_j.
\end{align}

The local step implies that $(C_1 \circ C_2)'$ is locally a conic  Lagrangian manifold, however, global assumptions are needed, for example, to guarantee that it does not have self-intersections.  
The assumptions that $C_1 \circ C_2$ is proper and connected are used in the proof \cite[Th. 25.2.3]{Hormander4} to show that $C_1 \circ C_2$ is an embedded submanifold of $T^* (X \times Z) \setminus 0$, and closed as its subset. 

In our case,
$A_1 = L_\kappa'$ and $A_2 = Q L_\kappa$,
$$
\X = C^{-1} \times C, \quad \Y = T^* M \times \diag(T^* \mathcal M) \times T^* M,
$$
and $\text{ess-sup}(A_2) \subset \Sigma_s$ due to the microlocal cutoff $Q$. Thus we need to consider the condition (\ref{clean_intersection_Tp}) only for 
    \begin{align}\label{splike_cover}
\Gamma_j \subset T^* M \times \diag(T^* \mathcal M) \times \Sigma_s
    \end{align}
As for the global structure of $C^{-1} \circ C$, we already know that $A_1 A_2$ is smoothing in a conic neighborhood of  $\Sigma_l\cup\Sigma_t$, and that $C^{-1} \circ C$ is the identity on $\Sigma_s$. In particular, $C^{-1} \circ C$ is an embedded submanifold of $T^* (M \times M) \setminus 0$ in a neighborhood of $\text{ess-sup}(A_1 A_2)$, and closed as its subset. 

Let us now show that (\ref{clean_intersection_Tp}) holds for 
(\ref{splike_cover}). We write $\CC = \X \cap \Y \cap \Gamma_j$
and $\tilde C = C \cap \tilde \Gamma_j$,
where $\tilde \Gamma_j$ is the projection of $\Gamma_j$ on $T^* \mathcal M \times \Sigma_s$.
Also, we use $\cong$ to denote that two manifolds or vector spaces are isomorphic.  
Let 
$
(\tilde t, \tilde x, \tilde \tau, \tilde \xi; z, \theta, \zeta, \hat \theta; z, \theta, \zeta, \hat \theta;
t,x,\tau,\xi) \in \CC.
$
Since 
$$
(\tilde t, \tilde x, \tilde \tau, \tilde \xi) = C^{-1}(z, \theta, \zeta, \hat \theta) = (t,x,\tau,\xi)
$$
and $(\tau, \xi)$ is spacelike, Lemma \ref{lem_C_min_2} implies that $(\tilde t, \tilde x, \tilde \tau, \tilde \xi) = (t,x,\tau,\xi)$.
This again implies that $\CC \cong \diag(\tilde C) \cong \tilde C$, in particular, $\CC$ is a smooth manifold.
Moreover, $\X \cong C^2$.
Let $p \in \tilde C$ and observe that for $(\delta p, \delta q)$ in $T_{(p,p)} \X \cong T_p C \times T_p C$ it holds that $(\delta p, \delta q) \in T_{(p,p)} \Y$ if and only if 
$$
\d \pi_{\mathcal M} \delta p = \d \pi_{\mathcal M} \delta q.
$$
Since $\d \pi_{\mathcal M}$ is injective (again due to $(\tau, \xi)$ being spacelike), we have $\delta p = \delta q$ for all $(\delta p, \delta q)$ in $T_{(p,p)} \X \cap T_{(p,p)} \Y$. Therefore 
$$
T_{(p,p)} \X \cap T_{(p,p)} \Y \cong T_p \tilde C \cong T_{(p,p)} \CC.
$$
Keeping track of the diffeomorphisms used above, this shows (\ref{clean_intersection_Tp}). We have shown that the clean intersection calculus applies, and therefore $L_\kappa' Q L_\kappa$ is a pseudodifferential operator.

To establish that $L'_\kappa Q L_\kappa$ has order $-1$, we need to verify also that the order $m = -n/4$ of $L_\kappa$ and the excess $e$ of the clean intersection satisfy
$2m + e/2 = -1$. 
We write 
$$
\Pi : \CC \to T^* (M \times M) \setminus 0 
$$
for the natural projection, and $\CC_\gamma = \Pi^{-1}(\{\gamma\})$ for its fibers. The excess $e$ coincides with $\dim(\mathcal C_\gamma)$, and 
using again the identification $\CC \cong \tilde C$, we see that for all $\gamma \in \Pi(\CC)$ there is $(t,x,\tau,\xi) \in \Sigma_s$ such that 
$$
\CC_\gamma \cong 
\{(t,x,\tau,\xi; z, \theta, \zeta, \hat \theta);\ (z, \theta, \zeta, \hat \theta) \in C(t,x,\tau,\xi) \}
\cong C(t,x,\tau,\xi) \cong S^{n-2},
$$
where the last identification is given by part (b) of Lemma~\ref{lem_C_min_1}.
Hence $e = n-2$ and indeed $2m + e/2 = -1$. 

We remark that, in the context of \cite[Th. 25.2.3]{Hormander4}, the composition $C_1 \circ C_2$ being connected means that the fibers $\CC_\gamma$ are connected (when $\CC$ is taken to be the whole intersection $\X \cap \Y$). As we are assuming that $n > 2$, the fibers $\CC_\gamma$ are connected in our particular case. With a suitable cutoff, this can be arranged also in the more general Lorentzian context considered next, however, analogously to the above discussion, such connectedness is not essential. Even when not connected, the fibers $\CC_\gamma$ are smooth manifolds, since the projection $\Pi$ has constant rank by \cite[Th. 21.2.14]{Hormander3}.

\section{The Lorentzian case}
\label{sec_Lorentz}

Our aim is to prove an analogue of Theorem~\ref{th_rec_min} in a more general Lorentzian context. Toward this end, we will consider the light ray transform on a Lorentzian manifold $(M,g)$, localized near a lightlike geodesic segment $\gamma_0 : [0, \ell] \to M$, the analogue of $\ell_{z_0, \theta_0}$ in Theorem~\ref{th_rec_min}. 
We parameterize lightlike geodesics near $\gamma_0$
by choosing a spacelike hypersurface $H$ containing $\gamma_0(0)$ and semigeodesic coordinates
associated to $H$, 
    \begin{equation}\label{coords_tz}
(t,z) \in (-T, T) \times Z, \quad Z \subset \R^{n},
    \end{equation}
so that in the coordinates $H = \{t = 0\}$ and 
$g = -dt^2 + g'$, with $g'=g'(t,z)$ 
a Riemannian metric on $Z$ that depends smoothly on $t$. Moreover, the coordinates are chosen so that $\gamma_0(0) = 0$ and $\dot \gamma_0(0) = (1,\theta_0)$
where, writing $h = g'(0,\cdot)$, it holds that $(\theta_0,\theta_0)_{h} = 1$.
Then we choose local coordinates of the form 
    \begin{equation}\label{coords_za}
(z,a) \in Z \times A, \quad A \subset \R^{n-1},
    \end{equation}
on the unit sphere bundle $SZ$ with respect to $h$,
so that writing  
    \begin{align}\label{def_theta}
Z \times A \ni (z,a) \mapsto (z, \theta(z,a)) \in SZ
    \end{align}
for the coordinate map,
it holds that $\theta_0 = \theta(0,0)$. 
We write $\gamma_{z,a}$ for the geodesic $\gamma$ satisfying $\gamma(0) = (0,z)$ and $\dot \gamma(0) = (1,\theta(z,a))$, and use also the notation $\mathcal M = Z \times A$. 
Analogously to (\ref{g}), this parametrization gives the smooth manifold structure in the space of lightlike geodesics near $\gamma_0$. 

Let $\Omega\subset M$ be open and {\ntext relatively compact},
and suppose that the end points $\gamma_0(0)$ and $\gamma_0(\ell)$ are outside $\overline \Omega$. By making $Z$ and $A$ smaller, we suppose without loss of generality that the end points $\gamma_{z,a}(0)$ and $\gamma_{z,a}(\ell)$ are outside $\overline \Omega$ for all $(z,a) \in \overline{\mathcal M}$. 
In what follows we consider the local version of the light ray transform defined as follows
    \begin{align}\label{L_local}
L_\kappa f(\gamma) = \int_0^\ell \kappa(\gamma(s), \dot \gamma(s)) f(\gamma(s))\, \d s, \quad f \in C_0^\infty(\Omega),\ \gamma=\gamma_{z,a},\ (z,a) \in \mathcal M.
    \end{align}
Observe that, given a geodesic $\gamma : \R \to M$, the integral $\int_\R f(\gamma(s))\,\d s$ may not be well-defined even for $f \in C_0^\infty(\Omega)$ if $\gamma$ returns to $\Omega$ infinitely often. {\ntext We note that if $(M,g)$  is globally hyperbolic,  $L_\kappa f(\gamma)$ can be defined for
all  $f \in C_0^\infty(M)$. However, in this paper we} consider only the local version (\ref{L_local}) in order to avoid making global assumptions on $(M,g)$.

Note that the coordinates (\ref{coords_tz}) are valid locally only; and we cannot use them in our analysis of the contributions of possible conjugate points on the geodesics $\gamma_{z,a}$. They are used only to parametrize these geodesics.  
Moreover, the parametrization and, in particular, the normalization of $\dot \gamma(0)$, is not invariant. It depends on the choice of $H$ and the coordinates (\ref{coords_tz})--(\ref{coords_za}). On the other hand, if $\tilde H$ is another spacelike hypersurface intersecting $\gamma_0$, then the lightlike geodesic flow provides a natural map from $TH$ to $T\tilde H$, however, the projections of the tangents of the geodesics $\gamma_{z,a}$ onto $T\tilde H$ may not be of unit length. If the geodesics $\gamma_{z,a}$ are re-parameterized so that the projections are unit, then the weight $\kappa$ is multiplied by a smooth Jacobian. While this would change $L_\kappa$, it would not change its microlocal properties. We will use this fact later to choose $H$ in a convenient way.

\begin{figure}[!ht] 
  \centering
  \includegraphics[width = 2in,page=1]{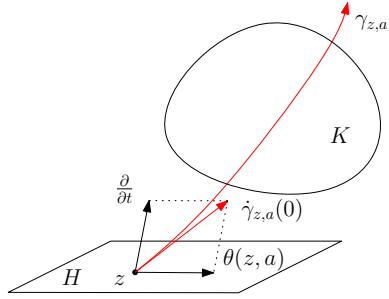}
  \caption{Parameterization of lightlike geodesics near $\gamma_0$.}
  \label{pic02}
\end{figure}
 
\subsection{Point-geodesic relation}

The point-geodesic relation
\be{Z}
X =  \left\{ (z,\a,x) \in \mathcal M \times \Omega|\; x=\gamma_{z,\a}(s)\;\text{for some $s \in (0,\ell)$} \right\}
\ee
is a smooth $2n$ dimensional submanifold  of the $3n$ dimensional $\mathcal M \times \Omega$, parameterized by 
the map $(z,\a,s)\mapsto(z,\a,  \gamma_{z,\a} (s) )$.
Writing $x = \gamma_{z,a}(s)$, this map  has differential
\[
 \begin{pmatrix} \Id & 0 &0\\0& \Id&0\\ 
\partial x/\partial z &\partial x/\partial \a & \dot \gamma_{z,\a} (s) 
\end{pmatrix}  \begin{pmatrix} \d z\\\d\a\\ \d s 
\end{pmatrix} 
\]
which has maximal rank $2n$. The conormal bundle $N^*X$ at any point is the space conormal to the range of that differential; that is, it is described by the kernel of its adjoint. Therefore, the canonical relation $C:=N^*X' \subset T^*(\mathcal M \times \Omega) \setminus 0$ is given by
\be{NZa}
\begin{split}
C &=  \Big\{  \big((z,\a,\zeta,\alpha) , (x, \xi )     \big)\big| \;  x=\gamma_{z,\a}(s),\,  \langle \xi, \dot\gamma_{z,\a}(s)\rangle =0,\,      \zeta_j=  \langle\xi,  \partial_{z^j}\gamma_{z,\a}(s)\rangle,      \\ &\qquad \; j=1,\dots,n, \, \alpha_k=  \langle\xi, \partial_{\a^k}\gamma_{z,\a}(s) \rangle,\ k=1,\dots,n-1,\ s \in (0,\ell) \Big\}.
\end{split}
\ee
Clearly $\zeta = 0$ and $\alpha = 0$ if $\xi = 0$.
It follows from Lemma \ref{lem_Jacobi} below that also the converse holds. Therefore $C$ is closed in $T^*(\mathcal M \times \Omega) \setminus 0$, and $L_\kappa$ is a Fourier integral operator.
The Schwartz kernel of $L_\kappa$ is a conormal distribution on $X$ with the (un-reduced) symbol $\kappa$, and by \cite[Th. 18.2.8]{Hormander3}, the order $m$ of $L_\kappa$ is satisfies  
$$
0 = m + (3n-2n)/4, \quad \text{that is} \quad m=-n/4.
$$

As in the Minkowski case, the covector $\xi$ must be lightlike or spacelike at $x$ as a consequence of $\langle \xi, \dot\gamma_{z,\a}(s)\rangle =0$. 
Relation \r{WF} holds  in this case as well and it shows that timelike singularities of $f$ do not affect 
$\WF(L_\kappa f)$, that is, they are invisible. 
Moreover, the dimensions of the manifolds in the diagram \r{4.2}  are unchanged from the Minkowski case.

The canonical relation $C$ is parameterized by 
\[
C_0=\{ (z,\a,s,\xi) \in \mathcal M \times (0,\ell) \times T^*_{\gamma_{z,a}(s)} \Omega\,|  \: \xi \ne 0, \langle \xi, \dot\gamma_{z,\a}(s)\rangle =0 \}.
\]
More precisely, $C_0$ is a $3n$-dimensional smooth manifold and, in view of the definition \r{NZa}, there is a diffeomorphism between $C_0$ and $C$.

\subsection{Variations of the geodesics $\gamma_{z,a}$}\label{sec_VG}
Let us consider the Jacobi fields associated to the variations through the geodesics $\gamma_{z,a}$, $(z,a) \in Z \times A$,
\be{MJ}
M_j(s;z,a)=  \partial_{z^j}\gamma_{z,\a}(s), \ j=1,\dots,n,\quad J_k(s;z,a) =  \partial_{a^k} \gamma_{z,\a} (s), \ k=1,\dots, n-1.
\ee
Observe that by \r{NZa}, it holds on $C$ that 
\be{Pi_M1}
\zeta_j=\langle\xi, M_j(s;z,a)\rangle,\quad\alpha_k= \langle\xi, J_k(s;z,a)\rangle,
\ee
that is, the dual variables $\zeta$ and $\alpha$  are given by projections of the Jacobi fields $M_j$ and $J_k$ to $\xi$. 

For a vector field $J$ along a curve $\gamma$,
we use the shorthand notation $ J'(s) = \nabla_s J(s)$ for  the covariant derivative $\nabla_s = \nabla_{\dot \gamma}$ along $\gamma$. We write also
$$
\dot\gamma(s)^\perp = \{v\in T_{\gamma(s)}M|\; (v,\dot\gamma(s))_g=0 \}.
$$
Since every Jacobi field along a null geodesic is a certain variation of the latter, the lemma below in particular characterizes the Cauchy data $(J,J')$ of such fields at any point. 

\begin{lemma}\label{lem_Jacobi}
Let $(z,a) \in \mathcal M$ and write $\gamma = \gamma_{z,a}$. Write $\Gamma(s) = s \dot\gamma(s)$, and 
consider the Jacobi fields 
$
\mathcal J := \spa \{ M_1, \dots, M_{n}, J_1, \dots, J_{n-1}, \dot \gamma, \Gamma\}
$
along $\gamma$. Then for any $s \in [0,\ell]$ it holds that 
$$
\{(J(s), J'(s))|\; J \in \mathcal J\}
= \{(V,W) \in (T_{\gamma(s)} M)^2 |\; W \in \dot\gamma(s)^\perp\}.
$$
In particular, $\{J(s)|\; J \in \mathcal J\} = T_{\gamma(s)} M$.
\end{lemma}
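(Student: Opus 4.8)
The plan is to count dimensions and then argue by surjectivity of a natural linear map. On the left-hand side, the space $\{(J(s),J'(s))\mid J\in\mathcal J\}$ is the image, under the linear evaluation map $J\mapsto (J(s),J'(s))$, of the finite-dimensional space $\mathcal J$ of Jacobi fields spanned by the $2n+1$ fields $M_1,\dots,M_n,J_1,\dots,J_{n-1},\dot\gamma,\Gamma$. Since a Jacobi field along $\gamma$ is uniquely determined by its Cauchy data $(J(s_0),J'(s_0))$ at a single point $s_0$, the evaluation map at $s$ is injective on the space of \emph{all} Jacobi fields; but here I only need it on $\mathcal J$, and in fact the whole point is to identify the image. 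The target on the right is $\{(V,W)\in (T_{\gamma(s)}M)^2\mid W\in\dot\gamma(s)^\perp\}$, which has dimension $(n+1)+n=2n+1$. So the two sides have matching dimension $2n+1$ provided the evaluation map is injective on $\mathcal J$, and it suffices to prove either inclusion together with this dimension count, or to exhibit that the image lands in the right-hand side and has full dimension $2n+1$.

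First I would check the inclusion $\subseteq$: every $J\in\mathcal J$ satisfies $J'(s)\in\dot\gamma(s)^\perp$. For a Jacobi field along \emph{any} geodesic, the quantity $(J',\dot\gamma)_g$ is affine in $s$ (its second derivative vanishes by the Jacobi equation and the fact that $\dot\gamma$ is parallel), so $(J',\dot\gamma)_g$ is actually constant in $s$ iff $(J'',\dot\gamma)=0$, which holds; hence $(J'(s),\dot\gamma(s))_g$ is independent of $s$. Thus it is enough to verify $(J'(0),\dot\gamma(0))_g=0$ for the spanning fields. For $\dot\gamma$ we have $J=\dot\gamma$, $J'=0$. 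For $\Gamma(s)=s\dot\gamma(s)$ we have $\Gamma'(s)=\dot\gamma(s)$, and $(\dot\gamma(s),\dot\gamma(s))_g=0$ because $\gamma$ is lightlike — this is exactly where lightlikeness enters. For $M_j=\partial_{z^j}\gamma_{z,a}$ and $J_k=\partial_{a^k}\gamma_{z,a}$, the variation field construction gives $M_j'(0)=\nabla_{\partial_{z^j}}\dot\gamma_{z,a}(0)=\nabla_{\partial_{z^j}}(1,\theta(z,a))$ and similarly for $J_k'(0)=\nabla_{\partial_{a^k}}(1,\theta(z,a))$; I need to compute these in the semigeodesic coordinates $g=-dt^2+g'(t,z)$ and check orthogonality to $\dot\gamma(0)=(1,\theta_0)$. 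Since $\theta(z,a)$ has unit length with respect to $h=g'(0,\cdot)$ and the $(1,\cdot)$-component is constant, differentiating $(\theta(z,a),\theta(z,a))_h=1$ should yield the needed orthogonality of the spatial parts, and the $-dt^2$ part contributes nothing because the time component does not vary — I expect a short computation with Christoffel symbols at $t=0$ to close this.

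The main step — and the main obstacle — is the reverse inclusion $\supseteq$, equivalently surjectivity, equivalently injectivity of evaluation on $\mathcal J$ (then the dimension count finishes it). For this I would evaluate at $s=0$, where things are most explicit: $M_j(0)=\partial_{z^j}(0,z)=(0,e_j)$ spans the spatial tangent space $\{0\}\times\R^n$, and $\dot\gamma(0)=(1,\theta_0)$ supplies the missing direction, so $\{J(0)\mid J\in\mathcal J\}=T_{\gamma(0)}M$ already from $M_1,\dots,M_n,\dot\gamma$ alone. For the derivative slot: modulo the fields already used, I want $J_1'(0),\dots,J_{n-1}'(0)$ together with $\Gamma'(0)=\dot\gamma(0)$ to span $\dot\gamma(0)^\perp$ (dimension $n$). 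The $J_k'(0)$ are the covariant derivatives of $a\mapsto\theta(0,a)$ at $a=0$, i.e.\ they span the tangent space to the unit sphere $SZ_0\subset T_0Z$ at $\theta_0$, which is the $h$-orthogonal complement of $\theta_0$ inside $\{0\}\times\R^n$, of dimension $n-1$; adding $\dot\gamma(0)=(1,\theta_0)$ gives an $n$-dimensional space, and one checks this space is exactly $\dot\gamma(0)^\perp$ (it contains $(1,\theta_0)$, which is null hence self-orthogonal, and contains the $h$-orthogonal complement of $\theta_0$, which is $g$-orthogonal to $(1,\theta_0)$ since the $t$-components don't interact). The subtlety to be careful about is that the $M_j$ also contribute to the derivative slot via $M_j'(0)$, and the $J_k$ to the value slot via $J_k(0)$; but since $J_k(0)=\partial_{a^k}(0,z)=0$, the $J_k$ contribute nothing to values, and the count is clean. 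Assembling: evaluation maps the $(2n+1)$-dimensional $\mathcal J$ onto a space containing both a full $T_{\gamma(0)}M$ in the value slot and a full $\dot\gamma(0)^\perp$ in the derivative slot, with total image dimension $2n+1$; hence it is a bijection onto the right-hand side at $s=0$. Finally, to pass from $s=0$ to arbitrary $s\in[0,\ell]$, note that the Jacobi-field-to-Cauchy-data correspondence at $s$ is a linear isomorphism from the space of all Jacobi fields to $(T_{\gamma(s)}M)^2$, and the ``$W\in\dot\gamma(s)^\perp$'' condition is preserved along $\gamma$ by the argument in the previous paragraph; so the image of $\mathcal J$ at $s$ is a $(2n+1)$-dimensional subspace of the $(2n+1)$-dimensional space $\{(V,W)\mid W\in\dot\gamma(s)^\perp\}$ and therefore equals it. The last assertion $\{J(s)\mid J\in\mathcal J\}=T_{\gamma(s)}M$ is then immediate by projecting onto the first factor.
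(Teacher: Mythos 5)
Your proof follows essentially the same route as the paper: establish the inclusion $\subseteq$ by noting that $(J',\dot\gamma)_g$ is constant along $\gamma$ and then verifying $(J'(0),\dot\gamma(0))_g=0$ for each spanning field from the explicit Cauchy data at $s=0$ (the key being $(\nabla_{z^j}\theta,\theta)_h=\tfrac12\partial_{z^j}(\theta,\theta)_h=0$, and $(\dot\gamma,\dot\gamma)_g=0$ for $\Gamma$), then show the Cauchy data of the $2n+1$ spanning fields are linearly independent, and conclude by a dimension count since both sides have dimension $2n+1$. The only differences are cosmetic: your discussion of ``affine in $s$'' momentarily conflates $(J,\dot\gamma)_g$ with $(J',\dot\gamma)_g$ before landing on the correct conclusion, and the explicit spanning argument at $s=0$ is redundant once you invoke the dimension count — but these do not affect correctness.
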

\begin{proof}
Let us begin by showing
that $( J'(0), \dot \gamma(0))_g = 0$ for $J \in \mathcal J$.
Consider the curve $r \mapsto (0, z + r e_j)$ in coordinates (\ref{coords_tz}), where $z \in Z$ is fixed and $e_j$ is the $n$-dimensional vector with $1$ in the $j$th position, all other entries zero, and denote by $\nabla_{z^j}$ the covariant derivative along this curve. Using the symmetry property $\nabla_s \p_{z^j} \gamma = \nabla_{z^j} \p_s \gamma$, we see that
\be{Jj}
\big(M_j(0), M_j'(0)\big)= ((0,e_j),(0 , \nabla_{z^j} \theta )), \quad \big(J_k(0), J_k'(0)\big)= \big((0,0),(0,\partial_{a^k} \theta)\big),
\ee
where $\theta$ is the map defined by (\ref{def_theta}).
Hence 
$( M'_j(0), \dot \gamma(0))_g
= (\nabla_{z^j} \theta, \theta)_h = \p_{z^j} (\theta, \theta)_h / 2= 0$.
Similarly also $(J'_k(0), \dot \gamma(0))_g = 0$.
Finally, as $\dot \gamma' = 0$ and $ \Gamma' = \dot \gamma$, we have shown that $( J'(0), \dot \gamma(0))_g = 0$ for $J \in \mathcal J$.

Recall that $\p_s ( J'(s), \dot \gamma(s))_g = 0$ for any Jacobi field $J$ along $\gamma$.
Therefore $(J', \dot \gamma)_g = 0$
identically on $[0,\ell]$ for $J \in \mathcal J$.
In particular,
    \begin{equation}\label{Js_incl}
\mathbb J(s) := \{(J(s), J'(s))|\; J \in \mathcal J\}
\subset \{(V,W) \in (T_{\gamma(s)} M)^2 |\; W \in \dot\gamma(s)^\perp\}.
    \end{equation}

The vectors $(J(0), J'(0))$, $J \in \mathcal J$, are linearly independent, as can be seen from (\ref{Jj}) and from 
\[
(\dot\gamma(0), \gamma''(0)) 
= ((1,\theta), 0), \quad
(\Gamma(0), \Gamma'(0)) 
= (0, (1,\theta)).
\]
As Jacobi fields satisfy a linear second order differential equation, it follows that the dimension of $\mathcal J$ is $2n+1$ and that the same is true for $\mathbb J(s)$, $s \in [0,\ell]$.
The claim follows from (\ref{Js_incl}) since both the spaces there have the same dimension.
\end{proof}

For fixed $s_1, s_2 \in [0,\ell]$,  consider the spaces 
    \begin{equation}\label{def_J0}
\mathcal J_{s_1} = \{J \in \mathcal J|\; J(s_1) = 0\},
\quad 
\mathcal J_{s_1,s_2}' = \{J'|\; J\in\mathcal{J}_{s_1} \cap \mathcal{J}_{s_2}\},
    \end{equation}
and set for every $s \in[0,\ell]$ 
    \begin{equation}\label{def_J0s}
\mathcal{J}_{s_1}(s) =  \{  J(s)|\;J\in\mathcal{J}_{s_1}  \}, \quad 
\mathcal{J}_{s_1, s_2}'(s) =  \{ J'(s)|\; J' \in \mathcal J_{s_1,s_2}'\}.
    \end{equation}
Lemma \ref{lem_Jacobi} implies that 
$\mathcal{J}'_{s_1,s_2}(s)\subset \dot\gamma(s)^\perp$.
The same is true for $\mathcal{J}_{s_1}(s)$ since
$$
\p_s (J, \dot \gamma)_g = (J', \dot \gamma)_g = 0, \quad J \in \mathcal J,
$$
and $(J(s_1), \dot \gamma(s_1))_g = 0$ for $J \in \mathcal{J}_{s_1}$.
To summarize, for every $s \in [0,\ell]$,
\be{J0}
\mathcal{J}_{s_1}(s)\cup \mathcal{J}'_{s_1,s_2}(s)\subset \dot\gamma(s)^\perp,
\ee
and in particular, both  spaces consist of spacelike or lightlike vectors. 
 Furthermore, 
 $\dot \gamma(s)\in \mathcal{J}_{s_1}(s)$ if and only $s\not=s_1$, because $(s-s_1)\dot\gamma(s)\in \mathcal J_{s_1}$. 
 On the other hand, 
 $\dot \gamma(s) \in \mathcal{J}_{s_1,s_2}'(s)$ if and only $s=s_1=s_2$.

We will need below the following simple lemma. 

\begin{lemma}\label{lem_perp_lightlike}
If two lightlike vectors $v, w \in T_x M$ satisfy $(v,w)_g = 0$ then they are parallel. 
\end{lemma}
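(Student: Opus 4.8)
The plan is to work in an adapted basis at the point $x$ and use the signature $(-,+,\dots,+)$ directly. Since $v$ is lightlike, it is in particular nonzero; choose a pseudo-orthonormal basis $e_0, e_1, \dots, e_n$ of $T_x M$ with $(e_0,e_0)_g = -1$, $(e_i,e_i)_g = 1$ for $i \ge 1$, so that $v = v^0 e_0 + v'$ with $v' \in \spa\{e_1,\dots,e_n\}$ and $g(v,v) = -(v^0)^2 + |v'|^2 = 0$, hence $|v'| = |v^0| \ne 0$. After rescaling we may assume $v^0 = 1$, so $v = e_0 + \hat v$ with $\hat v$ a unit vector in the Euclidean $\spa\{e_1,\dots,e_n\}$.

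Writing $w = w^0 e_0 + w'$ similarly, the lightlike condition on $w$ gives $(w^0)^2 = |w'|^2$, and the orthogonality condition $(v,w)_g = 0$ reads $-w^0 + \hat v \cdot w' = 0$, i.e.\ $\hat v \cdot w' = w^0$. Then by Cauchy--Schwarz in the Euclidean inner product on $\spa\{e_1,\dots,e_n\}$,
\[
|w^0| = |\hat v \cdot w'| \le |\hat v|\,|w'| = |w'| = |w^0|,
\]
so equality holds in Cauchy--Schwarz, which forces $w'$ to be parallel to $\hat v$, say $w' = c\,\hat v$. Plugging back in, $w^0 = \hat v \cdot w' = c$, so $w = c(e_0 + \hat v) = c\,v$, i.e.\ $w$ and $v$ are parallel. (If $w = 0$ the conclusion is trivial, so one may assume $w \ne 0$, whence $c \ne 0$.)

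There is no real obstacle here; this is an elementary linear-algebra fact about the null cone of a Lorentzian form, and the only mild point is that ``parallel'' should be read as allowing the scalar factor to be zero, i.e.\ the statement is really that $v,w$ are linearly dependent. One could alternatively phrase the argument coordinate-free by noting that the restriction of $g$ to the orthogonal complement $v^\perp$ is positive semidefinite with one-dimensional kernel spanned by $v$ (a standard consequence of $v$ being lightlike and $g$ having Lorentzian signature), and any lightlike $w \in v^\perp$ must lie in that kernel since the induced form on $v^\perp / \spa\{v\}$ is positive definite and kills $w$; but the basis computation above is the quickest self-contained route.
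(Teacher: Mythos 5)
Your proof is correct and is essentially the paper's argument: both normalize the metric to Minkowski form at the point $x$, write the lightlike vectors in the form $(\pm 1,\theta)$ with $\theta$ a Euclidean unit vector, and deduce parallelism from the equality case of Cauchy--Schwarz. You spell out the rescaling and the sign of $w^0$ a bit more explicitly, but there is no substantive difference.
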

\begin{proof}
 We can choose local coordinates so that $g$ coincides with the Minkowski metric at $x$. Then $v$ and $w$ are parallel with vectors of the form $(1,\theta)$ and $(1,\omega)$ with $\theta$ and $\omega$ unit vectors.
Now $(v,w)_g = 0$ implies that $\omega \cdot \theta = 1$, and thus $\omega$ and $\theta$ must be parallel.
\end{proof}

We will need the following property: for any Jacobi fields $I, J$ along a geodesic $\gamma$, the Wronskian 
\be{Wronskian}
(I,J')_g - ( I',J)_g\quad  \text{is  constant along  $\gamma$}, 
\ee
see e.g. 
\cite[p.~274]{ONeill-book}. 

\begin{lemma}\label{lemma_J} 
Let $(z,a) \in \mathcal M$ and write $\gamma = \gamma_{z,a}$. 
Then for every $s_1,s_2\in[0,\ell]$, we have 
\begin{itemize}
\item[(i)] $\mathcal{J}_{s_1}(s_2)$ and $\mathcal{J}_{s_1,s_2}'(s_2)$ are mutually orthogonal with respect to $g$,
\item[(ii)] $\mathcal{J}_{s_1}(s_2)\cap \mathcal{J}_{s_1,s_2}'(s_2)=\{0\}$,
\item[(iii)] $\mathcal{J}_{s_1}(s_2)+ \mathcal{J}_{s_1,s_2}'(s_2) = \dot \gamma(s_2)^\perp$.
\end{itemize}
\end{lemma}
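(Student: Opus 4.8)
The plan is to prove the three claims of Lemma~\ref{lemma_J} simultaneously, exploiting the Wronskian invariant \r{Wronskian} together with the dimension count already established in Lemma~\ref{lem_Jacobi}. First I would fix $s_1,s_2$ and observe the basic mechanism: for $I\in\mathcal J_{s_1}$ and $J\in\mathcal J_{s_1}\cap\mathcal J_{s_2}$, the Wronskian $(I,J')_g-(I',J)_g$ is constant along $\gamma$; evaluating it at $s=s_1$ we get $(I(s_1),J'(s_1))_g-(I'(s_1),J(s_1))_g = 0$ since $I(s_1)=0$ and $J(s_1)=0$. Evaluating the same constant at $s=s_2$ and using $J(s_2)=0$ yields $(I(s_2),J'(s_2))_g = 0$. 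Since $I(s_2)$ ranges over $\mathcal{J}_{s_1}(s_2)$ and $J'(s_2)$ ranges over $\mathcal{J}'_{s_1,s_2}(s_2)$, this gives orthogonality (i).

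For (ii), suppose $v\in\mathcal{J}_{s_1}(s_2)\cap\mathcal{J}'_{s_1,s_2}(s_2)$. By \r{J0} the vector $v$ lies in $\dot\gamma(s_2)^\perp$, hence is spacelike or lightlike; by (i) it is orthogonal to itself, so $(v,v)_g=0$, forcing $v$ to be lightlike or zero. If $v$ were lightlike, write $v=I(s_2)=J'(s_2)$ with $I\in\mathcal J_{s_1}$, $J\in\mathcal{J}_{s_1}\cap\mathcal{J}_{s_2}$; I would then need to derive a contradiction — here I expect to invoke Lemma~\ref{lem_perp_lightlike}: since $v\perp\dot\gamma(s_2)$ and both are lightlike they are parallel, so $v=c\dot\gamma(s_2)$. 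But $\dot\gamma(s_2)\in\mathcal{J}_{s_1}(s_2)$ precisely when $s_2\neq s_1$, and $\dot\gamma(s_2)\in\mathcal{J}'_{s_1,s_2}(s_2)$ precisely when $s_1=s_2$ (both noted in the text), so these cannot hold simultaneously unless $c=0$; thus $v=0$, proving (ii).

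For (iii), I would run a dimension count. Set $W=\dot\gamma(s_2)^\perp$, which is $n$-dimensional. By (i) and (ii), $\mathcal{J}_{s_1}(s_2)$ and $\mathcal{J}'_{s_1,s_2}(s_2)$ are complementary inside $W$ once we know $\dim\mathcal{J}_{s_1}(s_2)+\dim\mathcal{J}'_{s_1,s_2}(s_2)=n$; then (ii) upgrades the sum to a direct sum filling $W$. To get the dimension identity, consider the evaluation map $\mathcal{J}\ni J\mapsto (J(s_1),J'(s_1))$; by Lemma~\ref{lem_Jacobi} its image is $\{(V,W)\in(T_{\gamma(s_1)}M)^2\,|\,W\in\dot\gamma(s_1)^\perp\}$ and it is injective (a Jacobi field is determined by its Cauchy data), so $\mathcal J_{s_1}=\{J\in\mathcal J\,|\,J(s_1)=0\}$ has dimension $n$ (the fiber over $V=0$), and the map $J\mapsto J(s_2)$ on $\mathcal J_{s_1}$ has kernel $\mathcal J_{s_1}\cap\mathcal J_{s_2}$, whence $\dim\mathcal{J}_{s_1}(s_2)=n-\dim(\mathcal J_{s_1}\cap\mathcal J_{s_2})$. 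Meanwhile $J\mapsto J'(s_2)$ restricted to $\mathcal J_{s_1}\cap\mathcal J_{s_2}$ has trivial kernel (a Jacobi field vanishing at $s_2$ with vanishing derivative there is zero), so $\dim\mathcal{J}'_{s_1,s_2}(s_2)=\dim(\mathcal J_{s_1}\cap\mathcal J_{s_2})$; adding gives $n$, as needed.

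The main obstacle I anticipate is the lightlike case in part (ii): ruling out that a nonzero lightlike vector lies in the intersection. The argument above via Lemma~\ref{lem_perp_lightlike} and the two "iff" dichotomies for when $\dot\gamma(s)$ belongs to each space should close it, but one must be careful about the boundary case $s_1=s_2$, where $\mathcal{J}'_{s_1,s_2}(s_2)$ can contain $\dot\gamma(s_2)$; in that situation $\mathcal{J}_{s_1}(s_2)=\mathcal{J}_{s_2}(s_2)=\{J(s_2)\,|\,J(s_2)=0\}=\{0\}$, so the intersection is trivially $\{0\}$ and (iii) reads $\{0\}+\dot\gamma(s_2)^\perp=\dot\gamma(s_2)^\perp$, consistent with the dimension count since then $\dim(\mathcal J_{s_1}\cap\mathcal J_{s_2})=n$ and $\dim\mathcal{J}'_{s_1,s_1}(s_1)=n$. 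I would make sure the dimension bookkeeping is stated uniformly in $s_1,s_2$ so that the degenerate and nondegenerate cases are handled by the same formulas.
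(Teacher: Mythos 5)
Your parts (i) and (ii) track the paper's argument closely: the Wronskian identity \r{Wronskian} evaluated at $s_1$ and $s_2$ gives orthogonality, and a self-orthogonal vector in $\dot\gamma(s_2)^\perp$ must (by Lemma~\ref{lem_perp_lightlike}) be a multiple of $\dot\gamma(s_2)$, which the dichotomy for when $\dot\gamma(s_2)$ lies in $\mathcal{J}_{s_1}(s_2)$ versus $\mathcal{J}'_{s_1,s_2}(s_2)$ then rules out. Your part (iii), however, is a genuinely different and cleaner route. The paper sets $W=\mathcal{J}_{s_1}(s_2)+\mathcal{J}'_{s_1,s_2}(s_2)$, shows $W^\perp\subset\R\dot\gamma(s_2)$ by a second Wronskian argument involving an auxiliary Jacobi field $K(s)=I(s)+\lambda\frac{s-s_2}{s_2-s_1}\dot\gamma(s)$, and concludes via the Lorentzian double-perp identity $(W^\perp)^\perp=W$ from O'Neill. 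You instead run a rank--nullity count: Lemma~\ref{lem_Jacobi} pins $\dim\mathcal J_{s_1}=n$ (the fiber of the isomorphism $J\mapsto(J(s_1),J'(s_1))$ over $V=0$); the evaluation $J\mapsto J(s_2)$ then gives $\dim\mathcal{J}_{s_1}(s_2)=n-\dim(\mathcal J_{s_1}\cap\mathcal J_{s_2})$; and $J\mapsto J'(s_2)$ restricted to $\mathcal J_{s_1}\cap\mathcal J_{s_2}$ is injective, giving $\dim\mathcal{J}'_{s_1,s_2}(s_2)=\dim(\mathcal J_{s_1}\cap\mathcal J_{s_2})$. Together with (ii) the sum is direct of total dimension $n$, which is $\dim\dot\gamma(s_2)^\perp$, so the inclusion from (\ref{J0}) is an equality. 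Your version avoids both the auxiliary field $K$ and the appeal to $(W^\perp)^\perp=W$, and it handles the boundary case $s_1=s_2$ uniformly rather than disposing of it separately as the paper does; the paper's version, conversely, is more geometric in flavor and gives an explicit description of $W^\perp$. Both are correct.
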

\begin{proof}
Note first that if $s_2=s_1$, then $\mathcal{J}_{s_1}(s_2)= \{0\}$ and $\mathcal{J}_{s_1,s_2}'(s_2) = \dot\gamma(s_0)^\perp$ by Lemma~\ref{lem_Jacobi}. Therefore the lemma holds in this case, and we can assume $s_2\not=s_1$ in what follows. 

For $w\in T_{\gamma(s_2)}M$ with $w\in \mathcal{J}_{s_1,s_2}'(s_2)$, let $I\in \mathcal{J}_{s_1}$ be the Jacobi field with Cauchy data $(0,w)$ at $s=s_2$. (If $I\ne0$, then $\gamma(s_1)$ and $\gamma(s_2)$ are conjugate along $\gamma$.) 
By \r{Wronskian},  for every $J\in\mathcal{J}_{s_1}$, we get $(w, J(s_2))_g=( I'(s_1),J(s_1))_g - ( I(s_1),J'(s_1))_g=0$, therefore, $w$ is orthogonal to $\mathcal{J}_{s_1}(s_2)$. This proves (i). 

To prove (ii), assume that $w\in\mathcal{J}_{s_1}(s_2)\cap \mathcal{J}_{s_1,s_2}'(s_2)$. Then $w$ is orthogonal to itself by (i), therefore it is lightlike. By \r{J0} it is also perpendicular to the lightlike vector $\dot\gamma(s_2)$, and Lemma \ref{lem_perp_lightlike} implies that $w$ must be parallel to $\dot \gamma(s_2)$. That is, $w=\lambda\dot\gamma(s_2)$ with some $\lambda \in \R$. Since $w\in \mathcal{J}_{s_1}'(s_2)$, then there is $J\in\mathcal{J}_{s_1}$ with Cauchy data $(0,\lambda\dot\gamma(s_2))$ at $s=s_2$; but then $J(s)=\lambda(s-s_2)\dot\gamma(s)$. Now $J\in \mathcal{J}_{s_1}$ and $s_1 \ne s_2$ imply $\lambda=0$, hence $J=0$ and also $w = 0$.

Consider now (iii). We write $W = \mathcal{J}_{s_1}(s_2)+ \mathcal{J}_{s_1,s_2}'(s_2)$. As $W \subset \dot \gamma(s_2)^\perp$
by (\ref{J0}), it remains to show the opposite inclusion. We will establish this by showing that $W^\perp$ is contained in $\R \dot \gamma(s_2) := \{\lambda \dot \gamma(s_2)|\; \lambda \in \R\}$. Then 
$\dot \gamma(s_2)^\perp \subset (W^\perp)^\perp$
and (iii) follows from $(W^\perp)^\perp = W$,
see e.g. \cite[Lemma 22, p. 49]{ONeill-book} for the latter fact.

Let  $w\in W^\perp$ and let $I$ be the Jacobi field with Cauchy data $(0,w)$ at $s=s_2$.
As $w$ is in particular orthogonal to $\mathcal{J}_{s_1}(s_2)$, by using \r{Wronskian} we get for every $J\in \mathcal{J}_{s_1}$, 
$$
(I(s_1),J'(s_1))_g = (I(s_2),J'(s_2))_g - (I'(s_2),J(s_2))_g =0. 
$$
Recall that by Lemma~\ref{lem_Jacobi}, 
$\{J'(s_1)|\; J\in\mathcal{J}_{s_1} \} = \mathcal{J}'_{s_1,s_1}(s_1)= \dot\gamma(s_0)^\perp$.
Therefore $I(s_1)$ is in $(\dot\gamma(s_1)^\perp)^\perp = \R \dot\gamma(s_1)$ and we write $I(s_1) = \lambda \dot\gamma(s_1)$.
Then for the Jacobi field 
\[
K(s)=I(s) + \lambda \frac{s-s_2}{s_2-s_1}\dot\gamma(s)
\]
it holds that $K(s_1)=0$ and $K(s_2)=0$. Writing
$u = K'(s_2)$ and $\mu = \lambda(s-s_0)^{-1}$, we have $u \in \mathcal{J}'_{s_1,s_2}(s_2)$ and
$u = w + \mu \dot \gamma(s)$.

Let us now use the fact that $w$ is orthogonal to the whole $W$.
It follows from (\ref{J0}) that $\R \dot \gamma(s_2) \subset W^\perp$ and therefore also $u = w + \mu \dot \gamma(s) \in W^\perp$. But $u \in \mathcal{J}'_{s_1,s_2}(s_2) \subset W$, and $u$ must be lightlike. 
Lemma \ref{lem_Jacobi} implies that $(u, \dot \gamma(s))_g = 0$ and then $u \in \R \dot \gamma(s)$ by Lemma \ref{lem_perp_lightlike}.
Hence also $w \in \R \dot \gamma(s)$.
\end{proof}

We will denote  by $\zeta_* \in T_z Z$ the image of $\zeta \in T_z^* Z$, with $z \in Z$, under the canonical isomorphism induced by $h$, i.e., $\zeta_*^j = h^{jk} \zeta_k$. Analogously for $\xi \in T_x^* M$, with $x \in M$, we denote by $\xi_* \in T_x M$  the vector defined by $\xi_*^j = g^{jk} \xi_k$.

Recall that in the Minkowski case the lightlike covectors on the canonical relation are characterized by (\ref{minkowski_lightlike}), or equivalently by $\xi \parallel \theta$. These two characterizations have the following analogues in the present context.

\begin{lemma}\label{lem_lightlike}
Let $(z,a,s,\xi) \in C_0$. Then  the following three conditions are equivalent:
\begin{itemize}
\item[(i)] $\xi$ is lightlike,
\item[(ii)] $\xi_*$ is parallel to $\dot \gamma_{z,a}(s)$,
\item[(iii)] $\zeta_*$ is parallel to $\theta(z,a)$
and $\alpha = 0$
where $\zeta$ and $\alpha$ are given by (\ref{Pi_M1}).
\end{itemize}
\end{lemma}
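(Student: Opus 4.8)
The plan is to prove the chain of equivalences (i) $\Leftrightarrow$ (ii) $\Leftrightarrow$ (iii) by exploiting the defining relations of $C_0$, namely $\langle \xi, \dot\gamma_{z,a}(s)\rangle = 0$, together with Lemma~\ref{lem_perp_lightlike} and the description of the Jacobi fields $M_j$, $J_k$ in Lemma~\ref{lem_Jacobi}. Throughout write $\gamma = \gamma_{z,a}$ and $v = \dot\gamma(s)$, which is a lightlike vector by construction. The key observation for the first equivalence is that $\xi$ lightlike means $\xi_*$ is lightlike, and the constraint $\langle \xi, v\rangle = (\xi_*, v)_g = 0$ says $\xi_*$ is orthogonal to the lightlike vector $v$; then Lemma~\ref{lem_perp_lightlike} gives $\xi_* \parallel v$ at once. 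Conversely, if $\xi_* \parallel v$ then $\xi_*$ is lightlike because $v$ is, so $\xi$ is lightlike. This settles (i) $\Leftrightarrow$ (ii).

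For (ii) $\Leftrightarrow$ (iii) I would translate the parallelism $\xi_* = \lambda v$ into the dual coordinates via the formulas (\ref{Pi_M1}): $\zeta_j = \langle \xi, M_j(s)\rangle$ and $\alpha_k = \langle \xi, J_k(s)\rangle$. The plan is to evaluate these at $s = 0$, where (\ref{Jj}) tells us $M_j(0) = (0, e_j)$ and $J_k(0) = (0,0)$; but more usefully, I should propagate to general $s$ using the fact that $\langle \xi, J(s)\rangle$ for a Jacobi field $J$ is controlled by the Wronskian-type identity, together with $\dot\gamma(0) = (1,\theta(z,a))$. Concretely: if $\xi_* = \lambda \dot\gamma(s)$, then for any Jacobi field $J$ along $\gamma$ we have $\langle \xi, J(s)\rangle = \lambda (\dot\gamma(s), J(s))_g$. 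Since $(\dot\gamma, J)_g$ has derivative $(\dot\gamma, J')_g$ which is constant, and one computes $(\dot\gamma(0), M_j'(0))_g = 0$, $(\dot\gamma(0), J_k'(0))_g = 0$ from (\ref{Jj}) (these are exactly the vanishing computations in the proof of Lemma~\ref{lem_Jacobi}), we get that $(\dot\gamma, M_j)_g$ and $(\dot\gamma, J_k)_g$ are constant in $s$, hence equal to their values at $s = 0$. At $s=0$: $(\dot\gamma(0), M_j(0))_g = ((1,\theta),(0,e_j))_g = (\theta, e_j)_h = \theta_j$ (the $j$-th component of $\theta$ with respect to $h$), giving $\zeta_j = \lambda \theta_j$, i.e.\ $\zeta = \lambda h$-dual of $\theta$, equivalently $\zeta_* = \lambda \theta$; and $(\dot\gamma(0), J_k(0))_g = ((1,\theta),(0,0))_g = 0$, giving $\alpha_k = 0$. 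This shows (ii) $\Rightarrow$ (iii).

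For the converse (iii) $\Rightarrow$ (ii), suppose $\zeta_* = \mu \theta$ and $\alpha = 0$. Consider $w := \xi_* - \mu \dot\gamma(s) \in T_{\gamma(s)}M$; I want to show $w = 0$. The plan is to test $w$ against the full span $\mathcal J$ from Lemma~\ref{lem_Jacobi}. First, $\langle \xi, \dot\gamma(s)\rangle = 0$ and $(\dot\gamma(s), \dot\gamma(s))_g = 0$ give $(w, \dot\gamma(s))_g = 0$. Next, for each $M_j$ and $J_k$, using that $\langle \xi, M_j(s)\rangle = \zeta_j$ and $(\mu\dot\gamma(s), M_j(s))_g = \mu(\dot\gamma(0),M_j(0))_g = \mu\theta_j$ from the constancy argument above, together with the hypothesis $\zeta_j = \mu\theta_j$, we get $(w, M_j(s))_g = 0$; similarly $(w, J_k(s))_g = \alpha_k - 0 = 0$. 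Finally $(w, \Gamma(s))_g = s(w,\dot\gamma(s))_g = 0$. By Lemma~\ref{lem_Jacobi} the vectors $\{J(s) : J \in \mathcal J\}$ span all of $T_{\gamma(s)}M$, so $w$ is orthogonal to everything, hence $w = 0$, i.e.\ $\xi_* = \mu\dot\gamma(s)$, which is (ii). The main obstacle is bookkeeping the metric conventions — keeping straight the $h$-dual on $Z$ versus the $g$-dual on $M$, and verifying that the normalization $(\theta_0,\theta_0)_h = 1$ together with $g = -dt^2 + g'$ makes $(1,\theta)$ genuinely lightlike so that $\dot\gamma(0) = (1,\theta)$ — but these are routine once the constancy of $(\dot\gamma, M_j)_g$ and $(\dot\gamma, J_k)_g$ is in hand, which is the one genuinely substantive step and is already contained in the proof of Lemma~\ref{lem_Jacobi}.
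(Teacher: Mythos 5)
Your proposal is correct and follows essentially the same route as the paper: (i)\,$\Leftrightarrow$\,(ii) via Lemma~\ref{lem_perp_lightlike}, then (ii)\,$\Rightarrow$\,(iii) by observing that $(\dot\gamma, M_j)_g$ and $(\dot\gamma, J_k)_g$ are constant in $s$ (a consequence of the proof of Lemma~\ref{lem_Jacobi}) and evaluating at $s=0$ using (\ref{Jj}), and finally (iii)\,$\Rightarrow$\,(ii) by pairing $\xi_* - \mu\dot\gamma(s)$ against the spanning set $\{J(s)\,|\,J\in\mathcal J\} = T_{\gamma(s)}M$ and invoking non-degeneracy of $g$. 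The only cosmetic difference is that you phrase the last step in terms of $w = \xi_* - \mu\dot\gamma(s)$ while the paper directly compares the two pairings, which is the same computation.
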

\begin{proof}
We will suppress $(z,a)$ in the notation below. 
Let us suppose first that $\xi_*$ is lightlike and show that $\xi_*$ is parallel to $\dot \gamma(s)$. As $(\xi_*, \dot \gamma(s))_g = \pair{\xi,\dot \gamma(s)} = 0$, Lemma \ref{lem_perp_lightlike} implies that $\xi_*$ is parallel to $\dot \gamma(s)$. 

Let us now suppose that $\xi_* = \lambda \dot \gamma(s)$ for some $\lambda \in \R$, and show that $\zeta_* = \lambda \theta$ and $\alpha = 0$.
Lemma \ref{lem_Jacobi} implies 
$
\p_s (M_j(s), \dot \gamma(s))_g = (M_j'(s), \dot \gamma(s))_g = 0.
$
Hence using also (\ref{Jj})
    \begin{equation*}
\zeta_j = \lambda (\dot \gamma(s), M_j(s))_g
= \lambda (\dot \gamma(0), M_j(0))_g
= \lambda ((1,\theta),(0,e_j))_g = \lambda \theta^{k} h_{kj}. 
    \end{equation*}
This establishes $\zeta_* = \lambda \theta$.
Analogously, 
$
\alpha_k = \lambda (\dot \gamma(0), J_k(0))_g
= 0
$ since $J_k(0) = 0$.

Let us now suppose that $\zeta_* = \lambda \theta$ and $\alpha = 0$ and show that $\xi_* = \lambda \dot \gamma(s)$.
The equations in the previous step imply that 
$$
(\xi_*, M_j(s))_g = \zeta_j = \lambda (\dot \gamma(s), M_j(s))_g, \quad
(\xi_*, J_k(s))_g = \alpha_k = 0 = \lambda (\dot \gamma(0), J_k(0))_g.
$$ 
Moreover, $(\xi_*, \dot \gamma(s))_g = 0 = \lambda(\dot \gamma(s), \dot \gamma(s))_g$. By Lemma \ref{lem_Jacobi}, $\{J(s)|\; J \in \mathcal J\} = T_{\gamma(s)} M$, and hence $\xi_* = \lambda \dot \gamma(s)$. This again implies that $\xi_*$ is lightlike.
\end{proof}

\subsection{The projection $\pi_\mathcal{M}$} 
We analyze $\pi_\mathcal{M}$ next.  We have  
\be{pi_calMa}
\pi_\mathcal{M}  \big((z,\a,\zeta,\alpha) , (x, \xi )     \big)  =    (z,\a,\zeta,\alpha) .
\ee
Since $C$ is parameterized by $(z,\a,s,\xi)\in C_0$, we view $\pi_\mathcal{M}$ as a function of those parameters.

As before, this projection is a map from the $3n$ dimensional $C$ to the $4n-2$ dimensional $T^*\mathcal{M}$. 
To see whether $\pi_\mathcal{M}$ is injective, let the right-hand side of \r{pi_calMa} be given. This means in particular that the geodesic $\gamma_{z,\a}$ is fixed. We want to find out whether the defining equations of $C$, that is, 
    \begin{equation}\label{eq_inj}
\zeta_j=\langle\xi, M_j(s;z,a)\rangle,\quad\alpha_k= \langle\xi, J_k(s;z,a)\rangle, \quad \langle \xi, \dot\gamma_{z,\a}(s)\rangle =0,
    \end{equation}
have more than one solution for $s$ and $\xi$.  

\begin{lemma}\label{lem_form}
Let $(z,a) \in \mathcal M$, $s_1 \ne s_2$, $J' \in \mathcal J'_{s_1,s_2}$, and let $\lambda \in \R$. Then 
$$
\xi_*^j = J'(s_j) + \lambda \dot \gamma(s_j), \quad j=1,2,
$$ 
satisfy
    \begin{equation}\label{ZA_non_inj}
\langle\xi^1, M_j(s_1;z,a)\rangle
=\langle\xi^2, M_j(s_2;z,a)\rangle
\quad 
\langle\xi^1, J_k(s_1;z,a)\rangle
= \langle\xi^2, J_k(s_2;z,a)\rangle,
    \end{equation}
and $\langle \xi^j, \dot\gamma_{z,\a}(s_j)\rangle =0$, $j=1,2$.
\end{lemma}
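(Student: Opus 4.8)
The plan is to verify the three families of identities directly from the definition of the covectors $\xi^1,\xi^2$, using the Wronskian relation \r{Wronskian} together with the properties of the space $\mathcal J$ established in Section~\ref{sec_VG}. Recall that $\xi^i$ is, by construction, the covector at $\gamma(s_i)$ which is $g$-dual to the vector $J'(s_i)+\lambda\dot\gamma(s_i)\in T_{\gamma(s_i)}M$, so every pairing below may be rewritten as a $g$-inner product of vectors along $\gamma$. Fix $J\in\mathcal J_{s_1}\cap\mathcal J_{s_2}$ with $\nabla_s J = J'$, so that $J(s_1)=J(s_2)=0$. The orthogonality relations $\pair{\xi^i,\dot\gamma_{z,a}(s_i)}=0$, $i=1,2$, are immediate: expanding, $\pair{\xi^i,\dot\gamma(s_i)}=(J'(s_i),\dot\gamma(s_i))_g+\lambda(\dot\gamma(s_i),\dot\gamma(s_i))_g$, where the first term vanishes because $J'\in\mathcal J'_{s_1,s_2}$ forces $J'(s)\in\dot\gamma(s)^\perp$ by \r{J0}, and the second vanishes because $\gamma$ is lightlike.

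For the identities in \r{ZA_non_inj} I would treat the $M_j$ and the $J_k$ pieces in exactly the same way. Writing
\[
\pair{\xi^i,M_j(s_i)}=(J'(s_i),M_j(s_i))_g+\lambda(\dot\gamma(s_i),M_j(s_i))_g,\qquad i=1,2,
\]
and similarly with $J_k$ in place of $M_j$, it suffices to show that each of the two summands on the right takes the same value at $i=1$ and $i=2$. For the first summand, apply the Wronskian identity \r{Wronskian} to the two Jacobi fields $J$ and $M_j$ (both lie in $\mathcal J$): the quantity $(J,M_j')_g-(J',M_j)_g$ is constant along $\gamma$, and since $J(s_1)=J(s_2)=0$, evaluating at $s_1$ and $s_2$ gives $(J'(s_1),M_j(s_1))_g=(J'(s_2),M_j(s_2))_g$; the same argument with $J_k$ gives $(J'(s_1),J_k(s_1))_g=(J'(s_2),J_k(s_2))_g$. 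For the second summand, I would note that $s\mapsto(\dot\gamma(s),M_j(s))_g$ and $s\mapsto(\dot\gamma(s),J_k(s))_g$ are constant in $s$: since $\dot\gamma'=0$, their derivatives equal $(\dot\gamma,M_j')_g$ and $(\dot\gamma,J_k')_g$, which vanish identically by the computation in the proof of Lemma~\ref{lem_Jacobi}, where it is shown that $(J',\dot\gamma)_g\equiv0$ along $\gamma$ for every $J\in\mathcal J$. Hence $\lambda(\dot\gamma(s_1),M_j(s_1))_g=\lambda(\dot\gamma(s_2),M_j(s_2))_g$, and likewise with $J_k$. Summing the two summands yields \r{ZA_non_inj}.

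There is no serious obstacle in this lemma; it is essentially a bookkeeping exercise with the Wronskian. The two points that need attention are: first, that $M_j$, $J_k$, $\dot\gamma$ and the chosen $J$ are all genuine Jacobi fields — which is precisely why they were collected into $\mathcal J$ rather than into the mere linear span of $\{M_j,J_k\}$ — so that \r{Wronskian} is applicable; and second, that the contribution of the $\lambda\dot\gamma$ term to each pairing must be checked to be independent of $s$, which is exactly where the consequences of Lemma~\ref{lem_Jacobi}, namely the constancy of $(\dot\gamma,J)_g$ for $J\in\mathcal J$, are used.
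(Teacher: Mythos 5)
Your proof is correct and follows essentially the same route as the paper's: you split the pairing into the $J'$-part and the $\lambda\dot\gamma$-part (the paper phrases this as linearity and treats the two choices $\xi_*^j = J'(s_j)$ and $\xi_*^j = \dot\gamma(s_j)$ separately), handle the former with the Wronskian \r{Wronskian} together with $J(s_1)=J(s_2)=0$, handle the latter via the constancy of $(\dot\gamma,M_j)_g$ and $(\dot\gamma,J_k)_g$ coming from $(J',\dot\gamma)_g\equiv 0$ in Lemma~\ref{lem_Jacobi}, and establish the orthogonality relations from \r{J0} and lightlikeness of $\gamma$. Nothing is missing.
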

\begin{proof}
The claimed equations are linear, so it is enough to verify that the choices $\xi_*^j = J'(s_j)$ and $\xi_*^j = \dot \gamma(s_j)$ satisfy them. We begin with the former choice. 
By (\ref{Wronskian}) it holds that 
    \begin{align*}
\pair{\xi^1, M_j(s_1)} 
&= 
(J'(s_1), M_j(s_1))_g - (J(s_1), M_j'(s_1))_g
\\&= 
(J'(s_2), M_j(s_2))_g - (J(s_2), M_j'(s_2))_g
= \pair{\xi^2, M_j(s_2)},
    \end{align*}
and analogously $\pair{\xi^1, J_k(s_1)} 
= \pair{\xi^2, J_k(s_2)}$. The last equation follows from (\ref{J0}).
Let us now consider the choice $\xi_*^j = \dot \gamma(s_j)$.
By Lemma \ref{lem_Jacobi} the scalar products
$(M_j', \dot \gamma)_g$ and $(J_k', \dot \gamma)_g$
and vanish identically. Thus $(M_j, \dot \gamma)_g$ is constant along $\gamma$, and the same holds for $(J_k, \dot \gamma)_g$. Therefore $\dot \gamma(s_1)$ and $\dot \gamma(s_2)$ solve (\ref{ZA_non_inj}).
The last equation holds since $\gamma$ is lightlike.
\end{proof}

\begin{lemma}\label{lem_conj}
Let $(z,a,\zeta,\alpha) \in T^* \mathcal M$ and let $(s_j,\xi^j) \in [0,\ell] \times T_{\gamma_{z,a}(s_j)}^* M$, 
$j=1,2$, solve (\ref{eq_inj}). Then the following hold:
\begin{itemize}
\item[(i)] Either both $\xi^1$ and $\xi^2$ are spacelike or they are both lightlike.
\item[(ii)] If $s_1 = s_2$ then $\xi^1 = \xi^2$.
\item[(iii)] If $s_1 \ne s_2$ then 
there are unique $J' \in \mathcal J'_{s_1,s_2}$ and $\lambda \in \R$ such that 
$$
\xi_*^j = J'(s_j) + \lambda \dot \gamma(s_j), \quad j=1,2.
$$
Moreover, $\xi^1$ and $\xi^2$ are spacelike if and only if $J' \ne 0$.
\end{itemize}
\end{lemma}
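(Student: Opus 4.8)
\textbf{Proof plan for Lemma~\ref{lem_conj}.} The plan is to reduce everything to the Jacobi-field description of $C_0$ developed above, and to exploit the splitting $\dot\gamma(s_2)^\perp = \mathcal J_{s_1}(s_2) \oplus \mathcal J'_{s_1,s_2}(s_2)$ from Lemma~\ref{lemma_J}. Throughout, write $\gamma = \gamma_{z,a}$, suppress $(z,a)$, and recall from \r{J0} that $\xi^j$ is automatically spacelike or lightlike because $\langle \xi^j, \dot\gamma(s_j)\rangle = 0$.

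First I would prove (ii). If $s_1 = s_2 =: s$, then $\xi^1$ and $\xi^2$ both satisfy the same linear system $\langle \xi, M_j(s)\rangle = \zeta_j$, $\langle \xi, J_k(s)\rangle = \alpha_k$, $\langle \xi, \dot\gamma(s)\rangle = 0$; since by Lemma~\ref{lem_Jacobi} the vectors $M_j(s), J_k(s), \dot\gamma(s)$ (indeed $\{J(s) : J \in \mathcal J\}$) span $T_{\gamma(s)}M$, a covector is determined by its pairings against them, so $\xi^1 = \xi^2$.

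Next I would handle the existence and uniqueness in (iii). Given the data $(\zeta,\alpha)$, observe that the pairings $\langle \xi^1, M_j(s_1)\rangle$, $\langle \xi^1, J_k(s_1)\rangle$ together with $\langle \xi^1, \dot\gamma(s_1)\rangle = 0$ determine $\xi^1_*$ as an element of $\dot\gamma(s_1)^\perp$ — uniqueness of $\xi^1$, and hence of $\xi^2$, is again Lemma~\ref{lem_Jacobi}. For existence of the claimed representation: consider the Jacobi field $I$ with Cauchy data $(0, \xi^1_*)$ at $s = s_1$ — this makes sense since $\xi^1_* \in \dot\gamma(s_1)^\perp$. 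Using the Wronskian relation \r{Wronskian} against each $J \in \mathcal J_{s_1}$, the fact that $\langle \xi^1, M_j(s_1)\rangle = \langle \xi^2, M_j(s_2)\rangle$ and similarly for $J_k$, and that $\{J'(s_1) : J \in \mathcal J_{s_1}\} = \dot\gamma(s_1)^\perp$, one shows $I(s_2)$ is orthogonal to all of $\dot\gamma(s_2)^\perp$, i.e. $I(s_2) = \mu \dot\gamma(s_2)$ for some $\mu$. Correcting by a multiple of $s \mapsto (s - s_1)\dot\gamma(s)$ produces a Jacobi field $J \in \mathcal J_{s_1} \cap \mathcal J_{s_2}$, i.e. $J' \in \mathcal J'_{s_1,s_2}$, with $J'(s_1) = \xi^1_* - \lambda \dot\gamma(s_1)$ for the appropriate $\lambda$; by Lemma~\ref{lem_form} (applied to this $J'$ and $\lambda$) the value at $s_2$ is then forced to be $\xi^2_* = J'(s_2) + \lambda\dot\gamma(s_2)$. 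Uniqueness of the pair $(J',\lambda)$: if $J'(s_j) + \lambda\dot\gamma(s_j) = \tilde J'(s_j) + \tilde\lambda\dot\gamma(s_j)$ for $j=1,2$, then $(J' - \tilde J')(s_1) = (\tilde\lambda - \lambda)\dot\gamma(s_1)$; but $J' - \tilde J' \in \mathcal J'_{s_1,s_2}(s_1) \subset \dot\gamma(s_1)^\perp$ by \r{J0}, while $\dot\gamma(s_1)$ is lightlike, so by Lemma~\ref{lem_perp_lightlike} either the difference is parallel to $\dot\gamma(s_1)$ or zero; tracking this through the Cauchy-data description (as in the proof of Lemma~\ref{lemma_J}(ii)) forces $J' = \tilde J'$ and $\lambda = \tilde\lambda$, using $s_1 \ne s_2$.

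Finally (i) and the spacelike/lightlike dichotomy in (iii): in the case $s_1 \ne s_2$, by Lemma~\ref{lemma_J}(i)--(ii) the decomposition $\xi^2_* = J'(s_2) + \lambda\dot\gamma(s_2)$ is, up to the ambiguity inside $\mathcal J_{s_1}(s_2)$, adapted to the $g$-orthogonal splitting $\mathcal J'_{s_1,s_2}(s_2) \oplus \mathcal J_{s_1}(s_2)$; since $\dot\gamma(s_2) \in \mathcal J_{s_1}(s_2)$ (as $s_2 \ne s_1$) and $J'(s_2) \in \mathcal J'_{s_1,s_2}(s_2)$, and since $\mathcal J_{s_1}(s_2) \perp \mathcal J'_{s_1,s_2}(s_2)$, one computes $g(\xi^2_*, \xi^2_*) = g(J'(s_2), J'(s_2))$. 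If $J' = 0$ then $\xi^2_*$ is a multiple of $\dot\gamma(s_2)$, hence lightlike, and the same for $\xi^1$; conversely if $\xi^2$ is lightlike then $g(J'(s_2), J'(s_2)) = 0$, and by \r{J0} $J'(s_2) \perp \dot\gamma(s_2)$ lightlike, so Lemma~\ref{lem_perp_lightlike} gives $J'(s_2) \parallel \dot\gamma(s_2)$; but $J'(s_2) \in \mathcal J'_{s_1,s_2}(s_2)$ and $\dot\gamma(s_2) \notin \mathcal J'_{s_1,s_2}(s_2)$ for $s_2 \ne s_1$ (remark after \r{J0}), forcing $J'(s_2) = 0$ and hence $J' = 0$. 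This proves the last assertion of (iii), and (i) follows: if $s_1 = s_2$ there is nothing to prove beyond \r{J0}, and if $s_1 \ne s_2$ the above shows $\xi^1, \xi^2$ are simultaneously lightlike (precisely when $J' = 0$) or simultaneously spacelike. The main obstacle is the bookkeeping in (iii): correctly identifying the Jacobi field realizing the representation and extracting uniqueness of $(J',\lambda)$ from the lightlike-orthogonality lemma without circularity; everything else is a direct application of Lemmas~\ref{lem_Jacobi}, \ref{lemma_J}, \ref{lem_perp_lightlike}, and \ref{lem_form}.
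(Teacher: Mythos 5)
Your overall strategy is sound, and parts (ii) and the spacelike/lightlike dichotomy in (iii) match the paper closely. The two genuine departures: you derive (i) as a corollary of (ii)–(iii) rather than invoking Lemma~\ref{lem_lightlike} directly (valid, and arguably more self-contained), and in (iii) you build the representation by starting from a Jacobi field $I$ with Cauchy data $(0,\xi^1_*)$ at $s_1$ and propagating to $s_2$, whereas the paper instead decomposes $\xi^2_*$ at $s_2$ using the direct sum $\dot\gamma(s_2)^\perp = \mathcal J_{s_1}(s_2) \oplus \mathcal J'_{s_1,s_2}(s_2)$ furnished by Lemma~\ref{lemma_J}. Your route essentially re-runs the mechanism of the proof of Lemma~\ref{lemma_J}(iii) inside Lemma~\ref{lem_conj}(iii); the paper's route saves that work by citing the decomposition and then showing directly that $\xi^2_* \perp \mathcal J_{s_1}(s_2)$.

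There is, however, one step that does not close as written. To conclude that $I(s_2)$ is orthogonal to all of $\dot\gamma(s_2)^\perp$, you invoke the Wronskian against $J \in \mathcal J_{s_1}$ and the fact that $\{J'(s_1): J\in\mathcal J_{s_1}\} = \dot\gamma(s_1)^\perp$. But for $J \in \mathcal J_{s_1}$ the Wronskian gives only $(I(s_2), J'(s_2))_g = (I'(s_2), J(s_2))_g$, which leaves the unknown $I'(s_2)$ in play and says nothing about the full set $\dot\gamma(s_2)^\perp$. The correct choice is $J \in \mathcal J_{s_2}$: then $J(s_2) = 0$, so the Wronskian gives $(I(s_2), J'(s_2))_g = -(\xi^1_*, J(s_1))_g$, and the equality of pairings in (\ref{eq_inj}) (which extends to all of $\mathcal J$, since it holds on a spanning set) yields $(\xi^1_*, J(s_1))_g = (\xi^2_*, J(s_2))_g = 0$. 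Combined with $\{J'(s_2): J\in\mathcal J_{s_2}\} = \dot\gamma(s_2)^\perp$ from Lemma~\ref{lem_Jacobi}, you get $I(s_2) \in (\dot\gamma(s_2)^\perp)^\perp = \R\,\dot\gamma(s_2)$ as desired. With this index correction the argument is complete; the remainder (the correction by a multiple of $(s-s_1)\dot\gamma(s)$, application of Lemma~\ref{lem_form}, uniqueness, and the $J'=0$ dichotomy) is correct.
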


Let us remark that the case $J'=0$ in (iii) is the analogue of the
fact that in the Minkowski case, equation (\ref{pi_calM_lightlike}) for lightlike $(\tau^j,\xi^j)$ is equivalent with $( \tau^1,\xi^1) = ( \tau^2,\xi^2)$ and both $(t_j, x_j)$, $j=1,2$, lying on the same line $\ell_{z,\theta}$.

\begin{proof}
We will again suppress $(z,a)$ in the notation below. 
We will begin by proving (i). Recall that $\langle \xi^j, \dot\gamma(s_j)\rangle =0$ implies that $\xi^j$ is lightlike or spacelike.  
It is enough to show that $\xi^1$ being lightlike implies that also $\xi^2$ is lightlike. So suppose that $\xi^1$ is lightlike. Then Lemma \ref{lem_lightlike} implies that $\zeta_*$ is parallel to $\theta$ and $\alpha = 0$. Therefore $\xi^2$ is lightlike by the same lemma.

Let us now show (ii). When $s_1 = s_2$, equation (\ref{eq_inj}) implies that 
$$
\pair{\xi^1,M_j(s_1)} = \pair{\xi^2,M_j(s_1)},
\quad \pair{\xi^1,J_k(s_1)} = \pair{\xi^2,J_k(s_1)},
\quad \pair{\xi^1,\dot \gamma(s_1)} = \pair{\xi^2,\dot \gamma(s_1)},
$$
and, as $\{J(s_1)|\; J \in \mathcal J\} = T_{\gamma(s_1)} M$ by Lemma \ref{lem_Jacobi}, it holds that $\xi^1 = \xi^2$.

We turn to (iii).
As $\xi^2_* \in \dot\gamma(s_2)^\perp$,
there are unique $u \in \mathcal{J}_{s_1}(s_2)$ and $w \in \mathcal{J}'_{s_1,s_2}(s_2)$
such that $\xi^2_* = u + w$ by Lemma \ref{lemma_J}.
As $(s_j,\xi^j)$ solve (\ref{eq_inj}), it holds that for all $J \in \mathcal J_{s_1}$ that
$$
0 = (\xi_*^1, J(s_1))_g = (\xi_*^2, J(s_2))_g.
$$
In other words, $\xi_*^2 \in \mathcal{J}_{s_1}(s_2)^\perp$.
By (i) of Lemma \ref{lemma_J}, also $w \in \mathcal{J}_{s_1}(s_2)^\perp$.
Therefore
$$
u = \xi_*^2 - w \in \mathcal{J}_{s_1}(s_2) \cap \mathcal{J}_{s_1}(s_2)^\perp
$$ 
and $u$ must be lightlike. 
As $u$ is orthogonal to $\dot \gamma(s_2)$ by (\ref{J0}), it follows from Lemma \ref{lem_perp_lightlike} that $u = \lambda \dot \gamma(s_2)$ for some $\lambda \in \R$.
Let $J$ be the Jacobi field with Cauchy data $(0,w)$ at $s = s_2$. Then $J(s_1) = 0$ since $w \in \mathcal{J}_{s_1,s_2}'(s_2)$.
Setting $\tilde \xi_*^{1} = J'(s_1) + \lambda \dot \gamma(s_1)$, the covectors $\tilde \xi^{1}$ and $\xi^2$ give a solution to (\ref{ZA_non_inj}) by Lemma \ref{lem_form}. 
It then follows from part (ii) that $\xi^1 = \tilde \xi^{1}$.

Clearly both $\xi^j$, $j=1,2$, are lightlike if $J'=0$.
On the other hand, if $\xi^j$, $j=1,2$ are lightlike, then $\xi^j_* \in \dot \gamma(s_j)^\perp$, applying Lemma~\ref{lem_perp_lightlike}, implies that $J'(s_j) = \mu \dot \gamma(s_j)$ for some $\mu \in \R$. 
Now $J(s_1) = 0$ and $J'(s_1) = \mu \dot \gamma(s_1)$ imply that $J(s)=\mu (s-s_1)\dot \gamma(s)$, and $J(s_2) = 0$ implies that $\mu = 0$.
\end{proof}

The above lemma says in particular that if there are two distinct solutions $(s_j,\xi^j)$, $j=1,2$, to (\ref{eq_inj}) and if $\xi^1$ is spacelike then $\gamma_{z,a}(s_1)$ and $\gamma_{z,a}(s_2)$
are conjugate along $\gamma_{z,a}$.
By Lemma~\ref{lem_form} the converse holds as well. Indeed, if 
$\gamma_{z,a}(s_1)$ and $\gamma_{z,a}(s_2)$
are conjugate along $\gamma_{z,a}$ then there is non-zero $J' \in \mathcal J'_{s_1,s_2}$ and for any $\lambda \in \R$ the vectors $\xi_*^j$ in Lemma~\ref{lem_form} are spacelike solutions to (\ref{ZA_non_inj}).
 
The characterization of the pairs $(\xi_1,\xi_2)$ is related to that in the Riemannian case, see  \cite[Theorem~4.2]{SU-caustics} where the conjugate points are assumed to be of fold type; see also \cite{Sean-X} for a more general case.

We will finish our study of $\pi_{\mathcal M}$ by showing that $\d \pi_{\mathcal M}$ is injective in the spacelike cone.

\begin{lemma}\label{lem_pimathcalM_inj}
Let $p_0 := ((z_0,a_0, \zeta^0,\alpha^0), (x_0,\xi^0))\in C$ and suppose that $\xi^0$ is spacelike. Then $\d\pi_\mathcal{M}$ is injective at $p_0$.
\end{lemma}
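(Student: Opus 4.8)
The plan is to show $\d\pi_{\mathcal M}$ is injective at $p_0$ by computing the kernel of $\d\pi_{\mathcal M}$ in terms of the parametrization of $C$ by $C_0 = \{(z,a,s,\xi)\}$ and showing it is trivial when $\xi^0$ is spacelike. Recall from (\ref{pi_calMa}) that in these coordinates $\pi_{\mathcal M}$ sends $(z,a,s,\xi)$ to $(z,a,\zeta,\alpha)$ where $\zeta_j = \langle \xi, M_j(s;z,a)\rangle$ and $\alpha_k = \langle \xi, J_k(s;z,a)\rangle$, subject to $\langle \xi, \dot\gamma_{z,a}(s)\rangle = 0$. A tangent vector in $\ker \d\pi_{\mathcal M}$ at $p_0$ must have vanishing $(\d z,\d a)$ components; so it is a curve of the form $(z_0, a_0, s(r), \xi(r))$ in $C_0$ with $s(0) = s_0$, $\xi(0) = \xi^0$, along which $z$ and $a$ are fixed and $\zeta, \alpha$ are constant to first order. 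Writing $\dot s = \d s/\d r|_{r=0}$ and $\dot\xi = \d\xi/\d r|_{r=0}$, the conditions are that for all $j,k$,
\[
\dot s\,\langle \xi^0, \nabla_s M_j(s_0)\rangle + \langle \dot\xi, M_j(s_0)\rangle = 0,
\quad
\dot s\,\langle \xi^0, \nabla_s J_k(s_0)\rangle + \langle \dot\xi, J_k(s_0)\rangle = 0,
\]
together with the linearization of $\langle \xi, \dot\gamma(s)\rangle = 0$, namely $\dot s\,\langle \xi^0, \nabla_s\dot\gamma(s_0)\rangle + \langle \dot\xi, \dot\gamma(s_0)\rangle = 0$, which since $\nabla_s \dot\gamma = 0$ reduces to $\langle \dot\xi, \dot\gamma(s_0)\rangle = 0$.

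Next I would reinterpret these equations intrinsically. Set $\eta := \dot\xi_* - \dot s\,\xi^0_{*\,\prime}$, where $\xi^0_{*\,\prime}$ denotes the covariant derivative along $\gamma$ of a field extending $\xi^0_*$; more invariantly, observe that $M_j, J_k, \dot\gamma, \Gamma$ span all of $T_{\gamma(s_0)}M$ by Lemma~\ref{lem_Jacobi}, so it suffices to test against this whole frame. The key point is that the Jacobi field $I$ along $\gamma$ with Cauchy data determined appropriately at $s_0$ will have the property that the left-hand sides above are Wronskians $\langle I, J'\rangle_g - \langle I', J\rangle_g$ evaluated along $\gamma$; using (\ref{Wronskian}) these Wronskians are constant in $s$. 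Concretely, consider the Jacobi field $I$ with $I(s_0) = \dot s\,\xi^0_*$ and $I'(s_0) = \dot\xi_*$ (taking covariant derivatives of the frame into account). Then $\langle I, \dot\gamma(s_0)\rangle = \dot s\langle\xi^0,\dot\gamma(s_0)\rangle = 0$ and, modulo the computation identifying the derivative-of-$\zeta$ terms with Wronskians, the kernel conditions say precisely that the Wronskian of $I$ against every element of $\mathcal J$ vanishes at $s_0$. Since this Wronskian is constant along $\gamma$ and $\mathcal J$ includes $\dot\gamma$ and $\Gamma$ (whose Cauchy data span the full range allowed by Lemma~\ref{lem_Jacobi}), vanishing of the Wronskian against all of $\mathcal J$ forces $(I(s_0), I'(s_0))$ to be orthogonal — in the Wronskian pairing — to the full space $\{(V,W) : W \in \dot\gamma(s_0)^\perp\}$. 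A dimension count then pins down $I$ to lie in the one-dimensional family $\lambda\,\Gamma + \mu\,\dot\gamma$ (the "trivial" Jacobi fields tangent to the geodesic spray), i.e. $I(s) = (\lambda s + \mu)\dot\gamma(s)$.

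Finally, I would extract a contradiction with spacelikeness. From $I(s_0) = \dot s\,\xi^0_* = (\lambda s_0 + \mu)\dot\gamma(s_0)$ we get that $\dot s\,\xi^0_*$ is parallel to the lightlike vector $\dot\gamma(s_0)$. If $\dot s \neq 0$ this means $\xi^0_*$ is lightlike (or zero), contradicting that $\xi^0$ is spacelike; hence $\dot s = 0$, so $\lambda s_0 + \mu = 0$, and then $I'(s_0) = \dot\xi_* = \lambda\,\dot\gamma(s_0) - \lambda s_0 \cdot 0 + \ldots$ — more precisely $I(s) = \lambda(s - s_0)\dot\gamma(s)$ gives $I'(s_0) = \lambda\dot\gamma(s_0)$, so $\dot\xi_*$ is also parallel to $\dot\gamma(s_0)$. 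But $\dot\xi$ satisfies $\langle\dot\xi,\dot\gamma(s_0)\rangle = 0$ together with $\langle\dot\xi, M_j(s_0)\rangle = 0$ and $\langle\dot\xi,J_k(s_0)\rangle = 0$ (the $\dot s = 0$ specializations of the kernel equations), and these frame vectors span $T_{\gamma(s_0)}M$, so in fact $\dot\xi = 0$. Hence $\dot s = 0$ and $\dot\xi = 0$, so $\ker\d\pi_{\mathcal M}$ is trivial at $p_0$, proving injectivity.

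\medskip
The main obstacle I anticipate is the bookkeeping in the middle step: correctly identifying the derivative-in-$s$ terms $\dot s\langle\xi^0,\nabla_s M_j(s_0)\rangle$ together with the $\langle\dot\xi, M_j(s_0)\rangle$ terms as components of a single Wronskian $\langle I, M_j\rangle_g - \langle I', M_j\rangle_g$ for the right auxiliary Jacobi field $I$, and checking that the constancy (\ref{Wronskian}) combined with Lemma~\ref{lem_Jacobi}'s description of attainable Cauchy data genuinely forces $I$ into the trivial two-dimensional family $\{(\lambda s + \mu)\dot\gamma\}$. Everything after that is a short application of Lemma~\ref{lem_perp_lightlike} and the spacelike hypothesis, essentially mirroring the argument in part (iii) of Lemma~\ref{lem_conj}.
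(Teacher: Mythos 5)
Your approach is correct in outline and genuinely different from the paper's. The paper reparametrizes so that the base point sits at $s_0 = 0$ on the initial hypersurface $H$, then uses the implicit-function parametrization $\xi = (\xi_0(z,a,s,\xi'),\xi')$ and the explicit Cauchy data (\ref{Jj}) of the Jacobi fields at $s=0$ to exhibit a block-triangular structure in $\partial(\zeta,\alpha)/\partial(s,\xi')$: the block $\partial\zeta/\partial\xi'$ is $\Id$, $\partial\alpha/\partial\xi'$ vanishes, and $\partial\alpha_k/\partial s = ({\xi_*^0}',\partial_{a^k}\theta)_h$, which cannot vanish for all $k$ when $\xi^0$ is spacelike. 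Your proof keeps $s_0$ arbitrary and recasts the kernel condition as the vanishing, at $s_0$, of the Wronskian of an auxiliary Jacobi field $I$ against every element of $\mathcal J$; it then uses constancy of the Wronskian together with Lemma~\ref{lem_Jacobi} — an intrinsic, symplectic argument that the paper's coordinate computation keeps implicit. Two corrections to the bookkeeping you flagged: (i) since $\xi$ lives at the moving base point $\gamma_{z_0,a_0}(s(r))$, the $r$-derivative of $\zeta_j = \langle\xi, M_j(s)\rangle$ is the covariant expression $\langle\nabla_r\xi, M_j\rangle + \dot s\,\langle\xi^0, M_j'\rangle$ (the Christoffel terms cancel), so the correct Cauchy data making the kernel conditions Wronskians are $I(s_0) = -\dot s\,\xi^0_*$ and $I'(s_0) = (\nabla_r\xi)_*$; (ii) the Wronskian-orthogonal complement of $\mathcal J$ is the \emph{one}-dimensional space $\R\,\dot\gamma$, not the two-dimensional family $\{(\lambda s + \mu)\dot\gamma\}$ — the Wronskian is a nondegenerate symplectic form on the $2(n+1)$-dimensional space of all Jacobi fields, $\mathcal J$ has codimension one, and the direct computation with the Cauchy data from Lemma~\ref{lem_Jacobi} gives $I(s_0) \in \R\,\dot\gamma(s_0)$, $I'(s_0) = 0$. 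The overcount is harmless: the conclusion you actually use, that $I(s_0)$ is parallel to $\dot\gamma(s_0)$, holds either way, and once $\dot s = 0$ the kernel conditions pin down $\dot\xi = 0$ directly because $\{J(s_0) : J \in \mathcal J\} = T_{\gamma(s_0)}M$. Your route buys invariance (no reparametrization to $s=0$); the paper's buys explicitness.
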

\begin{proof}
After reparametrization, we can assume $x_0\in H$, $x_0 = (0, z_0) = (0,0)$ in the semigeodesic coordinates (\ref{coords_tz}), and $a_0 = 0$.
In particular, we can consider $x = (x^0,x')$ near $x_0$ as a point in $(-T,T) \times Z$. 
We write also $\xi^0 = (\xi_0^0, {\xi^0}')$.
Then the points $(z,a,s,\xi)$ in $C_0$ near $(0,\xi^0)$ can be parameterized by $(z,a,s,\xi') \in Z \times A \times \R \times \R^n$ by setting $\xi = (\xi_0(z,a,s,\xi'), \xi')$ where $\xi_0(z,a,s,\xi')$ is the unique solution 
to 
$\pair{\xi, \dot \gamma_{z,a}(s)} = 0$
near $\xi_0^0$.
Indeed, this follows from the implicit function theorem since $\p_{\xi_0} \pair{\xi, \dot \gamma_{0}(0)} = 1$.
 
Using the above parameterization, we write $\pi_{\mathcal M}(z,a,s,\xi') = (z,a,\zeta,\alpha)$
with $\zeta$ and $\alpha$ as in (\ref{Pi_M1}).
To show that $\d\pi_{\mathcal{M}}$ is injective at $p_0$, it is enough to show that $\partial(\zeta,\alpha)/\partial(s,\xi')$ is injective at $(0, {\xi^0}')$. Moreover, using (\ref{Jj}), we have at $(0, {\xi^0}')$,
$$
\frac{\p \zeta}{\p \xi'} 
= \begin{pmatrix} M_1^1(0) &\cdots&M_n^1(0)\\ 
 \vdots &\ddots&\vdots\\
M_n^1(0)&\cdots &M_n^n(0)
\end{pmatrix}  
= \Id.
$$
As also $\p \alpha / \p \xi'  = 0$ there, it is enough to show that $\p \alpha / \p s  \ne 0$. Using once again (\ref{Jj}), it holds at $(0, {\xi^0}')$ that
$$
\frac{\p \alpha_k}{\p s} 
= (\nabla_s \xi_*, J_k)_g + (\xi_*, \nabla_s J_k)_g
= ({\xi_*^0}', \p_{a^k} \theta)_h.
$$

To get a contradiction, suppose that $({\xi_*^0}', \p_{a^k} \theta)_h = 0$, $k=1,\dots,n-1$. 
As the vectors $\p_{a^k} \theta$, $k=1,\dots,n-1$, span the tangent space of the unit sphere $S_{z_0} Z$ at $\theta_0$, the vector ${\xi_*^0}'$
must be parallel to $\theta_0$. But then $\pair{\xi^0,\dot \gamma_0(0)} = 0$ implies that $\xi_*^0$ is parallel to $(1,\theta_0)$, a contradiction with $\xi^0$ being spacelike. 
\end{proof}

\subsection{The projection $\pi_{M}$} 
As above, we regard the projection $\pi_M$ in \r{4.2} as a map of $C$ parameterized by $(z,a,s,\xi)\in C_0$ to $T^*M$. We have
\[
\pi_{M} \big((z,\a,\zeta,\alpha) , (x, \xi )     \big) =     (x,\xi )  = ( \gamma_{z,\a}(s) , \xi),
\]
with $\xi$ conormal to $\dot \gamma_{z,\a}(s)$. It maps the $3n$ dimensional $C$ to the $2n+2$ dimensional $T^*M$.  
Moreover, $\pi_M$ is surjective in the sense that 
there are $(z,a,s) \in Z \times A \times (0,\ell)$ satisfying 
    \begin{equation}\label{piM_eqs}
x=\gamma_{z,a}(s), \quad \pair{\xi, \dot\gamma_{z,a}(s)} = 0,
    \end{equation}
assuming that $(x,\xi) \in \Sigma_s$ is close to $N^* \gamma_0$. 
Indeed, as in the Minkowski case, 
solving for $\eta_* = \dot\gamma_{z,a}(s)$ modulo rescaling in the second equation in (\ref{piM_eqs}), we obtain
a $(n-2)$-dimensional sphere of lightlike solutions  when $\xi$ is spacelike; and two distinct   vectors when $n=2$. 
Moreover, when $x$ is close to $\gamma_0(s_0)$ 
for some $s_0 \in (0,\ell)$ and $\xi$
is close to $N_{\gamma_0(s_0)}^* \gamma_0$, we can choose $\eta_*$ near $\dot \gamma_0(s_0)$. Then finding $z$ and $a$ is straightforward because $H$ is transversal to $\gamma_0$. 

It follows from \cite[Prop. 25.3.7]{Hormander4}
that the differential $\d\pi_M$ is surjective whenever $\d \pi_{\mathcal M}$ is injective. Let us, however, show this also directly for a point $p_0$ as in Lemma \ref{lem_pimathcalM_inj}. We re-parametrize again as in Lemma \ref{lem_pimathcalM_inj}.
Then at $(0, {\xi^0}')$
$$
\d\pi_M = \frac{\p (x^0, x', \xi_0, \xi')}
{\p (z,a,s,\xi')}
= \begin{pmatrix}
0 & 0 & 1 & 0
\\
\Id & 0 & \p x'/\p s & 0
\\
\p \xi_0 / \p z & \p \xi_0 / \p a & \p \xi_0 / \p s & \p \xi_0 / \p \xi'
\\
0 & 0 & 0 & \Id
\end{pmatrix},
$$
and we see that $\d\pi_M$ is surjective if and only if $\p \xi_0 / \p a \ne 0$.
It follows from $\pair{\xi, \dot \gamma_{z,a}(s)} = 0$ and (\ref{Jj}) that
$$
0 = \frac{\p \xi_0}{\p a^k} + \pair{\xi^0, \p_{a^k} \dot \gamma_0} 
= \frac{\p \xi_0}{\p a^k} +  ({\xi_*^0}', \p_{a^k} \theta)_h.
$$
We showed in Lemma \ref{lem_pimathcalM_inj} that $({\xi_*^0}', \p_{a^k} \theta)_h$ can not vanish for all $k=1,\dots,n-1$ when $\xi^0$ is spacelike. 
Thus $\d\pi_M$ is surjective in this case.

\subsection{Conclusions} 

Analogously to Lemma \ref{lem_pr_min}, we summarize the results above:

\begin{lemma}\label{lemma_pr3} 
The differential $\d\pi_\mathcal{M}$ is injective and the differential $\d \pi_M$ is surjective at $(z,a,s,\xi) \in C_0$, with $\xi$ spacelike.
The projection $\pi_{\mathcal M}$ is injective in
a neighborhood of the set of points 
$(0,0,s,\xi) \in C_0$, with $\xi$ spacelike, if and only if there are no conjugate points on $\gamma_0$. 
The projection $\pi_M$ is surjective onto a neighborhood of $\Sigma_s \cap N^* \gamma_0$ in $\Sigma_s$.
\end{lemma}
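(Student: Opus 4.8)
The plan is to read off the three assertions from the analyses already carried out in this section; the proof is then almost entirely bookkeeping, with a single small step that calls for an argument rather than a citation.

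The first assertion is immediate. Injectivity of $\d\pi_{\mathcal M}$ at any $(z,a,s,\xi)\in C_0$ with $\xi$ spacelike is exactly Lemma~\ref{lem_pimathcalM_inj}, and surjectivity of $\d\pi_M$ at such a point is the explicit matrix computation performed above in the discussion of $\pi_M$ (equivalently, it follows from Lemma~\ref{lem_pimathcalM_inj} together with \cite[Prop.~25.3.7]{Hormander4}). One need only note that these local statements are established at an arbitrary point of $C_0$ carrying a spacelike covector, not merely at the reference configuration chosen to streamline the computations.

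For the second assertion I would prove the two implications separately, using Lemmas~\ref{lem_conj} and~\ref{lem_form}. Assume first that $\gamma_0$ has no conjugate points. Since the presence of conjugate points is an open condition on the geodesic, after shrinking $Z$ and $A$ we may assume that $\gamma_{z,a}|_{(0,\ell)}$ has no conjugate points for every $(z,a)\in\overline{\mathcal M}$; this propagation is the one place where a genuine argument is needed, namely continuous dependence of Jacobi fields on the geodesic combined with compactness of $[0,\ell]$ and of $\overline{\mathcal M}$. Granting it, suppose $\pi_{\mathcal M}(z,a,s_1,\xi^1)=\pi_{\mathcal M}(z,a,s_2,\xi^2)$ with $\xi^1$ spacelike. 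Then the pairs $(s_j,\xi^j)$, $j=1,2$, solve the defining equations~(\ref{eq_inj}) for the common data $(z,a,\zeta,\alpha)$, so Lemma~\ref{lem_conj}(iii) forces $s_1=s_2$ — otherwise $\gamma_{z,a}(s_1)$ and $\gamma_{z,a}(s_2)$ would be conjugate — and then Lemma~\ref{lem_conj}(ii) gives $\xi^1=\xi^2$. Hence $\pi_{\mathcal M}$ is injective on the open set $\{(z,a,s,\xi)\in C_0\mid\xi\text{ spacelike}\}$, which is a neighborhood of $\{(0,0,s,\xi)\in C_0\mid\xi\text{ spacelike}\}$. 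Conversely, if there are $s_1\ne s_2$ in $(0,\ell)$ with $\gamma_0(s_1)$ and $\gamma_0(s_2)$ conjugate along $\gamma_0$, pick a nonzero $J'\in\mathcal J'_{s_1,s_2}$ and any $\lambda\in\R$; by Lemma~\ref{lem_form} and the observation following Lemma~\ref{lem_conj}, the covectors $\xi^j_*=J'(s_j)+\lambda\dot\gamma_0(s_j)$ are spacelike, so $(0,0,s_1,\xi^1)$ and $(0,0,s_2,\xi^2)$ are distinct points of $C_0$ lying in the reference set with the same image under $\pi_{\mathcal M}$; thus $\pi_{\mathcal M}$ cannot be injective on any neighborhood of that set.

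The third assertion is just the solvability of~(\ref{piM_eqs}) carried out above: given $(x,\xi)\in\Sigma_s$ close to $N_{\gamma_0(s_0)}^*\gamma_0$, one first solves $\pair{\xi,\dot\gamma_{z,a}(s)}=0$ for the lightlike direction $\dot\gamma_{z,a}(s)$, taking the branch near $\dot\gamma_0(s_0)$ out of the $(n-2)$-sphere of lightlike solutions (or one of the two solutions when $n=2$), and then recovers $(z,a,s)$ from $x=\gamma_{z,a}(s)$ using the transversality of $H$ to $\gamma_0$. I would simply reproduce this construction, observing that it yields a preimage for every $(x,\xi)$ in a full neighborhood of $\Sigma_s\cap N^*\gamma_0$ inside $\Sigma_s$. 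The only nontrivial point in the whole lemma is the openness/compactness step flagged above; the rest is quotation of the preceding results.
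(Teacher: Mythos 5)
Your proposal is correct and follows essentially the same route as the paper: Lemma~\ref{lemma_pr3} is presented there as a summary of the preceding analysis, and your proof simply makes that summary explicit, citing Lemma~\ref{lem_pimathcalM_inj} for $\d\pi_{\mathcal M}$, the Jacobian computation for $\d\pi_M$, Lemma~\ref{lem_conj} and Lemma~\ref{lem_form} (together with the remark following Lemma~\ref{lem_conj}) for the conjugate-point characterization of injectivity, and the solvability discussion of~(\ref{piM_eqs}) for surjectivity of $\pi_M$. The openness/compactness step you flag (propagating absence of conjugate points from $\gamma_0$ to nearby $\gamma_{z,a}$ after shrinking $Z$ and $A$) is indeed the one point the paper leaves implicit, and your treatment of it is the standard one.
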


We have also the following partial analogues of
Lemmas \ref{lem_C_min_1} and \ref{lem_C_min_2}, where write again $C = \pi_{\mathcal M} \circ \pi_M^{-1}$.

\begin{lemma}\label{lem_C_1}
For all $(x,\xi)$ in a small enough neighborhood of $\Sigma_s \cap N^* \gamma_0$ in $\Sigma_s$ it holds that $C(x,\xi)$ is the $(n-2)$-dimensional manifold given by 
$$\{(z,a,\zeta,\alpha) \in T^* \mathcal M\, |\, \text{(\ref{piM_eqs}) and (\ref{Pi_M1}) hold for some $s \in (0,\ell)$}\}.$$
\end{lemma}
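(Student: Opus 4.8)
The plan is to unwind the definition $C = \pi_{\mathcal M} \circ \pi_M^{-1}$ using the structural results already established, namely Lemma~\ref{lemma_pr3} (injectivity of $\d\pi_{\mathcal M}$ and surjectivity of $\d\pi_M$ in the spacelike cone) together with the description of $C_0$ as the $3n$-dimensional manifold parameterized by $(z,a,s,\xi)$. First I would fix $(x_0,\xi^0) \in \Sigma_s \cap N^*\gamma_0$ and recall that, by the surjectivity of $\pi_M$ near this point discussed in the previous subsection, there is at least one preimage $(z,a,s,\xi^0) \in C_0$ with $x_0 = \gamma_{z,a}(s)$ and $\pair{\xi^0,\dot\gamma_{z,a}(s)}=0$; moreover the construction there shows one may take $(z,a,s)$ close to the data coming from $\gamma_0$. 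The set $\pi_M^{-1}(x_0,\xi^0) \cap C_0$ is then, by definition, exactly the set of $(z,a,s,\xi^0)$ solving (\ref{piM_eqs}), and applying $\pi_{\mathcal M}$ to it produces precisely the set displayed in the statement, since on $C$ the dual variables $(\zeta,\alpha)$ are determined by (\ref{Pi_M1}). This identifies $C(x_0,\xi^0)$ set-theoretically with the claimed set.

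Next I would address the manifold structure and the dimension count. Because $\d\pi_M$ is surjective and $\d\pi_{\mathcal M}$ is injective at every point of $C_0$ lying over the spacelike cone (Lemma~\ref{lemma_pr3}), the fiber $\pi_M^{-1}(x_0,\xi^0)$ is a smooth submanifold of $C_0$ of dimension $\dim C_0 - \dim(T^*M) = 3n - (2n+2) = n-2$, by the constant-rank / submersion theorem; shrinking the neighborhood of $\Sigma_s \cap N^*\gamma_0$ if necessary so that $\d\pi_M$ has constant rank there. Since $\pi_{\mathcal M}$ restricted to this fiber is an immersion (its differential is injective, being the restriction of $\d\pi_{\mathcal M}$), and — this is the point where I would invoke the no-conjugate-points hypothesis implicitly carried in Lemma~\ref{lemma_pr3}, or rather the local injectivity of $\pi_{\mathcal M}$ near these points — the image $C(x_0,\xi^0)$ is itself a smooth $(n-2)$-dimensional submanifold of $T^*\mathcal M$, diffeomorphic to the fiber. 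One should note that the solutions $(z,a,s)$ of (\ref{piM_eqs}) form, for spacelike $\xi$, an $(n-2)$-sphere's worth of lightlike directions $\dot\gamma_{z,a}(s)$ (as recalled in the $\pi_M$ subsection), which is consistent with the dimension count; strictly we only need the local statement, so I would phrase it as "$C(x_0,\xi^0)$ is an $(n-2)$-dimensional manifold" without claiming it is globally a sphere.

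The main obstacle I anticipate is bookkeeping rather than conceptual: one must be careful that the neighborhood of $\Sigma_s \cap N^*\gamma_0$ can be chosen uniformly so that simultaneously (a) $\pi_M$ is a submersion onto it, (b) $\pi_{\mathcal M}$ is injective on the relevant part of $C_0$ (this uses the no-conjugate-points assumption, via the already-proven Lemma~\ref{lemma_pr3}), and (c) every $(x,\xi)$ in it actually lies in the range of $\pi_M$ restricted to directions near $\dot\gamma_0$. Each of these is an open condition established pointwise in the preceding lemmas, so the intersection of the corresponding neighborhoods does the job; the only genuine subtlety is making sure the parametrization by $\eta_* = \dot\gamma_{z,a}(s)$ modulo rescaling used in the $\pi_M$ discussion matches the coordinates on $C_0$, which is routine. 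I would close by remarking that (\ref{Pi_M1}) exhibits the map $(z,a,s) \mapsto (z,a,\zeta,\alpha)$ explicitly, so smoothness of $C(x,\xi)$ and the fact that it is cut out by the stated equations are immediate once the fiber of $\pi_M$ is understood.
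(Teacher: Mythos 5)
Your overall strategy — unwind $C = \pi_{\mathcal M}\circ\pi_M^{-1}$, identify $\pi_M^{-1}(x,\xi)$ with the solution set of (\ref{piM_eqs}), and push forward by $\pi_{\mathcal M}$ using (\ref{Pi_M1}) — is the same as the paper's. The set-theoretic identification in your first paragraph is fine, and using the submersion theorem via the surjectivity of $\d\pi_M$ (rather than the paper's direct appeal to the $(n-2)$-sphere of lightlike directions transversal to $\xi$) is a legitimate alternative way to see that the fiber of $\pi_M$ has dimension $n-2$.

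The genuine problem is your handling of why the image under $\pi_{\mathcal M}$ is a manifold. You reach for the no-conjugate-points hypothesis (or, hedging, "local injectivity of $\pi_{\mathcal M}$"), but Lemma~\ref{lem_C_1} does not assume no conjugate points, and the paper's proof does not invoke it. Global injectivity of $\pi_{\mathcal M}$ on $C_0$ is the statement that \emph{does} require that hypothesis (see the second sentence of Lemma~\ref{lemma_pr3}), so if you genuinely used it here you would be proving a weaker statement than the lemma asserts. Local injectivity of an immersion is automatic and is not by itself enough to make the image an embedded submanifold. What you actually need — and what makes the hypothesis unnecessary — is that $\pi_{\mathcal M}$ is injective \emph{on a single fiber} $\pi_M^{-1}(x,\xi)$, and this is automatic: $\pi_{\mathcal M}$ sends $(z,a,s,\xi)$ to $(z,a,\zeta,\alpha)$, so it preserves $(z,a)$, and on the fiber the parameter $s$ is already determined by $(z,a)$ together with the equation $x = \gamma_{z,a}(s)$ (locally unique because the geodesics near $\gamma_0$ do not self-intersect in $\Omega$ for small enough neighborhoods). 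Thus $\pi_{\mathcal M}$ restricted to the fiber is an injective immersion of an $(n-2)$-manifold, hence a diffeomorphism onto its image, with no conjugate-point condition. Replacing your invocation of Lemma~\ref{lemma_pr3}'s injectivity clause by this observation closes the gap and matches the paper's conclusion exactly.
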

\begin{proof}
If $((z,a,\zeta,\alpha),(x,\xi)) \in \pi_M^{-1}(x,\xi)$, then $(z,a)$ satisfies (\ref{piM_eqs}) for some $s \in (0,\ell)$. By the argument above, the solutions to this equation form a $(n-2)$-dimensional manifold. For each solution $(z,a)$, the parameter $s$ is fixed by $x = \gamma_{z,a}(s)$, and then $\zeta$ and $\alpha$ are given by (\ref{Pi_M1}).
\end{proof}

\begin{lemma}\label{lem_C_2}
Suppose that $(z,a,\zeta,\alpha) \in T^* \mathcal M$ is in the domain of $C^{-1}$ and not in the set
    \begin{align}\label{dual_L}
\mathcal L = \{(z,a,\zeta,\alpha) \in T^* \mathcal M|\; \zeta_*||\theta(z,a),\ \alpha = 0\}.
    \end{align}
Suppose, furthermore, that there are no conjugate points on $\gamma_0$. 
Then $C^{-1}(z,a,\zeta,\alpha) = (\gamma_{z,a}(s), \xi)$ where $(s,\xi)$ is the unique solution of (\ref{eq_inj}).
\end{lemma}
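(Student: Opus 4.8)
The plan is to prove existence and uniqueness of the solution to (\ref{eq_inj}) and then read off the value of $C^{-1}$. Existence is automatic: since $(z,a,\zeta,\alpha)$ lies in the domain of $C^{-1}=\pi_M\circ\pi_{\mathcal M}^{-1}$, there is by definition a point of $C$ above it, and via the diffeomorphism of $C$ with $C_0$ this says precisely that there is a pair $(s,\xi)$ solving (\ref{eq_inj}), with the corresponding point of $T^*M$ being $(\gamma_{z,a}(s),\xi)$. Once such a pair is shown to be unique, the same diffeomorphism forces $C^{-1}(z,a,\zeta,\alpha)$ to be the single point $(\gamma_{z,a}(s),\xi)$, which is the assertion. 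I would first record that any solution $(s,\xi)$ has $\xi$ spacelike: the constraint $\pair{\xi,\dot\gamma_{z,a}(s)}=0$ leaves only the lightlike or spacelike possibilities, and if $\xi$ were lightlike then the equivalence (i)$\Leftrightarrow$(iii) in Lemma~\ref{lem_lightlike} would put $(z,a,\zeta,\alpha)$ into $\mathcal L$, contrary to hypothesis.

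For uniqueness, suppose $(s_1,\xi^1)$ and $(s_2,\xi^2)$ both solve (\ref{eq_inj}); by the previous remark both $\xi^j$ are spacelike. If $s_1=s_2$, then Lemma~\ref{lem_conj}(ii) gives $\xi^1=\xi^2$ directly. If $s_1\neq s_2$, then Lemma~\ref{lem_conj}(iii) produces $J'\in\mathcal J'_{s_1,s_2}$ and $\lambda\in\R$ with $\xi^j_*=J'(s_j)+\lambda\dot\gamma(s_j)$, and, the $\xi^j$ being spacelike, $J'\neq0$. A nonzero element of $\mathcal J'_{s_1,s_2}$ is the covariant derivative of a nonzero Jacobi field $J\in\mathcal J$ with $J(s_1)=J(s_2)=0$ (here one uses that the only multiples of $\dot\gamma$ in $\mathcal J$ are of the form $(as+b)\dot\gamma$, which vanish at most once), so $\gamma_{z,a}(s_1)$ and $\gamma_{z,a}(s_2)$ are conjugate along $\gamma_{z,a}$. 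Thus two distinct solutions of (\ref{eq_inj}) can exist only if $\gamma_{z,a}$ carries a pair of conjugate points.

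It remains to rule this out, and here the hypothesis that $\gamma_0$ has no conjugate points enters; this is the step requiring the most care, and I expect it to be the main obstacle. The point is that $\gamma_0=\gamma_{0,0}$ and the segment $[0,\ell]$ are fixed and compact, while conjugacy along $\gamma_{z,a}$ is detected by the degeneracy of a matrix assembled from the Jacobi fields (\ref{MJ}), which depends continuously on $(z,a)$; hence the absence of conjugate points persists under small perturbations of $(z,a)$, and after shrinking $\mathcal M=Z\times A$ (as the setup explicitly permits) we may assume that no $\gamma_{z,a}$ with $(z,a)\in\overline{\mathcal M}$ has conjugate points on $[0,\ell]$. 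Equivalently, one may invoke the injectivity of $\pi_{\mathcal M}$ on the spacelike part of $C_0$ recorded in Lemma~\ref{lemma_pr3}. With this in hand, $\mathcal J'_{s_1,s_2}=\{0\}$ for all $s_1\neq s_2$, the case $s_1\neq s_2$ above cannot occur, and (\ref{eq_inj}) has a unique solution $(s,\xi)$; since moreover $\xi$ is spacelike, $C^{-1}$ maps the complement of $\mathcal L$ in its domain into $\Sigma_s$, and $C^{-1}(z,a,\zeta,\alpha)=(\gamma_{z,a}(s),\xi)$ as claimed. The algebraic content (Lemmas~\ref{lem_lightlike} and~\ref{lem_conj}) supplies everything else with no further computation.
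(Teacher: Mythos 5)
Your proof is correct and follows the same route as the paper's: Lemma~\ref{lem_lightlike} forces $\xi$ to be spacelike, Lemma~\ref{lem_conj} reduces non-uniqueness of the solution to (\ref{eq_inj}) to the presence of conjugate points on $\gamma_{z,a}$, and the description (\ref{NZa}) of $C$ identifies the image point as $(\gamma_{z,a}(s),\xi)$. The one place you go beyond the paper is in explicitly confronting the mismatch between the hypothesis (no conjugate points on $\gamma_0$) and what the uniqueness argument actually needs (no conjugate points on $\gamma_{z,a}$); the paper's three-line proof passes over this, implicitly relying on the freedom to shrink $\mathcal M$ built into the setup and on the ``in a neighborhood'' phrasing in Lemma~\ref{lemma_pr3}. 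Your compactness-and-continuity argument for the persistence of the no-conjugate-point condition under small perturbations of $(z,a)$, together with the observation that a Jacobi field proportional to $\dot\gamma$ cannot vanish twice, correctly fills this implicit step; so the proposal is a more careful version of the same argument rather than a different one.
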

\begin{proof}
If $((z,a,\zeta,\alpha),(x,\xi)) \in \pi_{\mathcal M}^{-1}(z,a,\zeta,\alpha)$, then $\xi$ is spacelike by Lemma \ref{lem_lightlike}.
It follows from Lemma \ref{lem_conj} that (\ref{eq_inj}) has a unique solution $(s,\xi)$. Finally $x = \gamma_{z,a}(s)$ by (\ref{NZa}). 
\end{proof}

The analogue of Lemma~\ref{lem_CC_comp_min} reads:

\begin{lemma}\label{lem_CC_comp}
Suppose that there are no conjugate points on $\gamma_0$. 
For all $(x,\xi)$ in a small enough neighborhood of $\Sigma_s \cap N^* \gamma_0$ in $\Sigma_s$ it holds that $(C^{-1} \circ C)(x,\xi) = (x,\xi)$.
\end{lemma}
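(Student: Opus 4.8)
The plan is to reduce the claim to the injectivity of $\d\pi_{\mathcal M}$ and of $\pi_{\mathcal M}$ that was established in the preceding lemmas, exactly as in the Minkowski proof of Lemma~\ref{lem_CC_comp_min}, but being careful about the two issues specific to the general Lorentzian setting: that we now work only on small neighborhoods, and that $\pi_{\mathcal M}$ is only injective in the absence of conjugate points. First I would fix $(x,\xi) \in \Sigma_s$ in a small enough neighborhood of $\Sigma_s \cap N^*\gamma_0$ so that all of Lemma~\ref{lem_C_1}, Lemma~\ref{lem_C_2}, Lemma~\ref{lem_conj}, and Lemma~\ref{lemma_pr3} apply; in particular $C(x,\xi)$ is the $(n-2)$-dimensional manifold described in Lemma~\ref{lem_C_1}, consisting of the $(z,a,\zeta,\alpha)$ for which \r{piM_eqs} and \r{Pi_M1} hold for some $s\in(0,\ell)$.

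Next I would observe that each such $(z,a,\zeta,\alpha)$ lies in the domain of $C^{-1}$ but not in the bad set $\mathcal L$ of \r{dual_L}: indeed, by Lemma~\ref{lem_lightlike}, membership in $\mathcal L$ would force $\xi$ to be lightlike, contradicting $(x,\xi)\in\Sigma_s$. Hence Lemma~\ref{lem_C_2} applies and gives $C^{-1}(z,a,\zeta,\alpha) = (\gamma_{z,a}(s'),\xi')$, where $(s',\xi')$ is the \emph{unique} solution of \r{eq_inj} for the data $(z,a,\zeta,\alpha)$. But $(s,\xi)$ (with $x = \gamma_{z,a}(s)$) is by construction \emph{a} solution of \r{eq_inj} for that same data, so uniqueness forces $(s',\xi') = (s,\xi)$ and therefore $C^{-1}(z,a,\zeta,\alpha) = (x,\xi)$. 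Since this holds for every element of the fiber $C(x,\xi)$, we conclude $(C^{-1}\circ C)(x,\xi) = (x,\xi)$.

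The one point that needs care — and which I expect to be the main obstacle — is legitimizing the uniqueness statement invoked from Lemma~\ref{lem_C_2}, i.e.\ making sure we are in the regime where $\pi_{\mathcal M}$ is genuinely injective. The hypothesis that there are no conjugate points on $\gamma_0$ is exactly what Lemma~\ref{lemma_pr3} needs for $\pi_{\mathcal M}$ to be injective \emph{near} the spacelike points of $C_0$ lying over $\gamma_0$; by Lemma~\ref{lem_conj}(iii), a failure of injectivity over a spacelike covector would produce a nonzero $J'\in\mathcal J'_{s_1,s_2}$, i.e.\ a conjugate pair $\gamma_{z,a}(s_1),\gamma_{z,a}(s_2)$, which a compactness/openness argument rules out on geodesics $\gamma_{z,a}$ sufficiently close to $\gamma_0$ once it is ruled out on $\gamma_0$ itself. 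So the real content is to shrink the neighborhood of $\Sigma_s\cap N^*\gamma_0$ enough that no geodesic $\gamma_{z,a}$ entering the picture has conjugate points on $[0,\ell]$; with that fixed, the uniqueness in Lemma~\ref{lem_C_2} is available and the argument above closes. Alternatively, one may bypass the lemma statements and argue directly, as promised in the text, from Lemma~\ref{lemma_pr3}: $\pi_M^{-1}(x,\xi)$ is a submanifold on which $\pi_{\mathcal M}$ is an injective immersion, and $C^{-1}\circ C$ is by definition $\pi_M\circ\pi_{\mathcal M}^{-1}\circ\pi_{\mathcal M}\circ\pi_M^{-1}$, which collapses to the identity precisely because $\pi_{\mathcal M}$ separates the points of each fiber of $\pi_M$.
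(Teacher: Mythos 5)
Your proof is correct. Your main argument imports the Minkowski-style proof (through Lemmas~\ref{lem_C_1} and~\ref{lem_C_2}, mirroring the proof of Lemma~\ref{lem_CC_comp_min}), whereas the paper's actual proof of Lemma~\ref{lem_CC_comp} is exactly the two-line argument you sketch at the end: by Lemma~\ref{lemma_pr3} the projection $\pi_{\mathcal M}$ is injective near the non-empty fiber $\pi_M^{-1}(x,\xi)$, so $C^{-1}\circ C = \pi_M\circ\pi_{\mathcal M}^{-1}\circ\pi_{\mathcal M}\circ\pi_M^{-1}$ collapses to $\pi_M\circ\pi_M^{-1} = \mathrm{id}$ on the relevant set. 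This is in fact deliberate in the paper: the Minkowski statement Lemma~\ref{lem_CC_comp_min} is proved by the longer route via Lemmas~\ref{lem_C_min_1} and~\ref{lem_C_min_2}, while the Lorentzian Lemma~\ref{lem_CC_comp} is proved by the short projection argument, precisely to illustrate both. What the short route buys is that it makes transparent that the only geometric input needed is injectivity of $\pi_{\mathcal M}$ over the fiber, with the no-conjugate-points assumption entering only through Lemma~\ref{lemma_pr3}; your longer route makes explicit the Jacobi-field mechanism (Lemma~\ref{lem_conj}) by which conjugate points would destroy uniqueness, which is more informative but also reproves part of Lemma~\ref{lemma_pr3} along the way.

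On the point you flag as the main obstacle: Lemma~\ref{lem_C_2} as stated already asserts the uniqueness of the solution of \r{eq_inj}, so you need not re-derive it. You are right that both your argument and the paper's implicitly use that absence of conjugate points on $\gamma_0$ propagates, after shrinking $Z\times A$, to all nearby $\gamma_{z,a}$ on $[0,\ell]$; this openness is standard (conjugate points are detected by the vanishing of a continuous determinant along a compact parameter interval), and the paper leaves it implicit in the phrase ``small enough neighborhood.'' Your explicit acknowledgement of it is a fair and correct reading of the hypotheses.
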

\begin{proof}
By Lemma~\ref{lemma_pr3}, the projection $\pi_{\mathcal M}$ is injective near the non-empty set $\pi_M^{-1}(x,\xi)$.
Therefore 
$
(C^{-1} \circ C)(x,\xi) = (\pi_M \circ \pi_M^{-1})(x,\xi) = (x,\xi).
$
\end{proof}

For a set $\mathcal V \subset T^*M \setminus 0$ we denote by $\mathcal V_c$ the conical set generated by $\mathcal V$, that is,
$$
\mathcal V_c = \{(x,\lambda\xi) \in T^*M \setminus 0 |\; (x,\xi) \in \mathcal V,\ \lambda > 0\}.
$$
Similarly to Theorem~\ref{th_rec_min}, we have:

\begin{theorem} \label{th_rec}
Suppose that there are no conjugate points on $\gamma_0$. Then 
there is a zeroth order pseudodifferential operator $\chi$ on $Z \times A$
 such that $L_\kappa' \chi L_\kappa$ is a pseudodifferential operator of order $-1$ 
with essential support in the spacelike cone.

Suppose, moreover, that $\kappa$ is nowhere vanishing. Then for any $(x_0,\xi^0) \in \Sigma_s \cap N^* \gamma_0$ 
the operator $\chi$ can be chosen so that $L_\kappa' \chi L_\kappa$ is elliptic at $(x_0,\xi^0)$.
\end{theorem}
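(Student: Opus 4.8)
The plan is to follow the strategy of Theorem~\ref{th_rec_min} and Section~\ref{sec_clean_intersection}, replacing the explicit Minkowski computations by the structural results of Lemmas~\ref{lem_lightlike}--\ref{lem_CC_comp}. Fix $(x_0,\xi^0)\in\Sigma_s\cap N^*\gamma_0$, write $x_0=\gamma_0(s_0)$, and let $p_0=((z_0,a_0,\zeta^0,\alpha^0),(x_0,\xi^0))\in C$ be the point of the canonical relation for which $\gamma_{z_0,a_0}=\gamma_0$ (so $(z_0,a_0)=(0,0)$ in the coordinates (\ref{coords_tz})--(\ref{coords_za}) and $(\zeta^0,\alpha^0)$ are the Jacobi-field projections (\ref{Pi_M1}) along $\gamma_0$ at $s_0$). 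Since $\xi^0$ is spacelike, Lemma~\ref{lem_lightlike} gives $(z_0,a_0,\zeta^0,\alpha^0)\notin\mathcal L$, with $\mathcal L$ as in (\ref{dual_L}); as $\mathcal L$ is closed and conic, there is a conic neighborhood $\mathcal U$ of $(z_0,a_0,\zeta^0,\alpha^0)$ in $T^*\mathcal M\setminus 0$ with $\mathcal U\cap\mathcal L=\emptyset$, which we may also take small enough that Lemmas~\ref{lem_C_1}, \ref{lem_C_2}, \ref{lemma_pr3} and \ref{lem_CC_comp} apply over $\pi_M(\pi_{\mathcal M}^{-1}(\mathcal U))$. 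I would then take $\chi=\chi(z,a,D_z,D_a)$ to be a zeroth order \PDO\ on $Z\times A$ with symbol supported in $\mathcal U$, non-negative, and positive at $(z_0,a_0,\zeta^0,\alpha^0)$.

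The first reduction is exactly as in the proof of Theorem~\ref{th_rec_min}: composing the twisted wave-front relation of $\chi$ (the diagonal restricted to $\mathrm{ess\text{-}sup}(\chi)\subset\mathcal U$) with $C$ confines $\WF'$ of the kernel of $\chi L_\kappa$ to covectors $((z,a,\zeta,\alpha),(x,\xi))\in C$ with $(z,a,\zeta,\alpha)\in\mathcal U$; since $\mathcal U$ avoids $\mathcal L$, Lemma~\ref{lem_lightlike} forces $\xi$ to be spacelike there. Hence $\chi L_\kappa$, and with it $L_\kappa'\chi L_\kappa$, is microlocally smoothing in a conic neighborhood of $\Sigma_l\cup\Sigma_t$, which already yields the assertion that the essential support of $L_\kappa'\chi L_\kappa$ lies in the spacelike cone.

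Next I would run the clean intersection calculus \cite[Th.~25.2.3]{Hormander4} with $A_1=L_\kappa'$, $A_2=\chi L_\kappa$ and canonical relations $C^{-1}$, $C$, adapting Section~\ref{sec_clean_intersection} essentially verbatim. The cutoff $\chi$ confines the analysis to $\Sigma_s$ near $N^*\gamma_0$, where $C^{-1}\circ C$ is the identity by Lemma~\ref{lem_CC_comp}; the local cleanness condition (\ref{clean_intersection_Tp}) follows, as in Section~\ref{sec_clean_intersection}, from the injectivity of $\d\pi_{\mathcal M}$ on the spacelike cone (Lemma~\ref{lem_pimathcalM_inj}); and the smoothing property near $\Sigma_l\cup\Sigma_t$ supplies the global statements (that $C^{-1}\circ C$ is an embedded conic Lagrangian, closed as a subset, in a neighborhood of $\mathrm{ess\text{-}sup}(A_1A_2)$). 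The excess equals $\dim C(x_0,\xi^0)=n-2$ by Lemma~\ref{lem_C_1}, and together with $m=-n/4$ this gives $2m+e/2=-1$, so $L_\kappa'\chi L_\kappa$ is a \PDO\ of order $-1$; this proves the first part of the theorem. For the second part, assume in addition that $\kappa$ vanishes nowhere. The principal symbol $\sigma[L_\kappa'\chi L_\kappa](x_0,\xi^0)$ is obtained from $\sigma[L_\kappa']=\kappa$, $\sigma[\chi]$ and $\sigma[L_\kappa]=\kappa$ by integration over the fiber $C(x_0,\xi^0)\cong S^{n-2}$ against a positive density, see \cite[Theorem~25.2.3]{Hormander3}. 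Since $(z_0,a_0,\zeta^0,\alpha^0)\in C(x_0,\xi^0)$, the integrand is non-negative and strictly positive there, so the integral does not vanish and $L_\kappa'\chi L_\kappa$ is elliptic at $(x_0,\xi^0)$.

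The main obstacle is the justification of the clean intersection calculus in the presence of the singularity of $C$ near the lightlike cone: as in Section~\ref{sec_clean_intersection} this is what forces the cutoff $\chi$ and a separate local/global discussion rather than a bare application of Egorov's theorem. Concretely, the real work is to check that the argument of Section~\ref{sec_clean_intersection} transfers once the hypothesis ``no conjugate points on $\gamma_0$'' is invoked to upgrade injectivity of $\d\pi_{\mathcal M}$ (Lemma~\ref{lem_pimathcalM_inj}) to injectivity of $\pi_{\mathcal M}$ itself near $N^*\gamma_0$ (Lemma~\ref{lemma_pr3}), so that $C^{-1}\circ C$ is a genuine embedded identity near $\mathrm{ess\text{-}sup}(A_1A_2)$ and not a relation carrying extra sheets from conjugate points, cf.\ Lemma~\ref{lem_conj}.
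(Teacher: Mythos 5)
Your proposal is essentially the same strategy as the paper's proof, with one technical step omitted. Both arguments run through the same chain: identify $(z_0,a_0,\zeta^0,\alpha^0)\in C(x_0,\xi^0)$ via (\ref{Pi_M1}), use Lemma~\ref{lem_lightlike} to place it off the lightlike locus $\mathcal L$, choose $\chi$ non-negative with small essential support away from $\mathcal L$ and $\chi>0$ at that point, verify cleanness of $C^{-1}\circ C$ over $\Sigma_s$ via the no-conjugate-points hypothesis (Lemmas~\ref{lem_C_2}, \ref{lemma_pr3}, \ref{lem_CC_comp}, \ref{lem_pimathcalM_inj}), compute $2m+e/2 = -1$ with $e=n-2$ from Lemma~\ref{lem_C_1}, and conclude ellipticity from positivity of the fiber integral of $\sigma[\chi]$.

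Where the proposal is thinner than the paper is the step ``hence $\chi L_\kappa$ is microlocally smoothing in a conic neighborhood of $\Sigma_l\cup\Sigma_t$.'' You take $\mathcal U$ conic disjoint from $\mathcal L$ and jump directly from ``$\xi$ is spacelike on $\WF'$ of the kernel'' to conic separation from $\Sigma_l$ on the $M$-side. The paper instead chooses $\overline{\mathcal U}$ compact with $\overline{\mathcal U}\cap\mathcal L=\emptyset$, then proves that $\pi_{\mathcal M}^{-1}(\overline{\mathcal U})$ is \emph{bounded} in the fiber variable (via the map $F$ built from the Jacobi-field pairings, homogeneous of degree one and vanishing only at $\xi=0$ by Lemma~\ref{lem_Jacobi}), so that a genuine conical neighborhood $W$ of $\pi_M^{-1}(\Sigma_l)$ can be found avoiding $\pi_{\mathcal M}^{-1}(\overline{\mathcal U})$; it then introduces a second cutoff $\tilde\chi$ on the $M$-side, essentially supported in $\pi_M(W)$ and equal to $1$ near $\Sigma_l$, and replaces $\chi L_\kappa$ by $\chi L_\kappa(1-\tilde\chi)$ modulo smoothing. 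This double-sided microlocalization is what rigorously confines the analysis to $\Sigma_s$ before invoking the clean intersection calculus. In the Minkowski case this step was implicit because $\zeta=\xi$ gives the conical bound $|\tau|\le(1-\epsilon)|\xi|$ for free; in the Lorentzian case the relation between $(\zeta,\alpha)$ and $\xi$ is via Jacobi fields and the separation is not automatic. Your conic-$\mathcal U$ version can be made to work by appealing to compactness of the relevant unit-sphere bundle over $\overline{\mathcal M}\times[\delta,\ell-\delta]$, but as written this is asserted rather than proved, and the $F$-map boundedness argument (or an equivalent) is the missing ingredient.
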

\begin{proof}
Let $(x_0,\xi^0) \in T^*\Omega \cap \Sigma_s \cap N^* \gamma_0$
and let $s_0 \in (0,\ell)$ satisfy $x = \gamma_0(s_0)$.
Writing $z_0 = 0$ and $a_0 = 0$ we have $(z_0, a_0, s_0, \xi^0) \in C_0$. We define also $\zeta^0 = \zeta$ and $\alpha^0 = \alpha$ 
where $\zeta$ and $\alpha$ are given by (\ref{Pi_M1}) with $\xi = \xi_0$, $s = s_0$, $z = z_0$ and $a = a_0$.

Lemma \ref{lem_lightlike} implies that $(z_0,a_0,\zeta^0,\alpha^0)$ is outside the set 
$\mathcal L$ defined by (\ref{dual_L}).
We choose a neighborhood $\mathcal U \subset T^* \mathcal M$ of $(z_0,a_0,\zeta^0,\alpha^0)$ such that $\overline{\mathcal U}$ is compact and $\overline{\mathcal U} \cap \mathcal L = \emptyset$.
Moreover, we choose $\chi$ so that $\chi = 1$ near $(z_0,a_0,\zeta^0,\alpha^0)$ and so that it is essentially supported in $\mathcal U_c$.

The closed set $\pi_M^{-1}(\Sigma_l)$ is disjoint from the closed set
$\pi_{\mathcal M}^{-1}(\overline{\mathcal U})$ by Lemma \ref{lem_lightlike}.
We will show next that there is a conical neighborhood $W$ of $\pi_M^{-1}(\Sigma_l)$ such that $W \cap \pi_{\mathcal M}^{-1}(\overline{\mathcal U}) = \emptyset$.
It is enough to show that $\pi_{\mathcal M}^{-1}(\overline{\mathcal U})$ is bounded. This boils down showing that there is $C > 0$ such that all $(z,a,s,\xi) \in \pi_{\mathcal M}^{-1}(\overline{\mathcal U})$ satisfy $|\xi| \le C$. Consider the map $F$ taking $(z,a,s,\xi)$ to the point in $\R^{2n}$ with the coordinates
$$
\pair{\xi, M_1(z,a,s)},\dots,\pair{\xi, M_n(z,a,s)},\pair{\xi, J_1(z,a,s)},\dots,\pair{\xi, J_{n-1}(z,a,s)},\pair{\xi,\dot \gamma_{z,a}(s)}.
$$
Clearly $F$ is homogeneous of degree one in $\xi$, and by (\ref{NZa}),
$$
F(\pi_{\mathcal M}^{-1}(\overline{\mathcal U})) = \{ (\zeta,\alpha,0)\,|\, \text{$(z,a,\zeta,\alpha) \in \overline{\mathcal U}$ for some $(z,a)$}\}.
$$
But this set is bounded due to $\overline{\mathcal U}$ being compact. Therefore also $\pi_{\mathcal M}^{-1}(\overline{\mathcal U})$ is bounded.

As $\d \pi_M$ is surjective, $\pi_M$ is an open map and $\pi_M(W)$ is a neighborhood of $\Sigma_l$, considered as a subset of the range $\pi_M(C_0)$. We may choose a pseudodifferential operator $\tilde \chi$ so that $\tilde \chi = 1$ near $\Sigma_l$ and that is essentially supported in $\pi_M(W)$.
Then $\chi L_\kappa (1-\tilde \chi) = \chi L_\kappa$ modulo a smoothing operator. Moreover, $L_\kappa (1-\tilde \chi)$ is smoothing on $\Sigma_l$.

We can now apply the clean intersection calculus: the proof that (\ref{clean_intersection_Tp}) holds for (\ref{splike_cover})
is in verbatim the same as in the Minkowski case, except that we invoke Lemma \ref{lem_C_2} instead of Lemma \ref{lem_C_min_2}. Also $C^{-1} \circ C$ has the same global structure. Furthermore, the order is computed as in the Minkowski case, except that Lemma \ref{lem_C_1} is used instead of Lemma \ref{lem_C_min_1}.

For the claimed ellipticity, we choose $\chi$ so that $\sigma[\chi]$ is non-negative.
Note that the point $(z_0, a_0, \zeta^0, \alpha^0)$ is on the fiber $C(x_0,\xi^0)$. As $\chi = 1$ near $(z_0,a_0,\zeta^0,\alpha^0)$, the integral of $\sigma[\chi]$ does not vanish over the fiber $C(x_0,\xi^0)$. The ellipticity follows again from \cite[Theorem~25.2.3]{Hormander3}.
\end{proof}

Examples of metrics which do not allow conjugate points along lightlike geodesics include the Minkowski metric, product type of metrics $-\d t^2+g(x)$ with $g$ having no conjugate points, the Friedmann-Lema\^{\i}tre-Robertson-Walker (FLRW) metric $-\d t^2+a^2(t)\d x^2$ with $a>0$, and in particular the Einstein-de Sitter metric corresponding to $a(t)=t^{2/3}$; as well as metric conformal to them and small perturbations of all those examples  on compact manifolds. Of course, any Lorentizan metric is free of conjugate points on small enough subset of $M$. We refer to \cite{LOSU-strings} for the conformal invariance of this problem: the FLRW metric can be transformed into $a^2(s)(-\d s^2+\d x^2)$ after a  change of variables $s=s(t)$ solving $\d s/\d t=a^{-1}(t)$. Next two metrics conformal to each other have the same lightlike geodesics as smooth curves, but possibly parameterized differently, which does not change the property of existence or not of conjugate points. 
Going back to the original parameterization would multiply the weight $\kappa$ by a smooth non-vanishing factor, which would not change our conclusions.

\section{Cancellation of singularities in two dimensions} \label{sec_2D}
{\ntext Non-detectability and invisibility results have been extensively studied for inverse problems, 
see \cite{GKLU1,GKLU2,GLU1,GLU2} and references therein.}
For the Riemannian geodesic ray transform, it was shown in \cite{MonardSU14}, see also \cite{HMS-attenuated},  that in presence of conjugate points, singularities cannot be resolved locally, at least, i.e., knowing the ray transform near a single (directed) geodesic. We will prove an analogous result in the Lorentzian case in $1+2$ dimensions. 

We will review some of the results in section~\ref{sec_Lorentz} emphasizing on the specifics for the $n=2$ case. The point-geodesic relation $X$, see \r{Z}, is $4$-dimensional, and all manifolds in the diagram \r{4.2} (valid in the variable curvature case as well) are $6$ dimensional. The projection $\pi_\mathcal{M}$ is a local diffeomorphism in a neighborhood of a point $(\gamma_0,\hat\gamma^0,x_0,\xi^0)\in C$  with $(x_0,\xi^0)$ spacelike (here, $\hat\gamma$ is a dual variable to $\gamma = (z,a)$),   if and only if there are no points on $\gamma_0$ conjugate to $x_0$. The projection $\pi_M$ is also a local diffeomorphism under the same non-conjugacy condition. 
As a result, the canonical relation $C=  \pi_{\mathcal M}\circ \pi_M^{-1}$ is a local diffeomorphism from $\Sigma_s$ to its image. The composition as in Theorem~\ref{th_rec_min} then follows without the need to invoke the clean intersection calculus.  

We take a closer look at the geometry of the conjugate points when $n=2$. Two points along a geodesic are conjugate when there exists a non-zero Jacobi field vanishing at those points. This property is invariant under rescaling and shifting of the parameter $s$ of $\gamma(s)$, so we can take $s_1=0$. 
 A basis for $\mathcal{J}_{0}$ (the Jacobi fields vanishing at $0$, see \r{def_J0}), in local coordinates, is given by $J_1$, see \r{MJ}, and $s\dot \gamma(s)$. Since the second one does not vanish at $s\not=0$, a conjugate point could be at most at of order $1$, that is, the Jacobi fields $J$ with $J(0)=J(1)=0$ form an one-dimensional linear space, also true pointwise. One the other hand, at any point $\gamma(s)$, the conormal bundle to $\gamma$ is two-dimensional; and this is true for its restriction to the spacelike cone as well.

\begin{proposition}  

$C(x,\xi)=C(y,\eta)$ if and only of there is a lightlike geodesic joining  $x$ and $y$, that is, $\gamma(0)=x$, $\gamma(1)=y$, so that 

(a) $x$ and $y$ are conjugate to each other on $\gamma$,

(b) $\xi_*=J'(0)+\lambda \dot\gamma(0)$, $\eta_*=J'(1)+\lambda \dot\gamma(1)$ with some $\lambda\in \R$, where $J$ is a Jacobi field with  $J(0)=J(1)=0$. 
\end{proposition}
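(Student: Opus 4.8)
The plan is to deduce both implications directly from Lemmas~\ref{lem_conj} and \ref{lem_form}, which were proved in the general Lorentzian setting and apply verbatim when $n=2$. Throughout, $(x,\xi)$ and $(y,\eta)$ are spacelike covectors lying close to $N^*\gamma_0$, so that, by the $n=2$ discussion above, $C=\pi_{\mathcal M}\circ\pi_M^{-1}$ is a single-valued local diffeomorphism near the relevant points; hence $C(x,\xi)=C(y,\eta)$ means precisely that the two images in $T^*\mathcal M$ coincide, and it suffices to exhibit, respectively to characterize, a common value.

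For the forward implication, assume $C(x,\xi)=C(y,\eta)=:(z,a,\zeta,\alpha)$. Unwinding \r{NZa}, there are $s_1,s_2\in(0,\ell)$ with $x=\gamma_{z,a}(s_1)$, $y=\gamma_{z,a}(s_2)$ such that $(s_1,\xi)$ and $(s_2,\eta)$ both solve \r{eq_inj} for this $(z,a,\zeta,\alpha)$; write $\gamma=\gamma_{z,a}$. If $s_1=s_2$, then Lemma~\ref{lem_conj}(ii) forces $\xi=\eta$, hence $(x,\xi)=(y,\eta)$, which we may discard as trivial. If $s_1\neq s_2$, Lemma~\ref{lem_conj}(iii) yields a unique $J'\in\mathcal{J}'_{s_1,s_2}$ and $\lambda\in\R$ with $\xi_*=J'(s_1)+\lambda\dot\gamma(s_1)$ and $\eta_*=J'(s_2)+\lambda\dot\gamma(s_2)$, and, since $\xi$ is spacelike, $J'\neq0$. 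Hence the underlying Jacobi field $J\in\mathcal{J}_{s_1}\cap\mathcal{J}_{s_2}$, which satisfies $J(s_1)=J(s_2)=0$, is not identically zero; this is exactly the assertion that $\gamma(s_1)=x$ and $\gamma(s_2)=y$ are conjugate along $\gamma$. An affine reparametrization of $\gamma$ normalizing $s_1\mapsto0$, $s_2\mapsto1$ preserves conjugacy, and rescaling $J$ and $\lambda$ by the constant $(s_2-s_1)^{-1}$ (which does not affect $J(0)=J(1)=0$) turns the two identities into exactly (b). This proves (a) and (b).

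For the converse, let $\gamma$ be a lightlike geodesic with $\gamma(0)=x$, $\gamma(1)=y$, satisfying (a) and (b). Using if needed the harmless change of hypersurface $H$ from Section~\ref{sec_Lorentz}, realize $\gamma$ as one of the geodesics $\gamma_{z,a}$, so that $x=\gamma_{z,a}(s_1)$ and $y=\gamma_{z,a}(s_2)$ with $s_1\neq s_2$; after the corresponding affine rescaling the Jacobi field $J$ from (b), which vanishes at $s_1$ and $s_2$, satisfies $J'\in\mathcal{J}'_{s_1,s_2}$. Lemma~\ref{lem_form}, applied with this $J'$ and the given $\lambda$, shows that $\xi$ (with $\xi_*=J'(s_1)+\lambda\dot\gamma(s_1)$) and $\eta$ (with $\eta_*=J'(s_2)+\lambda\dot\gamma(s_2)$) satisfy the matching relations \r{ZA_non_inj}, together with $\langle\xi,\dot\gamma(s_1)\rangle=\langle\eta,\dot\gamma(s_2)\rangle=0$. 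Therefore the $\mathcal M$-data $(\zeta,\alpha)$ obtained from $(s_1,\xi)$ via \r{Pi_M1} coincides with that obtained from $(s_2,\eta)$, so $(z,a,\zeta,\alpha)\in C(x,\xi)\cap C(y,\eta)$; since $C$ is single-valued near these points, $C(x,\xi)=C(y,\eta)$.

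The substantive content is entirely carried by Lemmas~\ref{lem_conj} and \ref{lem_form}: the identification of an admissible pair of spacelike covectors with the Cauchy data of a Jacobi field along the connecting null geodesic, and the equivalence of nonvanishing of that Jacobi field with conjugacy. The main subtlety, such as it is, is bookkeeping: carrying the affine reparametrization through (a) and (b), and staying inside the chart where the $n=2$ analysis guarantees that $\pi_M$ is a local diffeomorphism, so that $C(x,\xi)$ may legitimately be treated as a single point.
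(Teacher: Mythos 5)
Your proof is correct and follows essentially the same route as the paper, which merely states that "the proposition follows from Lemma~\ref{lem_conj}'' and appeals to Lemma~\ref{lem_form} (in the paragraph preceding the proposition) for the converse; you have filled in the bookkeeping details — the affine rescaling of the parameter and the corresponding rescaling of $J$ and $\lambda$, which the paper acknowledges implicitly with its remark that "the Jacobi field $J$ in the lemma is scaled so that the proposition holds'' — cleanly and without introducing any gaps.
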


The proposition follows from Lemma~\ref{lem_conj}. Note that the proposition is consistent with the observation that at every point of $\gamma$, its conormal bundle is two-dimensional: the Jacobi field $J$ in the lemma is scaled so that the proposition holds, and $\lambda$ is responsible for the second dimension.

Assume that $\gamma_0 : [0,\ell] \to M$ is a lightlike geodesic with endpoints outside $\Omega$, where $f\in \mathcal{E}'(\Omega)$. Assume that $x_1:= \gamma_0(s_1)$ and $x_2:=\gamma_0(s_2)$ are conjugate along $\gamma_0$. 
Let $V_j$, $j=1,2$ be conic sets in $T^*M$ defined as the covectors $(\gamma(s),\xi)$ for $s$ close to $s_j$ and $\xi$ any spacelike covectors at $\gamma(s)$. Then $C$ is a diffeomorphism from $V_j$ to its image if $V_j$ is small enough. We can choose $V_j$ so that $C(V_1)=C(V_2)=:\mathcal{V}\subset T^*\mathcal M$ and so that $V_j$ projected to the base $M$ is a neighborhood of $x_j$. Set $C_j=C|_{V_j}$, $j=1,2$ and define $C_{21}:= C_2^{-1}\circ C_1: V_1\to V_2$. Then $C_{21}$ is the canonical relation of $L_2^{-1}L_1$ where $L_j$ is  $L$ microlocalized to $V_j$, $j=1,2$.

Assume that $f=f_1+f_2$ with $f_j$ supported near $x_j$ but away from the endpoints of $\gamma_j$, $j=1,2$. 

\begin{theorem}\label{thm cancellation}
Suppose that $\kappa$ does not vanish near $x_1$ and $x_2$.
Let $f_j\in \mathcal{E}'(\Omega)$ with $\WF(f_j)\subset V_j$ with $V_j$ as above and small enough, $j=1,2$. Then 
\[
L(f_1+f_2)\in H^s(\mathcal{V})
\]
if and only if 
\[
f_2+L_2^{-1}L_1f_1\in H^{s-1/2}(V_1),
\]
where the inverses are microlocal parametrices. 
\end{theorem}

The proof is immediate given the properties of $L_j$ above which make $L_j$ elliptic FIOs of order $-1/2$ with diffeomorphic canonical relations.  The significance of the theorem is that given $f_1$ with spacelike singularities near $x_2$ in a neighborhood of the conormal bundle to $\gamma$ at $x_1$,  one can also construct $f_2$ singular near $x_2$ so that $L(f_1+f_2)$ is smooth. This statement is symmetric w.r.t.\ $s_1$ and $s_2$, of course. Therefore, the singularity  in the light ray transform  that is produced by $f_1$ is cancelled by the singularity produced by $f_2$.
On a manifold that contains many conjugate points, Theorem \ref{thm cancellation} can be considered as a cloaking result for the singularities. 
For instance, on a Lorentzian manifold $(M,g)$, that is conformal to the product space $(\R\times S^n,-dt^2+g_{{}_{S^n}})$,  any space-like element $((t_1,y_1),(\tau_1,\eta_1))\in \hbox{WF}(f_1)$ can be cancelled by a function $f_2$ that is supported near a point $(t_2,y_2)$,
where $t_2=t_1+ (2\pi+1) m,$ $m\in \mathbb Z$, and $y_2$ is an antipodal point to $y_1$. 
Also, observe that the function $f_2$ that hides an element of the wave front set of $f_1$ can be supported either in the future or in the past of the support of function  $f_1$. This has similar spirit to results on
cloaking for the Helmholtz equation by anomalous localized resonance \cite{MN}
and the active cloaking results  \cite{Guevara}, where scattered field produced by an object is cancelled by a metamaterial object or an active source that located near the object.

Theorem \ref{thm cancellation} also describes the microlocal kernel of $L$ in $V_1\cup V_2$.

\section{Applications} \label{sec_applications}
We discuss two application we already mentioned in the introduction. 

\subsection{A time dependent potential} Let $\Box_g$ be the wave operator related to a Lorentzian metric $g$ on $M$ with a timelike boundary $\bo$: 
\be{A1}
\Box_g u= |\det g|^{-\frac12}\partial_{x^i}({\ntext |\det g|^{\frac12}}g^{ij} \partial_{x^j}u).
\ee
One can introduce a magnetic field $A$ as in \cite{St-Yang-DN} but to keep it simple, we assume that $A=0$. 
We assume that there is a smooth real valued function $t$ so that its level sets are compact and spacelike. Set $M_{ab}=\{a\le t\le b\}$ with some $a<b$. By \cite{Hormander3}, see also  \cite{St-Yang-DN}, the following problem is well posed
\be{A2}
(\Box_g+q)u=0 \ \ \text{in $M$}, \quad u|_{t<a}=0, \quad u|_{\bo}=f,
\ee
where $q$ is a smooth potential in $M$ and $f\in H^s(\bo)$, $s\ge1$ is a given function with $f=0$ for $t<0$. We have $u\in H^s(M)$ and the DN map $ \Lambda:  f\mapsto \partial_\nu u|_{\bo}$ is well defined, where $\partial_\nu$ is the conormal derivative. As shown in \cite{St-Yang-DN}, the second term in the singular expansion of $\Lambda$ recovers algorithmically and stably the light ray transform $Lq$ of $q$. By our results, in absence of conjugate points along lightlike geodesics, one can recover (stably) the spacelike singularities of $q$. We can interpret those as the singularities moving slower that light.

A special case is to assume that $M$ is the cylinder $M=[a,b]\times M'$ where $M'$ is compact with a boundary and $g=-\d t^2+g'(x)$, where $g_0$ is a (time independent) Riemannian metric on $M'$. Assume that $q=q(t,x)$, $x\in M$. Then the future pointing lighlike geodesics in $M$ are given by $(t,\gamma'(t))$, up to a reparameteriation, where $\gamma'$ are unit speed geodesics in $M$. Then $\Lambda$ recovers 
\[
Lq = \int q(t,\gamma'(t))\,\d t
\]
for various $\gamma'$. This transform has been studied in 
\cite{Feizmohammadi2019}.
An even more special case is to assume that $g'$ is Euclidean. This leads us to the Minkowski light ray transform studied in  section~\ref{sec_M}. This problem  was considered in \cite{MR1004174}.

\subsection{Time dependent speed} Assume again that $M$ is the cylinder $M=[a,b]\times M'$ where $M'$ is compact with a boundary and $g=-\d t^2+g'(t,x)$, with  $g'$ a Riemannian metric on $M'$ depending smoothly on the time variable. Here and below, we follow the same notational convention as above --- primes denote projections onto the last $n$ components of the $1+n$ dimensional vectors of covectors. 
Locally, every Lorentzian metric can be put in this form,.
As shown in \cite{St-Yang-DN}, the  Dirichlet-to-Neumann map $\Lambda$ for the wave operator $\square_g+q$ is an FIO of order zero away from the diagonal,
with the canonical relation equal to the lens relation $\mathcal{L}$ associated with $g$. In particular, we recover $\mathcal{L}$. The linearization of $\mathcal{L}$ near a fixed $g'$ is a light ray transform but it involves derivatives of the perturbation  $\delta g'$, see, for example, \cite{SUV_localrigidity} for the time independent case. Instead of linearizing $\mathcal{L}$, we will linearize the travel times between boundary points, defined locally as we explain below. 

Let $(t_1,x_1)$ and $(t_2,x_2)$ be the endpoints of a lightlike geodesic $\gamma_0(t)$ in $M$, transversal to the boundary at both ends, with $t_1<t_2$ and $x_1\in\bo_1$, $x_2\in\bo_1$. 
We parameterize the lightlike geodesics $\gamma(s)$ near $\gamma_0$ by initial points $(t_1,x)$ on $t=t_1$ (here, $x$ plays the role of $z$ before), and we require $\dot \gamma(0)=(1,\theta)$ where $\gamma'(0)=\theta$ must be unit in the metric $g'(t_1,\cdot)$. 
Assume now  that  $(t_1,x_1)$ and $(t_2,x_2)$ are not conjugate along $\gamma_0$. Fix $(t_1,x_1)$ and denote temporarily the geodesics issued from this point in the direction $(1,\theta)$ by $\gamma(s,\theta)$.

As before, we can parameterize $\theta$ locally by $a\in \R^{n-1}$. By the non-conjugate assumption, the differential $\d\gamma(s,\theta(a))/\d (s,a)$ is injective at $(s,a) =(0,a_0)$, where $a_0$ corresponds to $\theta_0:=\dot\gamma_0'(0)$. By Lemma~\ref{lemma_J} its range is $\dot\gamma(t_2)^\perp$,  which is also the tangent space  at $\gamma(s,\theta(a))$ of the lighlike cone (or flowout) with vertex $\gamma(0,\theta(a_0))$. The  projection of  $\dot\gamma(t_2)^\perp$ to its last  $n$ variables, i.e., to the tangent space spanned by $ \partial_{x^1},\dots,\partial_{x^n}$, is in direction transversal to $\dot\gamma(t_2)^\perp$,  since $\partial/\partial t$   does not belong to the latter; hence it is also a hyperplane of dimension $n$.   
Therefore,  $\d\gamma'(s,\theta(a)))/\d (s,a)$ is an invertible $n\times n$ Jacobian, and the map $(s,\theta)\mapsto \gamma_0'(s,\theta)$ is a local diffeomorphism, smoothly depending on $(t_1,x_1)$. In particular, given $x$ close to $x_2$, one can define the local travel time $\tau(t_1,x_1,x)$ from $(t_1,x_1)$ to $x$ by setting $x=\gamma_0'(s,\theta)$, solving for $s$ and $\theta$ and then plugging them into the zeroth component of $\gamma_0(s,\theta)$. Restricting $x$ to $\bo'$, we get the travel times $\tau(t,x,y)$ (since we can vary $(t_1,x_1)$ as well) for $(t,x,y)$ close to $(t_1,x_1,x_2)$, satisfying  $\tau(t_1,x_1,x_2)=t_2$. Note that we defined $\tau$ using geodesics close to $\gamma_0$ only. In the applied literature, those times are also called arrival times since they correspond to times a wave produced by a point source at $(t,x)$ arrives at $y$. 

Assume now that we have a fixed background $g'_0$ which is stationary, i.e.,   $g'_0=g'_0(x)$ and $g_0:=-\d t^2+g_0'(x)$. Then the future pointing light geodesics for $g_0$, parameterized as above, take the form $\gamma(s)=(s,\gamma'(s))$, where $\gamma'$ are unit speed geodesics in the metric $g'_0$. The non-conjugacy assumption we made is equivalent to $x_1$ and $x_2$ not being conjugate along $\gamma_0'$. Then $\tau(t,x_1,y)=\tau'(x_1,y)+t$, where $\tau'$ is the localized (Riemannian) travel time defined similarly to the one above, see also \cite{SU-lens}.

We want to linearize the travel times $\tau$ for a family of metrics $g_\epsilon = -\d t^2+g_\epsilon'(t,x)$ near $g_0$. 
We write $\gamma(s,\epsilon)$ for the null geodesics associated to $g_\epsilon$
and use an analogous notation for the local travel times. 
Let $\gamma(s,\eps)$ be a smooth  variation of $\gamma_0(s)$ with $|\eps|\ll1$ so that $\gamma(s,0)=\gamma_0(s)$; with the same endpoints for all $s$ in the following sense: 
\be{A2'}
\gamma(0,\eps)=(t_1,x_1)\quad \text{and}\quad \gamma(t_2-t_1,\eps)= (\tau(t_1,x_1,x_2,\eps),x_2)
\ee
with $\tau(t_1,x_1,x_2,0)= \tau_0(t_1,x_1,x_2)$, where $\tau_0$ corresponds to $g_0$. 
The second identity in \r{A2'} says {\ntext that $\gamma'(s)=x_2$} for $s=t_2-t_1$ (rather for some $\epsilon$--dependent $s$). This can always be achieved by parameterizing $\gamma(\cdot,\eps)$ appropriately, depending on $\eps$.

Set 
\begin{eqnarray}\label{A3}
& &E_0(\eps):= \int_{0}^{t_2-t_1}(g_0)_{ij}(\gamma(s,\eps)) \dot \gamma^i(s,\eps)\dot \gamma^j(s,\eps)\, \d s,\\ 
& &E(\eps):= \int_{0}^{t_2-t_1}(g_\eps)_{ij}(\gamma(s,\eps)) \dot \gamma^i(s,\eps)\dot \gamma^j(s,\eps)\, \d s, \nonumber
\end{eqnarray}
where the integrands are written in local coordinates. Then writing $D_\eps \d_t= D_t\d_\eps$, with $D_\eps$ and $D_s$ being covariant derivatives with respect to metric $g_0$, and integrating by parts, we get 
\be{A4}
\begin{split}
 E_0'(0) &=  2\int_{0}^{t_2-t_1}g_0(D_\eps|_{\eps=0} \dot \gamma(s,\eps), \dot \gamma(s,0)) \, \d s\\
&=  2\int_{0}^{t_2-t_1}g_0(D_s \d_\eps|_{\eps=0} \gamma(s,\eps), \dot \gamma_0(s)) \, \d s\\
&=  2\int_{0}^{t_2-t_1}\d_s g_0(\d_\eps|_{\eps=0} \gamma(s,\eps), \dot \gamma_0(s)) \, \d s\\
& =-2 \d_\eps|_{\eps=0}\tau(t_1,x_1,x_2,\eps).
\end{split}
\ee
The last equality follows by differentiating the second identity in \r{A2'} w.r.t.\ $\eps$ at $\eps=0$.

 As curves $\gamma(s,\eps)$  are null-geodesics with respect the metric
  $g_\epsilon$, we see that $E(\eps)=0$. Writing {\ntext $g_\epsilon'(t,x) = g_0'(x) + \epsilon h(t,x)$ and} using the calculation in \eqref{A4}, we get
  \be{A4 mod}
\begin{split}
 0&=E'(0) =  2\int_{0}^{t_2-t_1}g_0(D_\eps|_{\eps=0} \dot \gamma(s,\eps), \dot \gamma(s,0)) \, \d s + 
 \d_\eps\int_0^{t_2-t_1}g_\epsilon(\dot \gamma_0(s),\dot\gamma_0(s))\, \d s\bigg|_{\eps=0} \\
&=  -2 \d_\eps|_{\eps=0}\tau(t_1,x_1,x_2,\eps)+ \int_0^{t_2-t_1}h(\dot \gamma_0(s),\dot\gamma_0(s))\, \d s.
\end{split}
\ee
This  yields
  \be{A4 mod2}
\frac{\d}{\d \eps}\Big|_{\eps=0}\tau(t_1,x_1,x_2,\eps) = \frac12 \int_0^{t_2-t_1}h(\dot \gamma_0(s),\dot\gamma_0(s))\, \d s.
\ee
Therefore, the linearization of the travel times, up to the constant factor $1/2$ is the tensorial lightlike transform written in local coordinates in the form
\[
L^{(2)} h(\gamma) = \int h_{ij}(\gamma(s))\dot\gamma^i(s) \dot\gamma^j(s)\,\d s,
\]
where, in this particular application, the symmetric tensor $h$ satisfies $h_{0j}=0$. Recall that $\gamma$ runs over null geodesics for the metric $g_0=-\d t^2+g_0'(x)$ between points of $[a,b]\times \partial M'$.  In particular, if $g_0'(t,x)=c_0^{-2}(x)\d x^2$, and {\ntext 
\[g_\epsilon=-dt^2+\frac 1{(c_0(x)+(\delta c)(t,x))^2}\d x^2,\]
is a the Lorentzian metric corresponding to the perturbed time-dependent speed $c_0(x)+(\delta c)(t,x)$, then in linearization, we get the scalar light ray transform $Lf$, see \r{L_local},  of $$f(t,x):=-2c_0^{-3}(x)\delta c(t,x).$$ The problem of recovering the perturbation
of the wave speed $\delta c$  is encountered in the ultrasound imaging methods in medical imaging.
When $\delta c=\delta c(x)$ is independent of time, the waves that travel through the medium and collect information along geodesics $\gamma$ of $g_0'(x)$ are used in Transmission Ultrasound Tomography. This imaging modality has been used since the pioneering study of J. Greenleaf \cite{JGreenleaf} on 1980's. 
The case when the perturbation of the wave speed  $\delta c(x,t)$ depends on  time
is studied in Doppler ultrasound tomography, see \cite{Oglat,Jensenbook} and references there in.
The methods developed in this paper could be applicable in transmission ultrasound imaging of moving
tissues and organs, e.g.\ in the analogous imaging tasks where the backscattering measurements are presently used in Doppler echocardiography, where  the Doppler ultrasound tomography
is used to examine the heart \cite{Ommen}.}



\end{document}